%% LyX 2.3.6.1 created this file.  For more info, see http://www.lyx.org/.
%% Do not edit unless you really know what you are doing.
\documentclass[oneside,english]{amsart}
\usepackage{ae,aecompl}
\usepackage[T1]{fontenc}
\usepackage[latin9]{inputenc}
\usepackage{geometry}
\geometry{verbose,tmargin=3cm,bmargin=3cm,lmargin=3cm,rmargin=3cm}
\setlength{\parskip}{\smallskipamount}
\setlength{\parindent}{0pt}
\usepackage{babel}
\usepackage{amsbsy}
\usepackage{amstext}
\usepackage{amsthm}
\usepackage{amssymb}
\usepackage{setspace}
\usepackage[authoryear]{natbib}
\onehalfspacing
\usepackage[unicode=true,pdfusetitle,
 bookmarks=true,bookmarksnumbered=false,bookmarksopen=false,
 breaklinks=false,pdfborder={0 0 1},backref=false,colorlinks=false]
 {hyperref}

\makeatletter
%%%%%%%%%%%%%%%%%%%%%%%%%%%%%% Textclass specific LaTeX commands.
\numberwithin{equation}{section}
\numberwithin{figure}{section}
\theoremstyle{plain}
\newtheorem{thm}{\protect\theoremname}
\theoremstyle{definition}
\newtheorem{defn}[thm]{\protect\definitionname}
\theoremstyle{plain}
\newtheorem{prop}[thm]{\protect\propositionname}
\newenvironment{lyxlist}[1]
	{\begin{list}{}
		{\settowidth{\labelwidth}{#1}
		 \setlength{\leftmargin}{\labelwidth}
		 \addtolength{\leftmargin}{\labelsep}
		 }}
	{\end{list}}
\theoremstyle{plain}
\newtheorem{lem}[thm]{\protect\lemmaname}
\theoremstyle{plain}
\newtheorem{fact}[thm]{\protect\factname}
\theoremstyle{definition}
\newtheorem{problem}[thm]{\protect\problemname}

%%%%%%%%%%%%%%%%%%%%%%%%%%%%%% User specified LaTeX commands.
\usepackage[noload]{qtree}
\usepackage{amsfonts}
\usepackage{bussproofs}
\usepackage{pdflscape}

\usepackage{tikz}
\usepackage{tikz-cd}
\usetikzlibrary{matrix,arrows,decorations.pathmorphing,arrows.meta}
\usetikzlibrary{positioning}

\usetikzlibrary{shapes.geometric}

\makeatother

\providecommand{\definitionname}{Definition}
\providecommand{\factname}{Fact}
\providecommand{\lemmaname}{Lemma}
\providecommand{\problemname}{Problem}
\providecommand{\propositionname}{Proposition}
\providecommand{\theoremname}{Theorem}

\begin{document}
\title{Found in Translation\\
\begin{small}
At the limits of the Hudetz program
\end{small}}
\author{Toby Meadows}
\thanks{I'd like to thank Jim Weatherall for all his support in the development
of this project. I'm grateful to the LPS logic seminar for letting
me speak for a three hour (!) session on this material. Thanks are
also due to the Society for Exact Philosophy for allowing me to present
a condensed version of this material in an exhilarating forty-five
minute talk. I'm also very thankful to the graduate students I convinced
to attend a long talk on nascent chunks of this material in the Summer
of 2024. I'd particularly like to thank Laurenz Hudetz for letting
me use his name in the title and for generating many of the ideas
that permeate this paper. I'm also grateful to Bokai Yao for his patient
and well-argued defense of $ZFCU$ over $ZFCA$.}
\begin{abstract}
This paper aims to provide an analysis of what it means when we say
that a pair of theories, very generously construed, are equivalent
in the sense that they are interdefinable. With regard to theories
articulated in first order logic, we already have a natural and well-understood
device for addressing this problem: the theory of relative interpretability
as based on translation. However, many important theories in the sciences
and mathematics (and, in particular, physics) are precisely formulated
but are not naturally articulated in first order logic or any obvious
language at all. In this paper, we plan to generalize the ordinary
theory of interpretation to accommodate such theories by offering
an account where definability does not mean definability relative
to a particular structure, but rather definability without such reservations:
definable in the language of mathematics.
\end{abstract}

\maketitle
\newpage{}

\tableofcontents{}

\newpage{}

Our goal in this article is to find a foothold in a hopelessly general
question:
\begin{quote}
What could we mean when we say that two mathematical objects, or theories
thereof, are interdefinable?
\end{quote}
Textbooks are riddled with such statements. Sometimes the underlying
idea is made precise by saying that, for example, the relevant objects
can compute, construct or interpret each other.\footnote{For classics in each case respectively, see \citep{RogRecT}, \citep{DevlinC}
and \citep{Visser2006}.} But more often that one might like, we find a statement supported
by some kind of mathematical argument, but without any framework in
which to situate the claim. The reader is just supposed to understand.
And quite often, when such an equivalence holds, the argument and
its conclusion do appear to be quite easy to understand. But what
about the inequivalences? What does it mean to say two mathematical
objects \emph{aren't }interdefinable? These questions are more difficult
to answer and they place pressure on us to be more precise in our
formalization of these ideas. Our goal in this paper is to hazard
a very general answer to these questions. They will be the recurring
theme of our discussion. 

Despite the generality of our topic, our entry point into these questions
will come from a specific set of problems in the philosophy of physics.
I anticipate, nonetheless, that the proposed framework poses an interesting,
and perhaps novel, type of problem that will be of interest to mathematical
logicians and mathematicians more broadly. To motivate matters, let's
begin in some well-trodden and stable ground. If we want to compare
two mathematical theories, it is natural to ask whether one theory
can be translated into another; and whether anything is lost in the
process. If those theories are articulated in first order logic, then
the theory of relative interpretation provides compelling criteria
for interdefinability \citep{Visser2006,VisserFriedBitoSyn}. If we
can translate from one theory into the other and back again; and if
we end up saying the same thing, then it seems like nothing has been
lost. Moreover, we might argue that these theories just give us alternative
ways to say the same thing. This is the informal idea behind what
is known as \emph{definitional} \emph{equivalence}. But in this scenario,
observe the crucial roles played by first order logic and translation.
It is not so clear how to do the latter without the former.

In general, theories in physics are not articulated in first order
logic: broadly speaking, physical theories tend to be collections
of mathematical structures. For example, we might think of the theory
of general relativity as being naturally represented by a collection
of mathematical structures that are spacetimes understood as manifolds
endowed with a particular kind of metric. Like a model in first order
logic, these structures are based on a domain with some kind of structure
placed upon it. As with first order logic, the structure consists
of things like functions and relations. But unlike first order logic,
there is no thought that these functions and relations have finite
arity. There is no requirement that they be subsets of products of
the domain. We do not use them to inductively define a set of formulae.
As such, there is no thought that the collection of structures representing
the theory can be isolated by describing a set of sentences that is
true in exactly those structures. We're not in Kansas anymore.

Let us make things a little more concrete with a couple of examples
to illustrate. First, let's note that even when part of a physical
theory can be articulated in first order logic, the intended structure
will often be beyond its expressive range. For example, a theory in
physics will generally need to use the real numbers, and it is possible
to offer a serviceable theory of analysis using first order logic.
Nonetheless in physics, the intended interpretation of that language
is unique up to isomorphism. In contrast the corresponding first order
theory, we do not intend that models with nonstandard numbers and
domains of the wrong cardinality to be included in the structures
instantiating our physical theories. We might say that theories in
physics are defined analytically, from the ``outside,'' rather than
synthetically, from the ``inside.''\footnote{See the remarks at the beginning of Chapter 2 in \citep{hottbook}
for some interesting discussion of this distinction.} As we shall see later, this problem doesn't present an insurmountable
hurdle for translation and interpretation. However, our second problem
is more difficult. If we want to talk about manifolds, then we need
to talk about topology. Leaving aside the question of whether there
is a first order theory of topology, it is not even clear that there
is a first order language in which we could articulate it. In particular,
when we say that a topology on some set is closed under arbitrary
unions, it could appear that the union function -- unlike a function
in first order logic, or indeed, English -- is a function that takes
infinitely many arguments. We'll revisit this case soon enough and
eventually offer a satisfying response to this problem. But for now,
I just want to push the following general point: there is a wide class
of things that we have good reason to call ``theories,'' but which
do not appear to be amenable to articulation in first order logic.
As such, translation seems to be off the menu.

This raises an interesting question: if linguistic tools are unavailable,
how can physical theories be compared? When can we say that they are
equivalent? This is an important question in physics and quite recently
a promising answer has emerged from category theory \citep{WeathWhyNotCats}.
We've already mentioned that theories in physics tend to be understood
as collections of mathematical structures. Recalling that that a category
consists of objects and arrows, we may form a \emph{theory category
}by letting those mathematical structures be the objects and letting
order-preserving maps between those structures be the arrows. To assess
whether two theories are equivalent, we then ask whether their associated
theory categories are isomorphic or equivalent, which roughly means
that they possess the same arrow structure. Such equivalences are
then witnessed by functors that take us back and forth from those
categories and return us to exactly where we started, or at least,
very close to where we started. This move into category theory allows
us to ignore the problem of finding a language to describe mathematical
structures. Rather than using interpretations to define a new model
inside an existing one, we replace them with functors taking structures
to structures. Indeed, this replacement is quite a natural one. For
example, definitional equivalence can be seen as a special case of
isomorphism between theory categories where the required functors
are straightforwardly generated by interpretations.\footnote{See \citep{meadowsBLI} for further discussion of this relationship. } 

Category theory, thus, gives us a way of comparing theories and structures
in physics that avoids the language problem and yet, is quite closely
related to the mathematics and logic of interpretation. However, a
great deal hangs on the arrows we employ in these categories and the
functors we use to relate them. With regard to the arrows, there are
frequently many different order-preserving maps available; for just
a few examples, we might consider homomorphisms, isomorphisms, embeddings,
homeomorphisms or homotopies. Different choices of arrow will change
the structure of the theory category and this can cause an equivalence
to turn into an inequivalence.\footnote{See \citep{BarrettEAI} for an intriguing investigation of this in
relation to classical mechanics.} What makes one kind of arrow the right choice in a theory category?
There is no counterpart to this question in the linguistic realm of
interpretation. With regard to functors, the ability to use arbitrary
functors rather than those determined by interpretations makes it
easier to obtain equivalence results, but it also makes it difficult
to mount philosophical arguments regarding their significance. To
illustrate this note that in contrast, when two theories in first
order logic are found to be definitionally equivalent, we have a simple
story about translation to tell. Anything you can say in one language
has an equivalent in the other as witnessed by the fact that translating
back and forth plausibly preserves the meanings of expressions. It
is a generalization of that moment in a definition-debate where you
realize that you and your interlocutor are arguing past each other.
It is not at all obvious that this story generalizes to the world
of functors and theory categories. As such, while this category theoretic
equivalences provide a significant step forward with these problems,
their philosophical significance is on shakier ground than their translational
counterparts. Without some notion of inter-definability, it is not
so clear what categorical equivalence between theories is telling
us.

In the context of physics, this seems to leave us with an unpleasant
dilemma. If we restrict our attention to theories in first order logic,
we get a straightforward story about equivalence based on interdefinability,
but we aren't able to naturally accommodate many physical theories.
If we move beyond first order logic to class of structures, then physical
theories are easily dealt with, but the philosophical story about
their equivalence becomes more difficult. Indeed, this problem has
been observed before and forms the basis of a distinction between
physical theories promoted by David Wallace \citeyearpar{WallaceMath1st}.
On the one hand, we have the \emph{language-first }approach in which,
roughly, theories are sets of sentences that can be compared using
translation. On the other, we have the \emph{mathematics-first} approach
whereby theories are collections of mathematical structures that are
perhaps best compared using category theory.\footnote{I should say that Wallace does not commit himself to the use of category
theory, although it is -- as we've seen -- a promising approach
to that problem.} Being a philosopher of physics, Wallace is rightfully concerned with
being faithful to his subject matter and its practitioners. As such,
he favors the \emph{mathematics-first }approach to theories in physics,
with regard to which he makes the following intriguing remark: 
\begin{quote}
\emph{This is a conception of theories not as collections of sentences,
but as collections of mathematical models. Of course, I used language
to describe those models to you. (How else could we have communicated?
I'm not telepathic.) \citep{WallaceMath1st}}
\end{quote}
Setting aside Wallace's apparent ineligibility to join the X-men,
something very interesting is being said here. Even when we treat
physical theories as collections of mathematical structures, we still
describe those structures in a language. This observation provides
the fundamental hint for the framework proposed in this paper and
a path around our unpleasant dilemma. We are going to develop an account
that allows us to compare collections of mathematical structures using
translation and interdefinability in the language that we use to describe
those structures. And what language is this? The language of mathematics
itself. 

We now risk opening up another very general question about the foundations
of mathematics, so let's nip this one in the bud. We take it that
a set theory based on $ZFC$ provides a general foundation for mathematics
that will be suitable for the purposes of this paper.\footnote{See \citep{MaddySTF} and \citep{Maddy2019} for some defense of this
claim.} In particular, it is sufficient to prove the existence of the mathematical
structures that we are interested in and perform the manipulations
of those structures that we require. Nonetheless, I think it is very
likely that other foundational systems can be used to furnish us with
frameworks analogous to that delivered in the paper. Indeed, this
is an appropriate moment to acknowledge that Laurenz \citet{Hudetz2017}
was here before us and did exactly that. Noting that ``working scientists
do not restrict themselves to first-order constructions,'' he developed
a framework for theory comparison based on type theory and the Bourbaki's
notion of an echelon. Like the system developed in this paper, it
is not restricted to first order logic. We have opted for a set theoretic
approach for a couple of reasons. First, we think it offers a simpler
framework that is easier to use and closer the practice of working
mathematicians. This will be a controversial point that we do not
aim to unpack or defend. However, the sense in which we make this
claim can be explained by putting forward one of our primary goals.
Our framework should be a silent partner. When we show that classic
equivalences fit into our framework, we shall see that the informal
textbook proof is the same as ours. So even if a mathematician doesn't
know our framework, we can still understand them as working within
it. Like $ZFC$ itself, our framework is intended to be compatible
with mathematical practice and furthermore a reasonable model of it.
Second, while Hudetz's type theoretic framework delivers a notion
of definability that is much more powerful than first order logic,
it is still circumscribed within that background mathematics of its
users.\footnote{We'll discuss this further later, but the easiest way to see this
point is to observe that a model-theoretic semantics for the type
theory is given in \citep{Hudetz2017}.} This is essentially because the type theory is not used as a model
of our background foundational mathematics, but rather formulated
within that milieu. In contrast, our goal is to take Hudetz's idea
to its natural limit: hence, the subtitle of this paper. Our goal
is to develop a framework that models definability as that which can
be defined using the entirety of our background mathematics. 

The paper is divided into three main parts. In the first part, Section
\ref{sec:1}, we set the scene by considering some naive approaches
to our problem. While we'll quickly see that they do not work, the
hurdles we encounter will inform our proposed solution and make clearer
the shape of the problem we are trying to address. In the second part,
Section \ref{sec:V*}, we provide the technical exposition of our
proposed solution, which we call the $\mathcal{V}^{*}$-framework.
While this is the shortest part of the paper, it contains a lot of
technical details that may take a little while to fully digest. Nonetheless,
the final outcome of this work will be generalized characterizations
of definitional equivalence and bi-interpretability that will look
very familiar to students of interpretability. The final part, Section
\ref{sec:Applications}, takes the framework out for a test drive.
We consider some elementary equivalence and inequivalence results
across a variety of theories and then conclude with a discussion of
the limitation of our framework.

The reader will have noted by now that this is not a short paper.
As such, it only seems fair to provide a little guidance on how it
might be read. First of all, while long, I think (or hope) this is
a relatively smooth paper to read. Most of the proofs are easy or
well-known by the folk. As such, I've omitted many proofs of obvious
facts and aimed to include proofs only when I think they are part
of the story.\footnote{And I've tried to provide relatively comprehensive footnotes pointing
to resources that might help the less experienced reader.} By this, I mean those proofs that illustrate an important technique
in the underlying framework. These will often be proofs of relatively
easy claims, but I have tried to curate a collection of such claims
that illustrate the basic toolkit that will be required for addressing
more complex problems. So even though it's long, I think this paper
can be read left-to-right with a reasonable expectation on the part
of the reader that they will not get stuck. Nonetheless, life is short.
For the more experienced reader with less time or patience, I think
it would be quite reasonable to jump ahead to Section \ref{subsec:Definitional-equivalence-and}
after reading this paragraph. There, they will find the core definitions
of the paper. They might then backtrack a little into Section \ref{sec:V*}
in order to fill in some of the gaps. Then they might dive into the
applications offered in Section \ref{sec:Applications} to see how
the framework plays out in practice. This may still leave the reader
wondering about some of the idiosyncrasies of the $\mathcal{V}^{*}$-framework
are the way they are. At that moment, such a reader might return to
Section \ref{sec:1} to better understand the problems that motivated
the definitions of Section \ref{subsec:Definitional-equivalence-and}.

\section{Things that don't work well\label{sec:1}}

Our first goal is to develop a better understanding of what it would
mean to produce a good account of the background mathematical language
we use to described mathematical structures and classes thereof. To
achieve this, I'll begin negatively, by discussing some simple ideas
that work quite badly. This will give us a clearer idea of the nature
and magnitude of our problem since the failures of these naive proposals
will highlight some criteria that a successful proposal ought to satisfy.

\subsection{Straw equivalence}

So let's dive in and try something. Suppose we are modeling our mathematical
practice in $ZFC$ and we ask ourselves the question: when are two
theories -- understood as collections of mathematical structures
-- interdefinable? In order to do anything with this question, we
need to first answer another one. What do we mean by a collection
of mathematical structures? Given that we are working in set theory,
the obvious answer would be: some kind of set. However for well-known
reasons, this turns out to be too restrictive. For example, even in
the simple context of first order logic there is no set of all groups.
There is, however, a definable (proper) class of groups. The fact
that such a class is definable is also important. Following Wallace's
hint, our goal is to model the background language of mathematics
and the way in which we talk about theories. If this is to make sense,
there must be some some way to define those structures. With this
in mind, let's say -- for now -- that a theory is a class that is
definable without the use of parameters. This last bit is important,
since a parameter, like a real number, might be used to smuggle in
infinitary information that we are unable to communicate. Unpacking
things we see that since a theory is a collection of mathematical
structures, a mathematical structure will simply be a set that is
a member of the associated class. There are many reasons to be unsatisfied
with this characterization, but they are not relevant to our first
hurdle.

In order to say when two such theories are intuitively equivalent,
we want to say that there is some sense in which the structures contained
within them are interdefinable. A natural way of doing this occurs
with definitional equivalence between first order theories. We recall
how this works. Let $T$ and $S$ be theories articulated in $\mathcal{L}_{T}$
and $\mathcal{L}_{S}$ respectively; and let $mod(T)$ and $mod(S)$
be the classes of models that satisfy $T$ and $S$. Suppose then
that there are functions $t:mod(T)\to mod(S)$ and $s:mod(S)\to mod(T)$
determined by translations between their languages. By this, we mean
that every article of $\mathcal{L}_{S}$'s vocabulary has a translation
into $\mathcal{L}_{T}$. This allows us to define an $\mathcal{L}_{S}$
model within any model of $T$.\footnote{See Theorem 3.2 in \citep{meadowsBLI} for a description of how to
turn an translation into such a function. Essentially, we just use
the translation of items of vocabulary of $\mathcal{L}_{S}$ into
$\mathcal{L}_{T}$ in order to define a model of $S$ within a model
of $T$.} Let us abuse notation and write $t:mod(T)\leftrightarrow mod(S):s$
for this situation. This means that $T$ and $S$ are mutually interpretable.
We then say that $T$ and $S$ are \emph{definitionally} \emph{equivalent}
if:\footnote{See \citep{Visser2006,VisserFriedBitoSyn} or \citep{meadowsBLI}
for more detailed definitions and discussion. See \citep{LefeverDefEqNonDisjLang}
for alternative characterizations of definitional equivalence and
some traps for young players.}
\begin{enumerate}
\item $s\circ t(\mathcal{A})=\mathcal{A}$ for all models $\mathcal{A}\models T$;
and
\item $t\circ s(\mathcal{B})=\mathcal{B}$ for all models $\mathcal{B}\models S$.
\end{enumerate}
Informally speaking, this tells us the following. Within any model
$\mathcal{A}$ of $T$ we may define a model of $S$ and within any
model $\mathcal{B}$ of $S$ we may define a model of $T$. Moreover,
if we consider the model $t(\mathcal{A})$ of $S$ defined within
$\mathcal{A}$ and then the model $s\circ t(\mathcal{A})$ of $T$
within it, we get exactly the same model. Similarly, when we start
with a model $\mathcal{B}$ of $S$. Thus, we might say that $T$
and $S$ are \emph{interdefinable} since the back-and-forth process
of defining one model in another takes back to exactly where we started.

How might we apply this idea in our vastly generalized setting? In
the context of first order logic, we used definability relative to
a particular model in order to define a new structure. But in our
generalized context, we do not want our notion of definability to
be relativized, unless it is vacuously relativized to the entire universe.
With this in mind, we might modify our characterization of definitional
equivalence to give the following (terrible) definition. 
\begin{defn}
Say that two theories $T$ and $S$ (construed as definable classes)
are \emph{straw equivalent} if there are class functions $t$ and
$s$ such that:\footnote{Strictly speaking, this definition should probably be articulated
in a class theory like $GBN$, but the formulation above should suffice
for the purposes of illustrating the problem we have in mind.} 
\begin{enumerate}
\item Any universe containing some $\mathcal{A}\in T$ is identical to the
universe containing $s\circ t(\mathcal{A})$; and
\item Any universe containing some $\mathcal{B}\in S$ is identical to the
universe containing $t\circ s(\mathcal{B})$.
\end{enumerate}
\end{defn}

Since (in the context of $ZFC$) there only one universe, $V$, it
should be clear that this equivalence relation on theories is all
but trivial. 
\begin{prop}
If $T$ and $S$ are theories (as definable classes) that each have
at least one definable structure within them, then $T$ and $S$ are
straw equivalent.
\end{prop}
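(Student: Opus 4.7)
The plan is to exploit the fact that, in the intended $ZFC$ setting, ``the universe'' is a rigidly designated object: there is only $V$. Consequently, the two conditions in the definition of straw equivalence collapse to the demand that $s\circ t(\mathcal{A})$ exist in $V$ whenever $\mathcal{A}\in T$, and symmetrically that $t\circ s(\mathcal{B})$ exist in $V$ whenever $\mathcal{B}\in S$. In other words, the only substantive requirement is that $t:T\to S$ and $s:S\to T$ be class functions at all; the ``universe-preservation'' clauses carry no information. So once I expose this collapse, the proof reduces to producing any pair of class functions between $T$ and $S$.

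Here is how I would carry it out. First, appeal to the hypothesis to fix a definable $\mathcal{A}_{0}\in T$ and a definable $\mathcal{B}_{0}\in S$ (the parameter-free definability is important only because the definitions of $s$ and $t$ we are about to write down will then themselves be parameter-free). Next, define the constant class functions
\[
t(\mathcal{A})=\mathcal{B}_{0}\ \text{for every }\mathcal{A}\in T,\qquad s(\mathcal{B})=\mathcal{A}_{0}\ \text{for every }\mathcal{B}\in S.
\]
Both are definable, and they have the right domains and codomains. Finally, verify the two clauses: for any $\mathcal{A}\in T$ the object $s\circ t(\mathcal{A})=\mathcal{A}_{0}$ lives in $V$, which is also the unique universe containing $\mathcal{A}$; the other direction is symmetric. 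Hence $T$ and $S$ are straw equivalent.

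There is essentially no obstacle here; the proof is almost entirely rhetorical. The point of stating the proposition is to dramatize how empty the ``same universe'' clause is once definability is not relativized to individual structures, and to motivate the search in the remainder of the paper for a genuinely discriminating successor notion. Accordingly, I would keep the write-up very brief and spend my words emphasizing what is failing, namely that the universe-identity condition imposes no constraint on $s$ and $t$, rather than belaboring the trivial verification.
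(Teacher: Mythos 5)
Your proposal is correct and matches the paper's intent exactly: the paper gives no formal proof, simply noting that since there is only one universe $V$ the clauses are vacuous, and your use of the definable structures $\mathcal{A}_{0}$ and $\mathcal{B}_{0}$ to build parameter-free constant class functions is precisely the argument the paper leaves implicit.
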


Thus for example, the theory of groups is straw equivalent to the
theory of topologies. Of course, there is nothing logically wrong
with the definition. It just fails to correspond well with our intuitions
on these matters. We tend to think that groups and topologies are
importantly distinct and so it is clear that straw equivalence does
not detect this distinction. It's not, however, difficult to place
at least some of the blame: we should be focused on comparing structures,
not universes. In the context of first order logic, we were able to
compare models using definability over those models. Now the means
of definability and their targets come apart. While we want the full
resources of the background universe available to define a new structure,
the targets of our comparison are the structures, not the universes.
This will motivate our next, slightly less bad, definition. 

\subsection{Sticks equivalence}

We'll now give another flawed characterization of interdefinability.
However, this one will be sufficiently improved that it will allow
us to draw out a pair of deeper problems that we'll discuss in the
next section. Recall that this time our goal is to compare structures
rather than universes while exploiting the full definability resources
of the background universe. Given that we started with straw, let
us follow the story of the the \emph{Three Little Pigs} for this next
iteration. 
\begin{defn}
Say that theory $T$ and $S$ (construed as class of sets) are \emph{sticks
equivalent} if there are class functions $t$ and $s$ such that:\label{def:Sticks}
\begin{enumerate}
\item $\mathcal{A}=s\circ t(\mathcal{A})$ for all elements $\mathcal{A}\in T$;
and
\item $\mathcal{B}=t\circ s(\mathcal{B})$ for all elements $\mathcal{B}\in S$. 
\end{enumerate}
\end{defn}

The key difference is that we are now concerned with whether our functions
return us to the same structures (as set) rather than universe. To
see how this definition performs let's try it out on a pair of different
theories of topology: $Top$ and $Nei$. We'll return to these theories
repeatedly throughout this paper so it will be worthwhile giving proper
definitions of them. The first is the familiar definition of topology
favored in textbooks today. 
\begin{defn}
We let $Top$ be the theory (as class) of sets of the form $\langle X,\mathcal{T}\rangle$
where $\mathcal{T}\subseteq\mathcal{P}(X)$ is such that: $\emptyset,X\in\mathcal{T}$;
if $X,Y\in\mathcal{T}$, then $X\cap Y\in\mathcal{T}$; and if $\mathcal{Z}\subseteq\mathcal{T}$,
then $\bigcup\mathcal{Z}\in\mathcal{T}$.\label{def:Top}
\end{defn}

The second definition is based on a neighborhood function which is
intended to take points from the domain and return the set of neighborhoods
containing that point. 
\begin{defn}
We let $Nei$ be the theory (as class) of set of the form $\langle X,\mathcal{N}\rangle$
where $\mathcal{N}:X\to\mathcal{P}\mathcal{P}(X)$ such that: if $Z\in\mathcal{N}(y)$,
$y\in Z$; if $Y\in\mathcal{N}(z)$ and $Y\subseteq W\subseteq X$,
then $W\in\mathcal{N}(z)$; if $Y,Z\in\mathcal{N}(w)$ then $Y\cap Z\in\mathcal{N}(w)$;
and if $y\in X$, there is some $Z\in\mathcal{N}(y)$ such that for
all $w\in Z$, $Z\in\mathcal{N}(w)$.\label{def:Nei}
\end{defn}

Note that while neither of these are theories in the sense of first
order logic, it is easy to define these classes -- from the outside
-- using the resources of set theory. We can the put the equivalence
relation above to work as follows:
\begin{prop}
$Top$ and $Nei$ are sticks equivalent.\label{prop:TopNei}
\end{prop}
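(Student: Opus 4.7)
The plan is to realize $t$ and $s$ via the classical back-and-forth between open-set topologies and neighborhood systems. Define $t\colon Top \to Nei$ by
\[t(\langle X, \mathcal{T}\rangle) = \langle X, \mathcal{N}_{\mathcal{T}}\rangle, \qquad \mathcal{N}_{\mathcal{T}}(x) = \{N \subseteq X : \exists U \in \mathcal{T},\, x \in U \subseteq N\},\]
and $s\colon Nei \to Top$ by
\[s(\langle X, \mathcal{N}\rangle) = \langle X, \mathcal{T}_{\mathcal{N}}\rangle, \qquad \mathcal{T}_{\mathcal{N}} = \{U \subseteq X : \forall x \in U,\, U \in \mathcal{N}(x)\}.\]
Both formulas are parameter-free, so $t$ and $s$ are class functions definable in $ZFC$. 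Crucially, each map preserves the carrier $X$ on the nose, so verifying the sticks-equivalence identities of Definition~\ref{def:Sticks} reduces to showing that the topology / neighborhood component is recovered as a literal set.

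Before checking the round trips, I would verify well-typedness: namely, that $\mathcal{N}_{\mathcal{T}}$ satisfies the four axioms of Definition~\ref{def:Nei} and that $\mathcal{T}_{\mathcal{N}}$ satisfies the three axioms of Definition~\ref{def:Top}. These are routine closure-under-set-operations checks; for instance, closure of $\mathcal{T}_{\mathcal{N}}$ under binary intersection follows from binary-intersection closure of each $\mathcal{N}(x)$, and closure under arbitrary unions follows from upward closure of each $\mathcal{N}(x)$.

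The substance lies in the two round-trip identities. For $s \circ t$: a set $U$ lies in $\mathcal{T}_{\mathcal{N}_{\mathcal{T}}}$ iff every $x \in U$ admits some $V \in \mathcal{T}$ with $x \in V \subseteq U$. The $\supseteq$ direction is trivial (take $V = U$); the $\subseteq$ direction writes $U$ as the union of the witnessing $V$'s and invokes arbitrary-union closure of $\mathcal{T}$. For $t \circ s$, we must show $\mathcal{N}_{\mathcal{T}_{\mathcal{N}}}(x) = \mathcal{N}(x)$ pointwise; the $\subseteq$ inclusion is immediate from the definition of $\mathcal{T}_{\mathcal{N}}$ together with upward closure in $\mathcal{N}(x)$, while the $\supseteq$ inclusion asks us, given $N \in \mathcal{N}(x)$, to produce a $\mathcal{T}_{\mathcal{N}}$-open set sandwiched between $\{x\}$ and $N$. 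The standard candidate is the \emph{interior} $U = \{y \in N : N \in \mathcal{N}(y)\}$, and showing $U \in \mathcal{T}_{\mathcal{N}}$ amounts precisely to unpacking the fourth, nesting axiom of Definition~\ref{def:Nei}.

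The main obstacle I expect is this last $\supseteq$ direction, since it is there that the idiosyncratic-looking fourth axiom of $Nei$ earns its keep: without it, the open sets extracted by $s$ would fail to rebuild the full filter of neighborhoods. Everything else is standard point-set topology dressed up as set-theoretic bookkeeping, with the observation that preserving $X$ \emph{exactly} is what promotes the textbook equivalence-of-structures argument to the literal set equality required by Definition~\ref{def:Sticks}.
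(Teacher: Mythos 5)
Your maps $t$ and $s$ are exactly the ones in the paper's proof sketch (neighborhoods of $x$ are the supersets of open sets containing $x$; opens are the sets that are neighborhoods of each of their points), and the paper leaves the round-trip verification to the reader, which you carry out correctly, including the key use of the fourth axiom of Definition~\ref{def:Nei} to build the interior. Same approach, just with the details the paper omits filled in.
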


\begin{proof}
(\emph{Sketch only}) We need to define class functions $t:Top\leftrightarrow Nei:s$
meeting the requirements of Definition \ref{def:Sticks}. Given $\langle X,\mathcal{T}\rangle\in Top$,
we want to define $t(\langle X,\mathcal{T}\rangle)=\langle X,\mathcal{N}\rangle$
where $\mathcal{N}$ is the neighborhood function corresponding to
$\mathcal{T}$. For this purpose, we let $\mathcal{N}$ be such that
for all $y\in X$
\[
\mathcal{N}(y)=\{Z\subseteq X\ |\ \exists W\in\mathcal{T}\ y\in W\subseteq Z\}.
\]
In other words, we let the neighborhoods of $y$ be those subsets
of $X$ that extend an open set containing $y$. In the other direction,
we start with $\langle X,\mathcal{N}\rangle$ and aim to define $s(\langle X,\mathcal{N}\rangle)=\langle X,\mathcal{T}\rangle$
where $\mathcal{T}$ is the topology corresponding to $\mathcal{N}$.
For this purpose, we exploit the final clause in \ref{def:Nei} which
says that every point has a neighborhood that is a neighborhood of
all its points; i.e., an open set. Thus we let 
\[
\mathcal{T}=\{Y\subseteq X\ |\ \forall z\in Y\ Y\in\mathcal{N}(z)\}.
\]
We leave it to the reader to verify that $t$ and $s$ witness clauses
(1) and (2) of Definition \ref{def:Sticks}.
\end{proof}
A point in favor of sticks equivalence is that it detects the intuitive
equivalence of $Top$ and $Nei$. Moreover, the proof strategy lines
up with what we'd expect from an informal equivalence claim made in
a textbook or classroom. I submit that the argument strategy above
is what people generally have in mind when they talk about interdefinability
in these cases. 

Let us now try a different pair of theories that are often regarded
as equivalent: Boolean algebras and Stone spaces. As with $Top$ and
$Nei$, we shall revisit these theories again in this paper so it
will pay to briefly describe them here. 
\begin{defn}
Let $Bool$ be the class of sets of the form $\langle B,\wedge,\vee,\neg,\top,\bot\rangle$
that deliver a uniquely complemented, distributive lattice.\footnote{See \citep{Givant2009} for a comprehensive introduction to Boolean
algebras. This definition comes from \citep{Bell1969}.}
\end{defn}

\begin{defn}
Let $Stone$ be the theory of structure of the form $\langle X,\mathcal{T}\rangle\in Top$
that are compact, Hausdorff and totally disconnected.\footnote{See Section 3.6 and, in particular, Definition 3.6.32 in \citep{HalvLogPhilSci}
for more detailed information.}
\end{defn}

The standard argument for the equivalence of these theories is known
as the Stone duality theorem. However, this is not sufficient to give
us sticks equivalence. The functions we use to define one structure
in terms of the other do not takes us back-and-forth and return us
to the same structure. Rather, they merely return us to a structure
that is isomorphic to the one we started with. With this in mind,
let us say that theories (as classes) are \emph{almost sticks equivalent}
if we revise clauses (1) and (2) in Definition \ref{def:Sticks} so
that $\mathcal{A}\cong s\circ t(\mathcal{A})$ and $\mathcal{B}\cong t\circ s(\mathcal{B})$.
Since we are using sets to represent structures, the natural notion
of isomorphism is just bijection. However, this is too weak for our
current purposes. As such, we shall also require that the intended
notion of isomorphism between structures is built in (by hand) into
the statement that two theories are almost sticks equivalent.\footnote{This is obviously a weakness of this characterization, but we will
defer addressing it until we get to our main proposal.} This is easier to explain by just stating our rough version of what
is know as the Stone duality.
\begin{thm}
$Bool$ and $Stone$ are almost sticks equivalent, where isomorphism
in $Bool$ is algebraic isomorphism and isomorphism in $Stone$ is
homeomorphism.\label{thm:BoolStone}
\end{thm}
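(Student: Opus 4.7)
The plan is to carry out the classical Stone duality construction, taking care that both maps are definable without parameters in ZFC so that they count as class functions in the sense of Definition \ref{def:Sticks}. I begin by defining the two class operations. For $t \colon Bool \to Stone$, given $\langle B, \wedge, \vee, \neg, \top, \bot\rangle$, I set $t(B) = \langle X_B, \mathcal{T}_B\rangle$, where $X_B$ is the set of ultrafilters on $B$ and $\mathcal{T}_B$ is the topology generated by the basic sets $U_b = \{F \in X_B : b \in F\}$ for $b \in B$. For $s \colon Stone \to Bool$, given $\langle X, \mathcal{T}\rangle$, I set $s(X)$ to be the algebra $\langle \mathrm{Clop}(X), \cap, \cup, X \setminus \cdot, X, \emptyset\rangle$ of clopen subsets. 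Since $Bool$ and $Stone$ were themselves defined without parameters, these operations are parameter-free as well.

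Next, I would verify that the images lie in the target classes. For $t(B)$: the family $\{U_b : b \in B\}$ is a clopen basis because $X_B \setminus U_b = U_{\neg b}$, which gives total disconnectedness; any two distinct ultrafilters are separated by some $U_b$, giving Hausdorffness; and compactness follows from the ultrafilter lemma (or by realizing $X_B$ as a closed subspace of $\{0,1\}^B$ and applying Tychonoff). For $s(X)$: the clopens are closed under the listed operations and inherit a uniquely complemented distributive lattice structure.

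Then I would verify the two round-trips up to the specified isomorphism type. Define the natural comparison maps $\eta_B \colon B \to \mathrm{Clop}(X_B)$ by $b \mapsto U_b$, and $\varepsilon_X \colon X \to X_{\mathrm{Clop}(X)}$ by $x \mapsto \{C \in \mathrm{Clop}(X) : x \in C\}$. Standard arguments show $\eta_B$ is an algebraic isomorphism (injectivity by the ultrafilter lemma, surjectivity because every clopen in $X_B$, being compact and a union of basic opens, is already of the form $U_b$), and $\varepsilon_X$ is a homeomorphism (injectivity from Hausdorffness plus total disconnectedness, surjectivity from compactness, and bicontinuity from the definition of $\mathcal{T}_{\mathrm{Clop}(X)}$).

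The main obstacle is not in the verifications themselves, which are well-trodden, but rather in \emph{not} being able to do better than isomorphism. Elements of $B$ and elements of $\mathrm{Clop}(X_B)$ sit at very different ranks of the cumulative hierarchy, and similarly for $X$ versus $X_{\mathrm{Clop}(X)}$; there is no reasonable choice of $t$ and $s$ that makes $\mathcal{A} = s \circ t(\mathcal{A})$ on the nose. This is precisely why the theorem must be stated for \emph{almost} sticks equivalence, with the notions of algebraic isomorphism and homeomorphism grafted on by hand, and it foreshadows the structural gap that the $\mathcal{V}^{*}$-framework is eventually designed to close.
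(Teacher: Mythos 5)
Your proposal is correct and follows essentially the same route as the paper: ultrafilters with the topology generated by the sets $U_b$ in one direction, the clopen algebra in the other, with the standard comparison maps $\eta_B$ and $\varepsilon_X$ supplying the required algebraic isomorphism and homeomorphism. You have simply filled in the verifications that the paper's sketch explicitly leaves to the reader (your use of the $U_b$ as basic opens rather than basic closeds is immaterial, since each $U_b$ is clopen).
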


\begin{proof}
(\emph{Sketch only}) We want to define functions $t:Bool\leftrightarrow Stone:s$
witnessing almost sticks equivalence. Let's define $t(\mathbb{B})=\langle X,\mathcal{T}\rangle$
for $\mathbb{B}\in Bool$ first. We start by letting $X$ be the set
of ultrafilters on $\mathbb{B}$. And then we define the natural topology
$\mathcal{T}$ on $X$ by using the sets 
\[
\{U\in X\ |\ b\in U\}
\]
for $b\in B$ be the basic closed sets from which we may generate
$\mathcal{T}$. In the other direction, let $s(\langle X,\mathcal{T}\rangle)=\langle B,\wedge,\vee,\neg,\top,\bot\rangle$
for $\langle X,\mathcal{T}\rangle\in Stone$ be formed by letting
$B$ be the set of clopen elements of $\mathcal{T}$ and letting $\wedge,\vee,\neg,\top$
and $\bot$ be $\cap,\cup,\cdot^{c},X$ and $\emptyset$ respectively.
Once again, we leave it to the reader to fill in the gaps.\footnote{See Section 3.7 \citep{HalvLogPhilSci} for a particularly elegant
and patient delivery of the proof of the Stone duality theorem. }
\end{proof}
While the property is weaker than the one we obtained in Proposition
\ref{prop:TopNei}, we still have another interdefinability result
that follows the standard proof of equivalence that we find in textbooks.
Moreover, the proof sketch above is also usually used to show that
natural theory category associated with $Bool$ and the opposite category
for $Stone$ are equivalent as categories.\footnote{The category for $Bool$ uses homomorphisms as arrows and the category
for $Stone$ use continuous maps. The opposite category is obtained
by reversing the arrows in a category. In other words, $Bool$ and
$Stone$ are duals as categories.} Thus, we have a pleasing link between sticks equivalence and category
theory. Later, when we have our intended proposal on the table, we
shall see that there are many more results like this one. 

Taking a little stock, we see that sticks equivalence seems to perform
quite well when it comes to replicating naive interdefinability arguments.
However, we said right at the beginning that this would be the case.
The hard part is proving inequivalences. And more than that, proving
inequivalences that line up passably well with our intuitions on these
matters. This is the subject of the next section. 

\subsection{Triviality}

Before we start picking on sticks equivalence, let's first add another
point its favor. In particular, let us show that sticks equivalence
isn't entirely trivial; i.e., there are pairs of theories that are
not sticks equivalent. To this end, let $Nat$ be the set of natural
numbers $\mathbb{N}$ and let $Real$ be the set of real numbers $\mathbb{R}$.
According to our crude definition, $Nat$ and $Real$ are theories
since they are classes of sets. Moreover, we see that: 
\begin{prop}
$Nat$ and $Real$ are not sticks equivalent. 
\end{prop}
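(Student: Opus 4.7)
The plan is to unpack what sticks equivalence requires in the special case where the classes $T$ and $S$ are just the sets $\mathbb{N}$ and $\mathbb{R}$, and then observe that the resulting demand is blocked by Cantor's theorem.

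First I would note that, by Definition \ref{def:Sticks}, a witness to sticks equivalence of $Nat$ and $Real$ consists of class functions $t$ and $s$ satisfying $s\circ t(n)=n$ for every $n\in\mathbb{N}$ and $t\circ s(r)=r$ for every $r\in\mathbb{R}$. Since $Nat$ and $Real$ are themselves sets, these class functions restrict to (or just are) ordinary functions $t:\mathbb{N}\to\mathbb{R}$ and $s:\mathbb{R}\to\mathbb{N}$. The two identity conditions say exactly that $s$ and $t$ are mutually inverse. From this I would extract, in the usual way, that $t$ is both injective (since $s\circ t=\mathrm{id}_{\mathbb{N}}$) and surjective (since every $r\in\mathbb{R}$ satisfies $r=t(s(r))$), and hence a bijection $\mathbb{N}\to\mathbb{R}$.

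The remainder is immediate from Cantor's theorem: there is no bijection between $\mathbb{N}$ and $\mathbb{R}$, so no pair $(t,s)$ of the required form exists, and $Nat$ and $Real$ are not sticks equivalent.

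There is no real obstacle here; the only thing worth flagging is conceptual rather than technical. The proof shows that once we demand the functions return us to the very same set rather than to something merely isomorphic, sticks equivalence collapses into a bare cardinality comparison whenever the ``theories'' happen to be sets. This foreshadows the next concern in the paper: sticks equivalence is too sensitive to the brute set-theoretic identity of the structures it compares, and the natural repair will be to weaken equality to an appropriate notion of isomorphism, as already foreshadowed in the passage to \emph{almost} sticks equivalence used for Theorem \ref{thm:BoolStone}.
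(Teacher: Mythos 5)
Your proposal is correct and is essentially the paper's own argument: the paper likewise observes that the witnessing functions would form a bijection from $\mathbb{N}$ onto $\mathbb{R}$, which is impossible by cardinality. You simply spell out the mutual-inverse details that the paper leaves implicit.
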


\begin{proof}
If there were functions $t:Nat\leftrightarrow Real:s$ witnessing
definitional equivalence, then $t$ would be a bijection from $\mathbb{N}$
onto $\mathbb{R}$, which is is impossible. 
\end{proof}
Aside from the fact that $Nat$ and $Real$ are quite unnatural as
theories, we see that the proof of their inequivalence is very crude:
it rest on a cardinality fact. We'd also like a tool that can distinguish
theories that have the same cardinality of structures. Moreover, it
doesn't seem unreasonable to expect that definability considerations
could achieve this. This brings us to the first of two intuitive triviality
problems. 

\subsubsection{When $V=HOD$\label{subsec:WhenV=00003DHOD}}

We are going to show that if a well-known axiom is added to $ZFC$,
sticks equivalence becomes all but trivial. First, we introduce the
axiom. Recall that in the context of $ZFC$, the statement $V=HOD$
says that every sets is hereditarily ordinal definable. A set $x$
is \emph{ordinal} \emph{definable} if there is some finite sequence
$\alpha_{0},...,\alpha_{n}$ of ordinals and a formula $\varphi(y,\alpha_{0},...,\alpha_{n})$
of $\mathcal{L}_{\in}$ such that 
\[
\varphi(y,\alpha_{0},...,\alpha_{n})\leftrightarrow y=x.
\]
A set $x$ is \emph{hereditarily} \emph{ordinal} \emph{definable}
if it and every set in its transitive closure is ordinal definable.\footnote{See Chapter V of \citep{KunenST} or Chapter 13 of \citep{JechST}
for more details.} However, for our purposes the following observation is the important
thing to note. In the context of $ZFC$, $V=HOD$ is true iff there
is a definable well-ordering of the universe; i.e., there is a formula
defining a relation $\prec$ that is linearly ordered and such that
every set has a $\prec$-least element \citep{MyhillScott}. Moreover,
the order type of this well-ordering is $Ord$.\footnote{This is somewhat helpful since, for example, it allows us to speak
of the $\alpha^{th}$ element of the well-ordering. Recall that is
trivial to define well-orderings longer than $Ord$. For example,
we might switch $0$ from being the least ordinal to being greater
than all other ordinals giving an ordering we might naturally denote
as $Ord+1$. } In the context of $V=HOD$, we then see: 
\begin{prop}
If $V=HOD$, then every pair of theories (construed as classes) that
have the same cardinality\footnote{I'm abusing terminology here by thinking of proper classes as having
cardinality $Ord$, when strictly, a cardinal should be a set. } are sticks equivalent.\label{prop:HODsticks}
\end{prop}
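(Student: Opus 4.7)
The strategy is to exploit the parameter-free definable global well-ordering of $V$ supplied by $V=HOD$ to manufacture a definable bijection between any two theories of the same cardinality. Since sticks equivalence only asks that the back-and-forth class functions return us to the same set, and places no constraint whatsoever on how structure is preserved, any parameter-free definable bijection will do the job.

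First, I would invoke the Myhill--Scott characterization already cited in the text: under $V=HOD$ there is a formula $\varphi_\prec(x,y)$ of $\mathcal{L}_\in$ (without parameters) defining a well-ordering $\prec$ of $V$ of order type $Ord$. From $\prec$ we obtain, for every parameter-free definable class $C$, a parameter-free definable enumeration $e_C \colon \lambda_C \to C$, namely the unique order isomorphism from some ordinal $\lambda_C$ (or $Ord$, in the proper-class case) onto $(C, \prec\!\!\restriction\! C)$. The key point is that $e_C$ is definable in exactly the parameters that define $C$, so if $C$ is parameter-free definable, so is $e_C$.

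Second, let $T$ and $S$ be theories of the same cardinality. If both are proper classes, then $\lambda_T = \lambda_S = Ord$, and I simply put
\[
t(\mathcal{A}) := e_S\bigl(e_T^{-1}(\mathcal{A})\bigr), \qquad s(\mathcal{B}) := e_T\bigl(e_S^{-1}(\mathcal{B})\bigr).
\]
If both are sets of common cardinality $\kappa$, then $\lambda_T$ and $\lambda_S$ are ordinals of cardinality $\kappa$, and I compose the above with the $\prec$-least bijection between $\lambda_T$ and $\lambda_S$ (this exists and is itself parameter-free definable since $\prec$ is). Either way, $t$ and $s$ are parameter-free definable class functions $T \leftrightarrow S$, and by construction $s = t^{-1}$, so clauses (1) and (2) of Definition \ref{def:Sticks} are immediate: $s \circ t = \mathrm{id}_T$ and $t \circ s = \mathrm{id}_S$.

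There is, frankly, no real obstacle here; the proposition is essentially a diagnosis rather than a theorem. The whole force of the argument is that once the background theory provides a parameter-free definable pairing of the universe with $Ord$, sticks equivalence collapses into coincidence of cardinality and ceases to track any of the structural features that motivated the definition. The conceptual work is therefore not in the proof but in identifying the right moral: our notion of interdefinability must be sensitive to more than the mere existence of a definable bijection between the underlying classes of structures, and in particular must interact with the internal structure carried by each $\mathcal{A} \in T$.
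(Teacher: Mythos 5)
Your proof is correct and follows essentially the same route as the paper's: fix the parameter-free definable global well-ordering given by $V=HOD$, use it to enumerate each theory, and match structures by their position in the enumeration, splitting into the proper-class and set cases. The only cosmetic difference is that in the set case you pass through the $\prec$-least bijection between the order types $\lambda_T$ and $\lambda_S$, whereas the paper routes both sets through their common cardinal; both yield the required definable inverse pair $t, s$.
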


\begin{proof}
Let $T$ and $S$ be theories with the same cardinality. We define
class functions $t:T\leftrightarrow S:s$ as follows. First fix a
definable well-ordering $\prec$ of the universe. For technical reasons,
we break the proof into two cases according to whether $T$ and $S$
are proper classes or sets. Next, suppose first that $T$ and $S$
are proper classes. For $\mathcal{A}\in T$, we fix $\alpha$ such
that $\mathcal{A}$ is the $\alpha^{th}$ element of $T$ in the $\prec$-order.
We then let $t(\mathcal{A})$ be the $\alpha^{th}$ element of $S$
in the order. Then let $s:S\to T$ be $t^{-1}$.  Suppose $T$ and
$S$ are sets with the same cardinality. Let $\sigma_{t}$ be the
$\prec$-least bijection from $T$ onto its cardinality; and let $\sigma_{s}$
be the analogous bijection for $S$. We then let $t=\sigma_{s}^{-1}\circ\sigma_{t}$
and let $s=t^{-1}$. 
\end{proof}
Recalling that there is a proper class of groups and a proper class
of topologies, we see that the theory $Group$ of groups and $Top$
are sticks equivalent if $V=HOD$. We thought it was bad when they
were straw equivalent and it's still seems wrong to see that they
are sticks equivalent. Of course, in this case we have an assumption
to blame: $V=HOD$. But even if we set that aside, we are still left
with a problem. 
\begin{prop}
If $ZFC$ proves that some pair of theories have the same cardinality,
then it cannot prove that they are not sticks equivalent; assuming
that $ZFC$ is consistent.
\end{prop}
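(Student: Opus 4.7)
The plan is to argue by contradiction, using Proposition \ref{prop:HODsticks} as a black box together with a relative consistency result about $V=HOD$.

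Suppose $T$ and $S$ are theories (in the sense of definable classes) such that $ZFC \vdash |T|=|S|$, and suppose toward contradiction that $ZFC$ also proves that $T$ and $S$ are not sticks equivalent. The first step is to pass to a consistent extension of $ZFC$ in which we can apply Proposition \ref{prop:HODsticks}. Since $ZFC$ is consistent, Gödel's constructible universe argument shows that $ZFC + V=L$ is consistent, and $V=L$ provably implies $V=HOD$. Hence $ZFC + V=HOD$ is consistent.

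Now I would pull everything together inside $ZFC + V=HOD$. Because $ZFC$ proves $|T|=|S|$, so does $ZFC + V=HOD$. Applying Proposition \ref{prop:HODsticks} in this theory yields that $T$ and $S$ are sticks equivalent. On the other hand, by our assumption for contradiction, $ZFC$ proves that $T$ and $S$ are not sticks equivalent, and this derivation is still available in the extension $ZFC + V=HOD$. Thus $ZFC + V=HOD$ proves both that $T$ and $S$ are sticks equivalent and that they are not, contradicting its consistency. The only assumption to retract is that $ZFC$ proves the non-equivalence, which establishes the claim.

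The main obstacle, such as it is, is a bookkeeping point rather than a mathematical one: one needs to verify that the proof of Proposition \ref{prop:HODsticks} goes through uniformly given only the schematic assertion $V=HOD$, so that the witnessing class functions $t$ and $s$ are definable in $ZFC + V=HOD$ from defining formulas for $T$ and $S$. Since the construction there explicitly uses the definable well-ordering provided by $V=HOD$ and reads off $t$ and $s$ from the defining formulas for $T$ and $S$ together with their (provable) cardinality, this is immediate and no additional work is required.
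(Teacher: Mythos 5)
Your proof is correct and follows essentially the same route as the paper: both arguments reduce the claim to Proposition \ref{prop:HODsticks} by passing to a consistent extension (equivalently, a model) of $ZFC + V=HOD$ in which the provable cardinality equality forces sticks equivalence. The only cosmetic difference is that you obtain the consistency of $ZFC+V=HOD$ via $V=L$, whereas the paper gestures at a forcing construction.
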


\begin{proof}
Let $T$ and $S$ be definable classes that $ZFC$ proves have the
same cardinality. Let $M$ be model satisfying $ZFC$ and $V=HOD$;
for example, start with a model of $ZFC$ and add a Cohen real. Then
in $M$, $T^{M}$ and $T^{S}$ have the same cardinality and are thus,
sticks equivalent.
\end{proof}
While we've take a step back from the brink of triviality, we see
that even without assuming $V=HOD$, something undesirable is occurring.
As we noted at the beginning of this paper, the hard task will be
proving that theories are inequivalent. With sticks equivalence it
will be almost impossible to do so. 

But beyond these mathematical difficulties, the proof of Proposition
\ref{prop:HODsticks} highlights something odd about the relationship
established by sticks equivalence in the context of $V=HOD$. To see
this, I'll start by claiming there is a common thought that when we
define one structure in terms of another, we are somehow \emph{transforming}
the objects of one theory into that of the other. We should be using
features of the structures of one theory in order to obtain the features
of the structures in the other. But the proof above doesn't live up
to this intuition at all. Rather, we might say that our assumption
that $V=HOD$ gives us a \emph{universal} \emph{lookup} \emph{table
}that assigns every structure in every theory a position in an ordering.
This can then be used to send structures in one theory to those in
another. But -- at least intuitively -- this process seems to make
no use of the particular features of those structures beyond their
externally assigned position. 

I realize that these observations are vague and imprecise, but nonetheless,
I still suspect that they touch on something close to our reasons
for thinking that something undesirable is occurring. Moreover, the
failure to exploit structural features may provide some explanation
at to why we come so close to triviality. As a spoiler, I will say
that this is a problem that our preferred solution doesn't entirely
avoid either, although we will be able to mitigate its effects using
forcing. A part of the problem is that we are trying to entertain
an extremely generous notion of definability that exploits the full
power of our background mathematics. This means that definability
will be sensitive not just to features of the structures we are considering
but to the universe itself. While sticks equivalence is exceedingly
simplistic, this simplicity makes it easy to illustrate such problems.
Moreover, this highlights the most common challenge for our project:
avoiding triviality.

\subsubsection{Counterintuitive Cantor-Bernstein\label{subsec:Counterintuitive-Cantor-Bernstei}}

Our next problem is more damaging for the framework above. However
once exposited, it will also provide a guiding light toward the our
preferred approach to theory comparison. First, recall that the Cantor-Bernstein
theorem tells us that whenever we have injections $f:A\to B$ and
$g:B\to A$, then there is a bijection $h:A\to B$. Moreover, the
delivery of $h$ is quite constructive; for example, the Axiom of
Choice is not required. With a little work, this result can then be
generalized to the world of definable classes and functions. 
\begin{thm}
If $T$ and $S$ are theories (as definable classes) and $t:T\to S$
and $s:S\to T$ are definable injections, then there is a definable
bijection $u:T\to S$.\label{Cantor-Bernstien}
\end{thm}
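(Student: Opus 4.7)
The plan is to replay the classical Bernstein proof of the Cantor-Bernstein theorem entirely inside the definable class setting, so that the only genuine task is verifying that each step is expressible by a single formula of $\mathcal{L}_{\in}$ built from the defining formulae of $T$, $S$, $t$, and $s$.

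First I would set $T_{0}=T\setminus s[S]$, the definable class of elements of $T$ not in the image of $s$. Rather than carry out an external recursion on $\omega$, I would then express the $n$-th Bernstein iterate $T_{n}$ uniformly by a single formula $\psi(x,n)$ of $\mathcal{L}_{\in}$ asserting the existence of a finite sequence $\langle x_{0},\ldots,x_{n}\rangle$ with $x_{0}\in T_{0}$, $x_{n}=x$, and $x_{i+1}=s(t(x_{i}))$ for each $i<n$. Letting $T_{\infty}=\{x\in T\colon\exists n\in\omega\,\psi(x,n)\}$, I would observe that $T_{\infty}$ is a definable class, contains $T_{0}$, is closed under $s\circ t$, and has complement (in $T$) contained in $s[S]$, so that $s^{-1}$ is well-defined on $T\setminus T_{\infty}$.

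Next, I would define the class function $u\colon T\to S$ by the case split $u(\mathcal{A})=t(\mathcal{A})$ if $\mathcal{A}\in T_{\infty}$ and $u(\mathcal{A})=s^{-1}(\mathcal{A})$ otherwise. Since both conditions are definable without parameters, so is the graph of $u$. Injectivity within each case follows from the injectivity of $t$ and $s$ respectively; injectivity across the two cases is ruled out by observing that if $t(\mathcal{A}_{1})=s^{-1}(\mathcal{A}_{2})$ with $\mathcal{A}_{1}\in T_{\infty}$, then $\mathcal{A}_{2}=s(t(\mathcal{A}_{1}))\in T_{\infty}$, contradicting the case split. For surjectivity, given $\mathcal{B}\in S$ I would split on whether $s(\mathcal{B})\in T_{\infty}$: if yes, then $s(\mathcal{B})\notin T_{0}$, so $s(\mathcal{B})=s(t(x))$ for some $x\in T_{\infty}$, whence $u(x)=\mathcal{B}$ by injectivity of $s$; if no, then $u(s(\mathcal{B}))=s^{-1}(s(\mathcal{B}))=\mathcal{B}$ directly.

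There is no real obstacle here in the sense of a new idea being needed; the challenge is strictly bookkeeping. The one point that deserves care is that the construction of the sequence $\langle T_{n}\rangle_{n<\omega}$ is not carried out by genuine recursion external to the theory but is realized by a single formula that quantifies over finite sequences — a move available precisely because $\omega$ and finite sequences are definable in $\mathcal{L}_{\in}$ without parameters. Without this observation one might worry that the definition of $T_{\infty}$ illicitly uses $\omega$ as a parameter, thereby leaving the ambit of parameter-free definability that the paper has adopted as standard.
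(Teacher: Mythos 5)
Your proof is correct, but it follows a different (equally standard) route through Cantor--Bernstein than the paper does. The paper's sketch traces the \emph{backward} history of a given $\mathcal{A}_{0}$ through alternating $s$- and $t$-preimages and splits into three cases according to whether that chain terminates in $T$, terminates in $S$, or never terminates, assigning $t$ in the first and third cases and $s^{-1}$ in the second. You instead form the \emph{forward} closure $T_{\infty}=\bigcup_{n}(s\circ t)^{n}[T\setminus s[S]]$ and split into only two cases, which means your $u$ actually disagrees with the paper's on doubly infinite orbits (you send those via $s^{-1}$, the paper via $t$) --- both choices work, since either map restricts to a bijection on such orbits. Your version has a concrete advantage for the class-theoretic setting: the only genuinely new issue in lifting the set version to definable classes is expressing the $\omega$-indexed iteration by a single parameter-free formula, and your $\psi(x,n)$ quantifying over finite sequences handles this explicitly, whereas the paper defers it to a citation. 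The paper's version, in turn, is chosen to make vivid the point it wants to press immediately afterwards --- that $u(\mathcal{A}_{0})$ depends on the ``history'' of how $\mathcal{A}_{0}$ sits under $s$ and $t$ rather than on intrinsic features of $\mathcal{A}_{0}$ --- which is the springboard for its discussion of the structuralist norm. Your construction exhibits the same dependence (membership in $T_{\infty}$ is just as non-structural), so nothing philosophical is lost, but the three-case presentation wears it more openly.
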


\begin{proof}
(\emph{Sketch only}\footnote{A full version of this argument in the context of sets rather than
classes can be found in Theorem 1.4 in \citep{schindler2014set}.}) Given some $\mathcal{A}_{0}\in T$ we ask if there is some $\mathcal{B}_{0}\in S$
such that $s(\mathcal{B}_{0})=\mathcal{A}_{0}$. If so, we then ask
if there is some $\mathcal{A}_{1}\in T$ such that $t(\mathcal{A}_{1})=\mathcal{B}_{0}$.
If so, we ask if there is some $\mathcal{B}_{1}\in S$ such that $s(\mathcal{B}_{1})=\mathcal{A}_{1}$.
We then repeat this process going back into the history of $\mathcal{A}_{0}$
via $s$ and $t$. 

Three things can happen. First, we might find some $\mathcal{A}_{n}$
for which there is no $\mathcal{B}_{n}$ such that $s(\mathcal{B}_{n})=\mathcal{A}_{n}$.
In this case, we let $u(\mathcal{A}_{0})=t(\mathcal{A}_{0})$. Second,
we might find some $\mathcal{B}_{n}$ for which there is no $\mathcal{A}_{n+1}$
such that $t(\mathcal{A}_{n+1})=\mathcal{B}_{n}$. In this case, let
$u(\mathcal{A}_{0})=s^{-1}(\mathcal{A}_{0})$. And finally, the process
might not stop. In this case, our choice doesn't matter, so let $u(\mathcal{A}_{0})=t(\mathcal{A}_{0})$. 
\end{proof}
For our purposes, the upshot of this is that a sufficient condition
for the sticks equivalence of $T$ and $S$ is the provision of definable
injections between them. The bijection then comes along for free.
While this sounds helpful, the proof above also draws out something
odd and perhaps counterintuitive. Although choice and well-ordering
are absent, we still seem distant from our intuition about transformation
of structures. We aren't just looking at features of the structures
themselves, but also history of the ways in which those structures
may have already been defined. We shall also see something similar
to this phenomenon with our preferred solution. However, in the context
of sticks equivalence there are more serious problems. 

Let $Set1$ be the theory consisting of set containing exactly one
object. Let $Set2$ be the class of sets that contain exactly two
objects. Intuitively, these appear to be quite different theories.
I don't think we would think of them as being interdefinable. Nonetheless,
we have the following: 
\begin{prop}
$Set1$ and $Set2$ are sticks equivalent.\label{prop:Set1Set2}
\end{prop}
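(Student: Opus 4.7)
The plan is to invoke the Cantor--Bernstein result just proved (Theorem \ref{Cantor-Bernstien}) and reduce the task to exhibiting a definable injection in each direction between $Set1$ and $Set2$. Once we have definable injections $t_{0}:Set1\to Set2$ and $s_{0}:Set2\to Set1$, Theorem \ref{Cantor-Bernstien} produces a definable bijection $u:Set1\to Set2$; taking $t=u$ and $s=u^{-1}$ immediately gives $s\circ t=\mathrm{id}_{Set1}$ and $t\circ s=\mathrm{id}_{Set2}$, which is precisely Definition \ref{def:Sticks}.

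For the injection $t_{0}:Set1\to Set2$, I would set
\[
t_{0}(\{x\})=\{x,\{x\}\}.
\]
By Foundation, $x\neq\{x\}$ (else $x\in x$), so the image genuinely has two elements. Injectivity follows because $\{x,\{x\}\}=\{y,\{y\}\}$ forces $\{x\}=\{y\}$: if instead $\{x\}=y$, then $y\in\{y\}=\{x\}$ gives $y=x$, whence $y=\{y\}$, contradicting Foundation. For the injection $s_{0}:Set2\to Set1$, the obvious singleton wrapper
\[
s_{0}(\{x,y\})=\{\{x,y\}\}
\]
does the job: it plainly lies in $Set1$ and is injective by extensionality. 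Both maps are defined by bounded formulas in $\mathcal{L}_{\in}$ without parameters, so they are definable class functions in the sense of the preceding definitions.

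The main obstacle here is conceptual rather than technical: there is nothing to grind through. That is precisely the point the subsection is making. The pair $Set1$ and $Set2$ look intuitively non-interdefinable --- one has ``one slot'' of structural information, the other ``two'' --- yet the combination of a completely generous notion of definability with Cantor--Bernstein collapses this intuition, because the back-and-forth produced by Theorem \ref{Cantor-Bernstien} makes use of the prior history of the injections rather than of any structural feature of the one- and two-element sets themselves. Thus the proposition is essentially a corollary of Theorem \ref{Cantor-Bernstien}, and its role in the paper is to flag sticks equivalence as too coarse an instrument, motivating the refinements developed in Section \ref{sec:V*}.
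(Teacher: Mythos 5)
Your proposal is correct and follows exactly the paper's own route: it invokes Theorem \ref{Cantor-Bernstien} and exhibits the very same injections $\{a\}\mapsto\{a,\{a\}\}$ and $\{b,c\}\mapsto\{\{b,c\}\}$. The additional Foundation-based check of injectivity and the closing remarks about why the result is counterintuitive are accurate elaborations of what the paper leaves implicit.
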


\begin{proof}
By Theorem \ref{Cantor-Bernstien}, it will suffice to show that there
are definable injections $t:Set1\leftrightarrow Set2:s$. Given $\{a\}\in Set1$,
we let $t(\{a\})=\{a,\{a\}\}\in Set2$. This is clearly an injection.
Given $\{b,c\}\in Set2$, we let $s(\{b,c\})=\{\{b,c\}\}\in Set1$.
This is also clearly injective. This is all we need.
\end{proof}
The first thing to note is that impoverished representation of mathematical
structures as sets is not to blame here. $Set1$ and $Set2$ are theories
whose structures are, so to speak, empty. There are no functions,
relations, topologies etc. We just have raw, unadulterated domains.
Moreover, we might regard these theories as categorical in the sense
that each theory only possesses one structure up to isomorphism, which
in this case, is just bijection. 

So what is going wrong? While it is not visible in the proof of Proposition
\ref{prop:Set1Set2}, the appeal to Theorem \ref{Cantor-Bernstien}
does something quite strange. Rather than just taking an element $\mathcal{A}$
of $Set1$ as a set with no distinguishing features other that its
being a singleton, we also look at the way in which $\mathcal{A}$
is built from other sets in the universe. In particular, we ask whether
$\mathcal{A}$ is a singleton of a pair $\mathcal{B}$ in $Set2$.
And we then ask whether $\mathcal{B}$ is such that one element of
that pair is the unique member of the other. And so on. We might say
that that we are making use of information that shouldn't be available;
that goes beyond the ``structural information'' contained in $\mathcal{A}$.
We might say that allowing access to such information violates an
obvious norm of a structuralist approach to mathematics: 
\begin{quote}
We should be able to freely modify the domains of structures without
changing the properties of those structures that we care about. 
\end{quote}
To see how this usually works in the context of definability, consider
a model $\mathcal{M}=\langle M,\sigma^{\mathcal{M}}\rangle$ of some
first order language. Our structuralist ideal is upheld by the fact
that we can replace the domain $M$ of $\mathcal{M}$ with any set
of the same cardinality to obtain a model $\mathcal{N}$ that is isomorphic
to $\mathcal{M}$. Moreover, for any structure $\mathcal{A}$ that
can be defined \emph{in $\mathcal{M}$ }through an interpretation,
there will be a corresponding structure $\mathcal{B}$ that is defined
over $\mathcal{N}$ by the same definition that isomorphic to $\mathcal{A}$.
Changing the domains of models of first order logic has no effect
on the properties we care about. 

The case of $Set1$ and $Set2$ is very different. Here the domains
of structures in $Set1$ or $Set2$ are just the structures (as sets)
themselves. Now consider $\mathcal{B}=\{b,\{b\}\}$ from $Set2$.
We see that $\mathcal{B}=s(\{b\})$ where $\{b\}\in Set1$. But suppose
we change the domain of $\mathcal{B}$ to form $\mathcal{B}^{*}=\{a,\{b\}\}$
by swapping out $b$ for some set $a\neq b$. Then $\mathcal{B}^{*}\neq s(\mathcal{A})$
for any structure $\mathcal{A}$ in $Set1$. This means that the function
$u:T\to S$ given by Theorem \ref{Cantor-Bernstien}, may do something
quite different with $\mathcal{B}$ and $\mathcal{B}^{*}$. And it
will do this because $\mathcal{B}^{*}$ is obtained by modifying the
domain of $\mathcal{B}$. Thus, our structuralist norm has been violated. 

Of course, this problem also emerges because our resources for defining
mathematical structures are so powerful. We aren't merely considering
what can be defined over some model of first order logic, we are using
the full resources of our background mathematics. Given that our goal
was to extend our resources in this way, we might wonder if this is
just a side-effect of our generosity. While I think there is something
to that thought, there are also some drawbacks. We agreed to use $ZFC$
above since it provides a well-understood foundation for mathematics,
but we also wanted to remain open to the use of other foundational
frameworks, like those in category theory and type theory. However,
the argument above is tied very closely to a particular set theoretic
perspective in which all sets appear in at a certain point in a cumulative
hierarchy. This perspective is very useful and, indeed, we'll use
it below. However, given that our goal is to provide a model of the
practice of working mathematicians and physicists, it seems preferable
to minimize the effect of such interventions in our framework. Furthermore,
recall that our goal is to compare mathematical structures, not the
way in which those structures are situated within a set theoretic
foundation. As such, I think maintaining the structuralist norm above
is a good idea. We want a framework that has the full definability
resources of our background mathematics, but which only has access
to information that is structurally significant. This will be one
of the main challenge in the development of our preferred approach.

Before we outline our framework, let's reflect on what we've learned
from sticks equivalence. On the positive side, we saw that sticks
equivalence gave us a plausible way of modeling familiar interdefinability
arguments. Moreover, the proofs required no modification; they could
be torn straight out of the textbook. We then observe that sticks
equivalence isn't entirely trivial. In particular, it can distinguish
theories with different cardinalities of structures. But as we pushed
further in the effort to find further inequivalences, the negative
side of things emerged. We found that equivalences were too easy to
obtain and that many of them were counterintuitive. In particular,
we were concerned about our inability to prove sticks inequivalence
for any pair of theories with the same cardinality. Finally, we observed
that when we combined our means of representing structures with our
generous definability resources, we were able to access information
beyond what we thought legitimately belonged to those structures.
Our goal now is to develop a framework that preserves that benefits
of stick equivalence while addressing its defects. In a nutshell,
we want a framework that maximizes our definability resources while
restricting its access to nonstructural information.

\section{$V$-logic and the $\mathcal{V}^{*}$ framework\label{sec:V*}}

We are ready for the final pig. While the underlying idea is very
simple, we need to go through quite a lot of technical material in
order to properly define our framework. As such, it will be helpful
to give a simpler motivating example that will capture the main intuition
behind our proposal. In the last section, we stated our goal to characterize
a notion of definability that provided a good model of our background
mathematics while remaining faithful to our structuralist ideal. We
noted that while interpretations between theories in first order logic
give a relative weak notion of definability, they do respect our structuralist
ideal of invariance under domain changes. Given that sticks equivalence
employed greater definability powers but failed on the structuralist
score, we might wonder if some kind of trade-off is in play. It will
be helpful to scotch this concern by considering interpretability
in the context of second order logic. 

First we do a quick recall of some basic definitions and notation.
Suppose we have a language $\mathcal{L}$ consisting of function and
relation symbols. The language of second order logic based on $\mathcal{L}$
is obtained by expanding the first order language with variables $X_{0}^{n},X_{1}^{n}$
for each $n\in\omega$, where such variables are intended to range
of $n$-ary relations. Now let $\mathcal{M}=\langle M,\sigma\rangle$
be a (first order) model of $\mathcal{L}$ where $M$ is the domain
and $\sigma$ is the interpretation of $\mathcal{L}$. A \emph{full}
second order $\mathcal{L}$-model $\mathcal{M}^{+}$ can be obtained
from $\mathcal{M}$ by expanding it with new domains $P^{n}=\mathcal{P}(M^{n})$
for all $n\in\omega$. These are the domains over which the second
order quantifiers are intended range.\footnote{See \citep{Shapiro} or \citep{VaanSOST} for more details.}
The essential idea is that every possible relation on $M$ is available
to be quantified over. The expressive and definability powers of second
order logic are much greater than first order logic. For example,
we may formulate a second order version of Peano arithmetic, $PA^{2}$,
which is categorical in the sense that there is only one model of
$PA^{2}$ up to isomorphism. In contrast, we know that there are many
pairwise non-isomorphic models of first order $PA$. Moreover, within
such a second order model of $PA$, may define sets of natural numbers
that are not definable in first order $PA$.\footnote{See, for example, Theorem 4.8 in \citep{Shapiro}.}
For example, we can define the set of natural numbers coding computable
well-orderings. While first order $PA$ can define the natural numbers
coding computable relations, it cannot define those that deliver the
well-orderings even if we restrict our attention to the standard model
of arithmetic.\footnote{The set of naturals coding computable well-orderings is a $\Pi_{1}^{1}$-complete
set, while we can only define $\Sigma_{n}^{0}$ sets of the standard
model. See, for example, Theorem 4.9 in \citep{mansfield1EDST}.}

So second order logic is more powerful that first order logic. But
is definability in second order logic compatible with out structuralist
ideal? It turns out that it is. Moreover, the theory of relative interpretation
along with standard notions like definitional equivalence and bi-interpretability
transfer directly over to the context of second order logic without
the need for modification. As such, our problem with $Set1$ and $Set2$
will not occur in this context. To see this, we'll start by giving
a gentle proof of a simple folk proposition that nicely encapsulates
a technique we'll continue to exploit and generalize throughout this
paper.
\begin{prop}
Let $\mathcal{M}=\langle M\rangle$ be a full second order model of
the empty theory in the empty language (so we just have the identity
relation). Then the only subsets of $M$ that are definable over $M$
are $M$ itself and $\emptyset$.\label{prop:protoAuto} 
\end{prop}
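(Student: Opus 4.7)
The plan is to exploit the fact that in the empty language, every permutation of $M$ is an automorphism of $\mathcal{M}$, and then to show that full second order satisfaction is invariant under automorphisms. The proposition will then reduce to the elementary observation that the only subsets of $M$ fixed by every permutation of $M$ are $\emptyset$ and $M$ itself.

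First I would verify that any bijection $\pi \colon M \to M$ lifts canonically to an automorphism $\hat{\pi}$ of the full second order model $\mathcal{M}^{+}$. Concretely, $\hat{\pi}$ acts as $\pi$ on $M$ and sends each $R \in \mathcal{P}(M^{n})$ to its pointwise image $\pi[R] = \{(\pi(a_{1}), \ldots, \pi(a_{n})) : (a_{1}, \ldots, a_{n}) \in R\}$. Because $\pi$ is a bijection, its induced action on $M^{n}$ is a bijection, so the induced map on $\mathcal{P}(M^{n})$ is a bijection of the second order domain onto itself. Since the language has no non-logical symbols, there is nothing to preserve beyond identity, and $\hat{\pi}$ is therefore an automorphism of $\mathcal{M}^{+}$.

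Next I would prove by induction on formula complexity that for any parameter-free second order formula $\varphi(x)$ and any $a \in M$, we have $\mathcal{M}^{+} \models \varphi(a)$ iff $\mathcal{M}^{+} \models \varphi(\pi(a))$. The propositional and first order cases are routine; the only new case is the second order quantifier $\exists X^{n}\,\psi$, where the bijection of $\mathcal{P}(M^{n})$ induced by $\pi$ shows that a witness $R$ for $\psi$ at $a$ gives the witness $\pi[R]$ for $\psi$ at $\pi(a)$, and vice versa.

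The conclusion is then immediate: if $A \subseteq M$ is defined without parameters by some second order $\varphi$, the invariance above forces $\pi[A] = A$ for every permutation $\pi$ of $M$. The cases $|M| \leq 1$ are trivial, and for $|M| \geq 2$ any putative $a \in A$ and $b \in M \setminus A$ would be swapped by a transposition, contradicting invariance. The only genuine obstacle is bookkeeping in the induction step for the second order quantifier; this is easy but worth stating with care, since the automorphism-invariance technique is evidently the prototype the author intends to generalize in the $\mathcal{V}^{*}$-framework to come.
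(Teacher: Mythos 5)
Your proposal is correct and follows essentially the same route as the paper's proof: both arguments lift a permutation of $M$ (ultimately a transposition of a point in $P$ with one outside it) to an automorphism of the full second order model, establish invariance of satisfaction by induction on formula complexity, and derive a contradiction. Your version merely makes explicit the pointwise-image action on the second order domains, which the paper leaves implicit.
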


\begin{proof}
Suppose toward a contradiction that $\mathcal{M}$ is a such a model
and that it can define a set other than $M$ or $\emptyset$. Then
clearly, $|M|\geq2$. Otherwise, $M$ and $\emptyset$ would be the
only subsets of the domain. Now let $P\subseteq M$ be such that $P\neq\emptyset$
and $P\neq M$ where $P$ is definable by some formula $\varphi_{P}(x)$
in the language of second order logic. Thus, we have 
\[
x\in P\ \Leftrightarrow\ \mathcal{M}\models\varphi_{P}(x).
\]
Now we may fix $m_{0},m_{1}\in M$ with $m_{0}\in P$ and $m_{1}\notin P$.
Then let $\sigma:M\to M$ be the permutation that switches $m_{0}$
and $m_{1}$ and leaves everything else alone. Now we come to the
crucial point. Noting that $\sigma$ is an isomorphism on $\mathcal{M}$,
it can be seen by induction on the complexity of formulae that for
all $n\in M$, 
\[
\mathcal{M}\models\varphi_{P}(n)\ \Leftrightarrow\ \mathcal{M}\models\varphi_{P}(\sigma n).
\]
Thus, we see that 
\begin{align*}
m_{0}\in P\Leftrightarrow\mathcal{M}\models\varphi_{P}(m_{0}) & \Leftrightarrow\mathcal{M}\models\varphi_{P}(\sigma m_{0})\\
 & \Leftrightarrow\mathcal{M}\models\varphi_{P}(m_{1})\Leftrightarrow m_{1}\in P
\end{align*}
which is a contradiction. 
\end{proof}
Using Proposition \ref{prop:protoAuto}, it is easy to see that $Set1$
and $Set2$ are not sticks equivalent. We note first that $Set1$
and $Set2$ are easily axiomatized in second order logic, and indeed,
first order logic. Indeed, $Set2$ cannot even interpret $Set1$.
By this we mean that there is no formula in second order logic such
that we can take an arbitrary model $\mathcal{M}=\langle M\rangle$
of $Set2$ and use that formula to define a model of $Set1$. Such
a formula would need to define a nonempty proper subset of $M$ which
is impossible. Note also the key move in the proof of Proposition
\ref{prop:protoAuto} is the use of automorphisms. In particular,
we are interested in the fact that nontrivial automorphisms, so to
speak, move objects while leaving definitions alone. We shall make
a lot of use of this idea below. 

Thus, we see that second order logic has greater expressive resources
than first order logic and it respects our structuralist ideal. One
might then wonder whether second order logic can deliver the framework
we are looking for. Perhaps second order logic can be used to characterize
theories in physics and offer a powerful, but natural notion of interdefinability.
There is something to this idea, but we are going to avoid this for
two reasons. First and perhaps controversially, we don't think second
order logic is sufficient to live up our goal of representing the
full definability capacities of our background mathematics. While
it defines a lot more than one might naively expect, definability
in second order logic is still clearly circumscribed within the background
framework of $ZFC$.\footnote{For example, for any well-ordering of type $\alpha$ that is definable
in second order logic, there is a sentence $\varphi_{\alpha}$ of
second order logic that is only satisfied in models $\mathcal{M}$
that are isomorphic to $V_{\alpha}$. See \citep{VaanSOST} for discussion
of this.} Second and perhaps more seriously, while second order logic is certainly
powerful, it is quite fussy to use. In order to represent physical
theories in second order logic, we need to translate the naive language
of mathematics in which they are described into the more constrained
syntax of second order logic. As such, the equivalence results we
highlighted above cannot simply be torn from the book in this context. 

If second order logic isn't enough, where should we turn? We might
generalize and consider third order logic, fourth order logic and
so on. Perhaps we might go to $\omega$-order logic, which can be
thought of as a simple type theory.\footnote{This is close to the framework proposed in \citep{Hudetz2017}.}
But the definability capacities of these options are still comfortably
circumscribed within $ZFC$. We'd like to go all the way, but what
does that mean? If we reflect -- a little crudely -- on second order
logic, we might think of of a full model of second order logic as
adding the powerset of the domain to its range of quantification.
Similarly, third order logic adds the powerset of that new domain
to its range of quantification. Simple type theory does this $\omega$-many
times. But why stop there? Why not take the union of the those $\omega$-many
domain and then take the powerset of that? Why not keep going? This
is essentially the process of generating the cumulative hierarchy
of sets, except that we want to start with a particular structure
rather than the empty set. This is the motivating idea behind our
framework. 
\begin{quote}
We want to define the universe of sets relative to a particular structure. 
\end{quote}
This will give us a notion of definability which is strong enough
to plausibly represent all of our background mathematics. Moreover,
if we define it properly, this notion of definability should also
respect our structuralist ideal since the universe is build built
\emph{on top of }a structure and so -- as with our second order logic
example -- the problems with $Set1$ and $Set2$ will be avoided. 

Now that we've described the idea behind our framework, let's now
consider a final wrong turn. Understanding this problem will give
a much clearer idea why our framework is defined in the way that it
is. Suppose we have a first order model $\mathcal{M}=\langle M,\sigma\rangle$
and we want to build the universe of sets above it. Here is a natural,
but misguided, way of doing this. By recursion, let
\begin{align*}
V_{0}^{\dagger}(M) & =M\\
V_{\alpha+1}^{\dagger}(M) & =\mathcal{P}(V_{\alpha}^{\dagger}(M))\cup V_{\alpha}^{\dagger}(M)\\
V_{\lambda}^{\dagger}(M) & =\bigcup_{\alpha<\lambda}V_{\alpha}^{\dagger}(M)\text{ for limit ordinals }\lambda
\end{align*}
and $V^{\dagger}(M)=\bigcup_{\alpha\in Ord}V_{\alpha}^{\dagger}(M)$.
So the idea is that we start with $M$ as $V_{0}^{\dagger}(M)$ at
the bottom. At successor levels we add the powerset of the previous
level to what we had before. And at limit ordinals, we collect up
what we already have formed. It's essentially the definition of $V$
except that we start with $M$ rather than the empty set. Thus, it
seems like it lines up nicely with our goals. Unfortunately, there
is problem. 
\begin{prop}
For all sets $M$, $V^{\dagger}(M)=V$. 
\end{prop}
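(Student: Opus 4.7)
The plan is to verify both inclusions separately. The direction $V^{\dagger}(M) \subseteq V$ is routine, and I would dispatch it by a transfinite induction on $\alpha$: each $V_{\alpha}^{\dagger}(M)$ is a set (using that $M$ is a set, together with Powerset, Union, and Replacement), and each of its elements is a set, so $V_{\alpha}^{\dagger}(M) \subseteq V$ at every stage, and taking the union over $\alpha \in Ord$ stays inside $V$.

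The interesting direction is $V \subseteq V^{\dagger}(M)$, and the decisive observation is that $\emptyset$ enters the hierarchy at the very first powerset step: since $\emptyset \subseteq M = V_{0}^{\dagger}(M)$, we have $\emptyset \in \mathcal{P}(M) \subseteq V_{1}^{\dagger}(M)$. Once $\emptyset$ is available, the standard cumulative hierarchy can be reconstructed above it, independently of whatever else $M$ contains. Formally, I would prove by transfinite induction on $\alpha$ that $V_{\alpha} \subseteq V^{\dagger}(M)$, from which the desired inclusion is immediate. The base case is trivial, and the limit case is just a union.

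The successor case is the one step that requires actual care. Given $V_{\alpha} \subseteq V^{\dagger}(M)$, each $x \in V_{\alpha}$ lies in some $V_{\beta_{x}}^{\dagger}(M)$, but in order to take a powerset inside the $\dagger$-hierarchy I need all of $V_{\alpha}$ sitting inside a single stage. This is exactly where Replacement does the work: the map $x \mapsto \beta_{x}$ on the set $V_{\alpha}$ has a set-sized range, so $\beta = \sup_{x \in V_{\alpha}} \beta_{x}$ is an ordinal and $V_{\alpha} \subseteq V_{\beta}^{\dagger}(M)$. Every $y \in V_{\alpha+1} = \mathcal{P}(V_{\alpha})$ is then a subset of $V_{\beta}^{\dagger}(M)$, hence an element of $V_{\beta+1}^{\dagger}(M) \subseteq V^{\dagger}(M)$.

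I do not anticipate any serious obstacle. The whole argument is a slight reshuffling of the standard proof that every set has a von Neumann rank, and the Replacement step in the successor case is the only moderately subtle ingredient. The punchline motivating the author's negative verdict on $V^{\dagger}$ is visible from the proof itself: the starting structure $M$ plays no essential role, because the first powerset operation silently deposits $\emptyset$ into the hierarchy, and from $\emptyset$ the entire universe is regenerated.
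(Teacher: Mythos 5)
Your proposal is correct and follows essentially the same route as the paper's own (two-line) proof: the key observation in both is that $\emptyset\in\mathcal{P}(M)\subseteq V_{1}^{\dagger}(M)$, after which a transfinite induction rebuilds the cumulative hierarchy inside $V^{\dagger}(M)$, the reverse inclusion being trivial. Your write-up merely fills in the induction the paper leaves to the reader (and the Replacement step you flag could even be avoided by strengthening the inductive hypothesis to $V_{\alpha}\subseteq V_{\alpha}^{\dagger}(M)$, which keeps all of $V_{\alpha}$ at a single stage automatically).
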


\begin{proof}
Recall that $\emptyset\in\mathcal{P}(x)$ for any set $x$. Thus,
$\emptyset=V_{0}\in V_{1}^{\dagger}(M)$. Building on that observation,
it can then be seen using induction that $V\subseteq V^{\dagger}(M)$.
And since it's trivially true that $V^{\dagger}(M)\subseteq V$, we
are done.
\end{proof}
Thus, if we do the obvious thing we end up recreating the universe.
Why is this a problem? If we want to uphold our structuralist ideal,
we want to block access to information about the particular sets that
compose the set $M$. If we don't hide this information, then we'll
end up with the problem we had with $Set1$ and $Set2$. As we've
seen this isn't a problem for first order or second order logic, or
indeed, type theory. The recreating problem seems to be the result
of our lofty goals regarding definability. If we want to have the
definability powers of all of our background mathematics, then the
obvious way of modeling this in $ZFC$ gives us the entire universe
and access to too much information about structures. 

Fortunately, there is a standard way around this problem. If we want
to avoid information about the particular sets that compose the domain
of a structure -- like we did with first and second order logic --
then we could treat the domain as set of \emph{atoms} (or urelements).
An atom is an object that is not a set: while it can be a member of
a set, it has no members itself. So like an atomic proposition or
an atomic element in a Boolean algebra, there is a sense in which
a set-theoretic atom cannot be further analyzed. Of course (with the
possible exception of the emptyset) our preferred set theory $ZFC$
does not admit atoms. To get around this we shall make use of the
well-known variation, known as $ZFCU$ or $ZFCA$, $ZFC$ with atoms,
in order to define our framework.\footnote{We'll follow the axiomatization given in \citep{jechAC}. More information
about this and related systems can be found in \citep{Barwise,YaoZFCUforcing}
and \citep{yao2023}. In this paper, I'm going to adopt the (arguably
awkward) halfway house convention of referring to the non-sets as
atoms rather than urelements, but use the notation $ZFCU$ rather
than $ZFCA$ to denote our theory of these objects. I'll try to explain
why I'm doing this, but I'd like to express my gratitude to Bokai
Yao for arguing so passionately about this issue. As I understand
it, researchers involved in set theory are currently divided over
whether to talk about $ZFCU$ or $ZFCA$. Philosophers who work with
set theory tend to talk about urelements and $ZFCU$ \citep{McGeeST,Menzel2014};
while mathematicians tend to talk about atoms and $ZFCA$ \citep{zapletal2025,blass2025,HowardTachtsis2013,Hall2002}.
The philosophers I have spoken to tend to offer arguments and reasons
for their choice of terminology, while the mathematicians seem to
be following a default convention. Given that philosophers are prone
to offering reasons and arguments for just about everything, I'm not
sure that this tells us much. So let's quickly consider the three
main arguments I've seen. First, the term ``atom'' is wildly overused
in mathematics, logic, science and philosophy: Boolean algebras have
atoms; so does logic; so does physics; and also mereology to isolate
just a few places. It would be good to have a name with less risk
of confusion and ``urelement'' delivers on this. Second, there is
a risk that $ZFCA$ will be confused with the set theory based on
Aczel's anti-foundation axiom \citep{AczelNWFS}. And finally, $ZFCU$
lines up better with generalizations of Quine's $NF$ to obtain a
theory $NFU$ that accommodates urelements and atoms \citep{Jensen1969}.
I think the first of these reasons is the strongest. We might also
consider the respective etymologies of ``atom'' and ``urelement.''
On this axis, they appear to be about even: an atom is an \emph{indivisible}
entity; while an urelement is a \emph{primitive }element. Finally,
we might consider the history of these terms. While non-sets were
present in Zermelo's theory as delivered in \citep{Zermelo1908},
the first use of the term seems to be in \citep{Zermelo1930}. I am
less sure when ``atom'' enters the lexicon although it is obviously
the choice made in \citep{jechAC}. The terminology seems to have
been quite fluid in middle of the twentieth century with some authors
even referring to ``individuals'' \citep{Mostowski1945}. But given
the importance of \citep{jechAC} in the study of the axiom of choice
one might naturally speculate that this led the default convention
that is followed by mathematicians today. So where does this leave
us? Philosophers have a plausible argument for their choice and they're
often quite vehement about it. On the other hand, mathematicians just
seem to be following a familiar linguistic convention. Given this,
I have decided to defer to the passionate and refer to $ZFCU$ rather
than $ZFCA$. However, in ongoing conversations with people about
some proofs I have discovered that I just keep talking about ``atoms''
rather than ``urelements,'' so I will continue to (unofficially)
refer to them in this way below.} Our goal will be to use $ZFCU$ to build universes of sets over mathematical
structures while blocking access to information about how the domains
of those structures are constituted. In a nutshell, we are going to
work in $Ord$-order logic, or what we might call $V$-logic. 

Finally, we are ready to exposit our framework, which we shall do
in stages. We'll start by laying out the background theory and the
intuitive perspective one should take on it. Then we'll describe our
way of representing mathematical structures and theories of those
structures. With this in place, we can then deliver our a generalized
theory of interpretation which will give us the notion of definability
we require. Finally, we shall characterize versions of definitional
equivalence and bi-interpretability that are compatible with our framework
and which naturally generalize the corresponding concepts in first
order logic.

\subsection{Background Theory and Perspective}

Although we are going to make use of $ZFCU$, we are still going to
officially work in $ZFC$. This will be helpful for arguments later
on but it also allows us to work in a standard and well-understood
environment. To make the connection between $ZFC$ and $ZFCU$, we
shall work in $ZFC$ to define (via interpretation) a sufficiently
large playground from which models of $ZFCU$ can be generated over
any structure we care to consider. 

Essentially following \citep{Barwise}, given any set $X$ we define
a hierarchy of sets over $X$ as follows. 
\begin{align*}
V_{0}^{*}(X) & =\{\langle0,x\rangle\ |\ y\in X\}\\
V_{\alpha+1}^{*}(X) & =\{\langle1,Y\rangle\ |\ Y\subseteq V_{\alpha}^{*}(X)\}\cup V_{\alpha}^{*}(X)\\
V_{\lambda}^{*}(X) & =\bigcup_{\alpha<\lambda}V_{\alpha}^{*}(X)\text{ for limit ordinals }\lambda
\end{align*}
Then we let $V^{*}(X)=\bigcup_{\alpha\in Ord}V_{\alpha}^{*}$. Note
that this is quite different from our naive definition of $V^{\dagger}(X)$
above. We might say that the elements of $V^{*}(X)$ have extra information
appended to them. In particular, elements of $V^{*}(X)$ are of the
form $\langle i,y\rangle$ where $i\in\{0,1\}$. If $i=0$, then $\langle0,y\rangle$
is intended to be an atom; and if $i=1$, then $\langle1,y\rangle$
is intended to play the role of a set. Now we want $V^{*}(X)$ to
play the role of the universe of sets over $X$, so we also need a
membership relation. However, since the elements of $V^{*}(X)$ are
ordered pairs, we cannot use the membership relation from the background
universe: we must define it. This is easily done as follows: for $\langle i,y\rangle,\langle j,z\rangle\in V^{*}(X)$,
we let $\langle i,y\rangle\in^{*}\langle j,z\rangle$ iff $j=1$ and
$\langle i,y\rangle\in z$. 

To obtain a sufficiently large playground to work in, we then let
$V^{*}=\bigcup_{X\in V}V^{*}(X)$. Thus, we combine all the universes
built over sets into one big universe in which we can do all our work.
We then may define $\mathcal{V}^{*}=\langle V^{*},\in^{*},A\rangle$
where $A$ is the class of atoms; i.e., the class of those $\langle i,y\rangle\in V^{*}$
such that $i=0$. $\mathcal{V}^{*}$ will be the place in which we
develop our framework. To show that this does what we want, we now
want to show that if we work in $\mathcal{V}^{*}$ the recreation
problems we saw above are avoided. To see this let's work, so to speak,
inside $\mathcal{V}^{*}$. We shall use the language $\mathcal{L}_{\in}(At)$
which expands $\mathcal{L}_{\in}$ with a one-place relation symbol
$At$. Then we shall interpret $\in$ as meaning $\in^{*}$ and $At$
as meaning $A$. Now working in $\mathcal{V}^{*}$ suppose that $X$
is a set of atoms: i.e., every $y\in X$ is such that $At(x)$. We
wish to show that the hierarchy above $X$ is a model of $ZFCU$ where
$X$ forms the atoms of this hierarchy. This will show us that we've
avoided recreation since the elements of $X$ are treated as atoms.
But before we can do this, we need to be more specific about the theory,
$ZFCU$.

We work in the language $\mathcal{L}_{\in}(@)$ that expands $\mathcal{L}_{\in}$
with a constant symbol $@$ that is intended to denote the \emph{set}
of atoms.\footnote{Note this is different to the language we used to talk about $\mathcal{V}^{*}$since
there we use a relation symbol $At$ rather than a constant symbol
$@$. It is easy to see that in $\mathcal{V}^{*}$, the extension
$A$ of $At$ is not a set. There is no set $x\in\mathcal{V}^{*}$
such that for all $y$, $y\in^{*}x$ iff $At(y)$. This means that
$\mathcal{V}^{*}$ is not a model of the standard version of $ZFCU$.
It is, however, a model of a generalization of that theory which admits
the use of proper classes of atoms. See Section 1.2 of \citep{yao2023}
for further discussion.} We then expand our language with a a relation symbol $\mathfrak{S}$
whose extension is defined to be the complement of $@$; i.e., $\mathfrak{S}$
denotes the sets.\footnote{Although $\mathfrak{S}$ is a relation symbol, we shall frequently
abuse notation (and follow a common convention) by writing $x\in\mathfrak{S}$
rather than $\mathfrak{S}x$.} The axioms of $ZFCU$ are much the same as those of $ZFC$ except
that we need to take care of the distinction between sets. First,
we add a new axiom to describe the behavior of atoms. 
\begin{lyxlist}{00.00.0000}
\item [{(Atoms)}] $z\in@\to\forall y\ y\notin z$.
\end{lyxlist}
Thus, we ensure that atoms have no members. When formulating the rest
of the axioms, we need to ensure that we are talking about sets at
the right times. For example, 
\begin{lyxlist}{00.00.0000}
\item [{(Extensionality)}] $x,y\in\mathfrak{S}\wedge\forall z(z\in x\leftrightarrow z\in y)\to x=y$. 
\end{lyxlist}
To see the importance of the first clause, observe that without it,
it would apply to any pair $x,y$ of atoms. Since atoms have no members,
the axiom would tell us that $x=y$ and thus, there would be just
one atom, which would be identical to the empty set. The empty set,
$\emptyset$, itself can be defined in the usual way using Infinity
and Separation. Note, however, that saying $x\neq\emptyset$ does
not entail that $x$ is nonempty, since $x$ could be an atom. This
also affects the natural definition of subset. We shall say that $x$
is a subset of $y$, $x\subseteq y$, iff $x\in\mathfrak{S}$ and
for all $z\in x$, $x\in y$. Without the extra restriction, we end
up with slightly unpleasant feature that every atom is a subset of
every set.\footnote{We note, however, that this isn't a big problem. For example, \citeauthor{yao2023}
uses the ordinary definition of subset and just adds the restriction
to sets when required. For example see his version of the Powerset
axiom on page 4 of \citep{yao2023}.} With these traps for young players described, we leave it to the
reader to adapt the rest of the axioms, although we recommend the
use of \citep{Barwise,jechAC} and \citep{yao2023} for further reference.

Now we return to our goal of showing that $\mathcal{V}^{*}$ provides
a suitably large playground. In particular, we want to show that we
may generate a natural hierarchy of sets of any set of atoms from
$\mathcal{V}^{*}$. Let us now work within $\mathcal{V}^{*}$ and
take a set $X$ of atoms. We then use recursion to define the following
hierarchy where:
\begin{align*}
V_{0}(X) & =X\\
V_{\alpha+1}(X) & =\mathcal{P}(V_{\alpha}(X))\cup V_{\alpha}(X)\\
V_{\lambda}(X) & =\bigcup_{\alpha<\lambda}V_{\alpha}(X)\text{ for limit ordinals }\lambda.
\end{align*}
We then let $V(X)=\bigcup_{\alpha\in Ord}V_{\alpha}(X)$. We then
note that: 
\begin{prop}
($\mathcal{V}^{*}$)\footnote{We write ``($\mathcal{V}^{*}$)'' at the beginning of this proposition
to indicate that while we are using a background set theory like $ZFC$,
the proposition is about the$\mathcal{V}^{*}$-framework. Thus, to
bring things back to our background set theory, we should be understood
as using the interpretation above to translate the statement that
follows back into the ordinary language of set theory. } If $X$ is a set of atoms then: $\langle V(X),\in,X\rangle$ satisfies
$ZFCU$.\label{prop:V*buildsV(X)}
\end{prop}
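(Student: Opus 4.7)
The plan is to work inside $\mathcal{V}^{*}$ (whose theory includes a generalization of $ZFCU$ with a proper class of atoms) and verify the axioms of $ZFCU$ one by one for $\langle V(X),\in,X\rangle$, interpreting the constant $@$ as $X$ and the predicate $\mathfrak{S}$ as $V(X)\setminus X$. Two observations do most of the work. First, $V(X)$ is $\in$-transitive: if $y\in z\in V_{\alpha+1}(X)$ and $z\notin X$, then by construction of the hierarchy $z\subseteq V_{\alpha}(X)$, so $y\in V_{\alpha}(X)\subseteq V(X)$. Second, every $z\in V(X)\setminus X$ has a well-defined rank $\rho(z)=\min\{\alpha\,:\,z\in V_{\alpha+1}(X)\}$ with $y\in z$ implying $\rho(y)<\rho(z)$ (setting $\rho(y)=0$ for $y\in X$); because $V(X)$ is transitive, this rank is computed the same way externally in $\mathcal{V}^{*}$ and internally in $\langle V(X),\in,X\rangle$.

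With these in hand, the \emph{Atoms} axiom is immediate because elements of $X$ are atoms of $\mathcal{V}^{*}$ and so have no members. \emph{Extensionality} and \emph{Foundation} descend from the corresponding axioms in $\mathcal{V}^{*}$ using transitivity. \emph{Pairing}, \emph{Union}, and \emph{Empty Set} reduce to routine rank computations: if $a,b\in V_{\alpha}(X)$ then $\{a,b\}\in V_{\alpha+1}(X)$; $\bigcup a$ for $a\in V_{\alpha}(X)$ is a subset of $V_{\alpha}(X)$ and hence a member of $V_{\alpha+1}(X)$; and $\emptyset\in V_{1}(X)$ since $\emptyset\subseteq X=V_{0}(X)$. \emph{Infinity} holds because the finite von Neumann ordinals are constructed within $V_{\omega}(X)\subseteq V(X)$. \emph{Powerset} follows since, whenever $s\in V_{\alpha}(X)$ is a set, transitivity forces $s\subseteq V_{\alpha}(X)$, so every subset of $s$ is itself a subset of $V_{\alpha}(X)$ and therefore a member of $V_{\alpha+1}(X)$, with the collection of all such subsets landing one stage higher. \emph{Separation} is then straightforward: a definable subcollection of any $s\in V(X)$ is a subset of $s$ and so lives at the next rank.

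The main obstacle is \emph{Replacement} (and, by a parallel argument, \emph{Choice}), because functions definable over $\langle V(X),\in,X\rangle$ are not immediately recognizable as functions in $\mathcal{V}^{*}$. The strategy is to relativize all quantifiers of the defining formula $\varphi$ to $V(X)$, producing a formula $\varphi^{V(X)}$ in the language of $\mathcal{V}^{*}$ that defines the same functional relation. Replacement in $\mathcal{V}^{*}$ then yields the image of a set $a\in V(X)$ as a set in $\mathcal{V}^{*}$; a second application of Replacement to the rank map $\rho$ shows that the ranks of the image elements are bounded by some ordinal $\beta$, placing the whole image inside $V_{\beta}(X)\subseteq V(X)$. \emph{Choice} follows analogously: a choice function supplied by $\mathcal{V}^{*}$ over a set of nonempty sets in $V(X)$ automatically has bounded rank and so already belongs to some $V_{\beta}(X)$. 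The care required for Replacement is the one nontrivial ingredient; everything else amounts to confirming that the standard construction of a cumulative hierarchy over a set of atoms survives intact inside $\mathcal{V}^{*}$.
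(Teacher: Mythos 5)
Your argument is correct, and in fact the paper offers no proof of this proposition at all --- it is one of the ``obvious facts'' the author explicitly leaves to the folk. Your verification is exactly the standard one the paper presupposes: transitivity of $V(X)$ plus the rank function handle the easy axioms, and the only point needing genuine care is Replacement (and Choice), which you correctly dispatch by relativizing the defining formula to $V(X)$, applying Replacement in $\mathcal{V}^{*}$, and then bounding the ranks of the image to land inside some $V_{\beta}(X)$. The only quibble is a harmless indexing slip: with $\rho(y)=0$ for atoms and $\rho(z)=0$ for nonempty subsets of $X$, the strict inequality $\rho(y)<\rho(z)$ fails at that bottom level, but nothing in your use of the rank depends on it.
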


Why is this important? First, we see that if we work within $\mathcal{V}^{*}$,
we avoid the recreation problem since $V(X)$ is clearly a proper
subclass of $V^{*}$. Second it gives us a way of taking an arbitrary
domain $X$ of atoms and building the full $Ord$-length hierarchy
over $X$. Finally, $\mathcal{V}^{*}$ does this in such a way that
the manner in which the particular atoms are composed is invisible.
Thus, we have a framework that has the potential to meet our goals. 

Before we move on, we note a special case of Proposition \ref{prop:V*buildsV(X)},
where $X$ is the empty set of atoms; i.e., the empty set. Inside
$\mathcal{V}^{*}$ this gives us $V(\emptyset)$, which can be seen
from the outside (in our background universe $V$ where we define
$\mathcal{V}^{*}$) to be isomorphic to the universe. We shall call
$V(\emptyset)$ the \emph{kernel} and when working inside $\mathcal{V}^{*}$
we shall denote it as $V$.\footnote{From the outside, where we define $\mathcal{V}^{*}$, we shall denote
the kernel as $V^{\mathcal{V}^{*}}$. The conceptual perspective,
not to mention the notation, can be a little confusing at first. However,
we will almost never need to pay much attention to these issues. The
exception to that rule of thumb occurs at the end of this paper in
Section \ref{subsec:What-can-we-do?} when we consider some extreme
limitation of our proposal.} Sets in the kernel are sometimes known as \emph{pure sets }since
they are not built upon any atoms. The kernel provides a fixed core
from which all of the usual objects we define in $ZFC$ can be found
within the context of $\mathcal{V}^{*}$. It will play an important
role later on. 

\subsection{Structured Sets and Theories}

We now have a way of defining a universe of sets over a particular
domain, but our main goal is to do this for an arbitrary mathematical
structure. In Section \ref{sec:1}, we used a particularly simplistic
representation of mathematical structure by just treating them as
sets. While this served us well enough for the purposes of illustrating
some naive problems, it is far too simple to give a good account of
interdefinability. A set is merely a domain upon which we may situate
structure. Our task now is to find a good technique for representing
structure on some domain. In first order logic, this was relatively
straightforward. For example, given a domain, $M$: constants are
elements of $M$; and relations are subsets of finite products of
$M$. But as we discussed earlier, not all mathematical structures
are naturally represented in this way. For example, if we consider
a topology $\langle X,\mathcal{T}\rangle$ we see that while $\mathcal{T}$
might resemble a one-place relation, $\mathcal{T}$ is a subset of
$\mathcal{P}(X)$ not $X$. Moreover, if all we have in addition to
the domain $X$ is $\mathcal{T}$ it is not so clear how we might
articulate axioms to ensure that $\mathcal{T}$ is a topology. 

While the framework of $\mathcal{V}^{*}$ comes with some conceptual
subtleties that can take a little while to get used to, it does allow
us to give an exceedingly general representation of a mathematical
structure that is quite easy to understand. First recall that the
transitive closure of a set $x$, $trcl(x)$, is the set of those
$y$ such that there exist $z_{0},...,z_{n}$ such that $y\in z_{0}\in...\in z_{n}\in x$.\footnote{See Section I.6 of \citep{Barwise} for a more nuanced and precise
discussion of transitive closure in the context of set theory with
atoms.}
\begin{defn}
($\mathcal{V}^{*}$) Say that $\mathcal{A}=\langle A,a\rangle$ is
a \emph{structured set }if:\label{def:StrucSet}
\begin{enumerate}
\item $A$ is a set of atoms; and
\item $a$ is a set such that $tr(\{a\})\subseteq A$.
\end{enumerate}
\end{defn}

The idea is that $A$ is the \emph{domain }and $a$ is the \emph{structure
}of $\mathcal{A}$. It's a very simple definition, yet surprisingly
powerful. With our representation of structure to hand, we can now
define the universe over a particular structured set. 
\begin{defn}
($\mathcal{V}^{*}$) Given a structure set, $\mathcal{A}$, let the
\emph{universe over $\mathcal{A}$, $V(\mathcal{A})$,} be such that
\[
V(\mathcal{A})=\langle V(A),\in,A,a\rangle.
\]
\end{defn}

Thus, we take the domain $A$ and then build the hierarchy $V(A)$
above it. By including the membership relation, domain and structure
in the universe we aim to give ourselves access to enough information
to develop a powerful notion of definability that matches our intuitions
about what it means to define one mathematical structure using another.
The following diagram, inspired by those in \citep{Barwise} and \citep{ershov1996definability},
gives a rough illustration of a structured set.

\begin{center}
\begin{tikzpicture}   
% Draw ellipses at top and bottom   
\draw[thick] (0,3) ellipse (2 and 0.3);   
\draw[thick] (0,0) ellipse (1.2 and 0.25);      

% Connect with lines to form the cone shape   
\draw[thick] (-2,3) -- (-1.2,0);   
\draw[thick] (2,3) -- (1.2,0);    

% Vertical line from center of bottom ellipse (stops at triangle base)   
\draw[thick] (0,0) -- (0,1.1);      

% Triangle on top of the line (larger and unfilled)   
\draw[thick] (0,1.5) -- (-0.2,1.1) -- (0.2,1.1) -- cycle;

% Labels   
\node at (2.5,3) {$\mathcal{V}(\mathcal{A})$};   
\node at (1.6,0) {$\mathcal{A}$};
\node at (-0.3,1.3) {$a$};  % Label on the triangle (moved left)   
\node at (-0.3,0.45) {$A$};  % Label above the bottom ellipse (moved left)
\end{tikzpicture}
\end{center}

Our next task on the road to delivering this is to describe how we
can formulate \emph{theories} that will determine collections of structured
sets in a tractable manner. For this purpose, we describe a language
in which to articulate our theories. Since our representation of structures
is simple, so is the language. We let $\mathcal{L_{\in}}(D,d)$ be
the language expanding $\mathcal{L}_{\in}$ with two constant symbols,
$D$ and $d$. Their intended denotation will be the domain and structure
of a structured set respectively. Given that we are hoping for an
extremely general method, one might worry that we haven't included
any relation of function symbols. We'll assuage these concerns with
an example soon, but the essential reason we don't need relation or
function symbols is that we are working in a set theoretic environment
in which relations and functions can be represented by sets which
are naturally denoted by constant symbols. 

A theory in $\mathcal{L}_{\in}(D,d)$ will simply be a sentence $\varphi$
from this language. One might worry that a single sentence will not
suffice for this purpose. For example, when articulating foundational
theories of arithmetic, analysis or sets we usually make use of an
infinite but computable axiomatization. Again the power of our set
theoretic framework makes such devices redundant. This redundancy
already surfaces in the weaker context of second order logic, where
the computable schemata of $PA$ and $ZFC$ can be replaced by second
order counterparts, making their axiomatizations finite. 

Next we need to know when a particular structured set satisfies some
sentence $\varphi$ of $\mathcal{L}_{\in}(D,d)$. First let us recall
how to relativize a formula $\varphi$ to a particular universe $V(\mathcal{A})$
over some structured set $\mathcal{A}=\langle A,a\rangle$. We do
this in the usual way via a recursive translation that restricts quantification
to $V(A)$ and translates $D$ as $A$ and $d$ as $a$.\footnote{See Chapter IV.8 in \citep{KunenST} for more details. However, the
underlying idea is to just use a (very simple) interpretation as it
is usually understood in first order logic \citep{Visser2006}.}
\begin{defn}
($\mathcal{V}^{*}$) Given a structured set $\mathcal{A}$, we say
that $\varphi$ is \emph{satisfied} by $\mathcal{A}$ if the relativization
of $\varphi$ to $V(\mathcal{A})$, $\varphi^{V(\mathcal{A})}$ is
true.\footnote{I should not that while I'm using the word ``satisfied'' we are
not defining a satisfaction relation in the sense that is familiar
from model theory. It is just a translation and it is done in the
metalanguage. This issue is a result of our lofty goals with respect
to definability. If we want to have proper class sized universes,
then we cannot define a satisfaction relation for them in this context.
This follows from Tarski's theorem on the undefinability of truth.
If such a satisfaction relation were definable, we would be able to
define truth for the empty structure $V(\emptyset)$ in $\mathcal{V}^{*}$,
which would in turn, mean that we could define a truth predicate for
$V$ in $V$. }
\end{defn}

To see how this works, let's consider return to our topological example
from earlier. In the language $\mathcal{L}_{\in}(D,d)$ we may axiomatize
topology as follows. Let $Top$ be the sentences of $\mathcal{L}_{\in}(D,d)$
which is the conjunction of the following statements: 
\begin{itemize}
\item $d\subseteq\mathcal{P}(D)$; 
\item $\emptyset,D\in d$; 
\item For all $X,Y\in d,$$X\cap Y\in d$; and
\item for all $X\subseteq d$, $\bigcup Z\in d$. 
\end{itemize}
It is then easy to see that: 
\begin{prop}
($\mathcal{V}^{*}$) If $\mathcal{A}=\langle A,a\rangle$ is a structured
set, then $\mathcal{A}$ satisfies $Top$ iff $\mathcal{A}$ is a
topology with a domain $A$ of atoms.
\end{prop}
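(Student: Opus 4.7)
The plan is a routine verification that the relativization of each conjunct of $Top$ to $V(\mathcal{A})$ recovers, one-for-one, the corresponding clause of Definition \ref{def:Top} with $D$ replaced by $A$ and $d$ replaced by $a$. The essential tool is Proposition \ref{prop:V*buildsV(X)}, which tells us that $\langle V(A),\in,A\rangle$ is a model of $ZFCU$ whose atoms are exactly $A$; this lets us appeal to the usual absoluteness of elementary set-theoretic operations between $V(\mathcal{A})$ and the ambient $\mathcal{V}^{*}$.

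First I would unwind the satisfaction clause. Under $\varphi\mapsto\varphi^{V(\mathcal{A})}$, quantifiers range over $V(A)$, the constant $D$ denotes $A$, and $d$ denotes $a$. Since every subset of $A$ appears already at $V_{1}(A)$, the set $\mathcal{P}(A)$ lies in $V_{2}(A)\subseteq V(A)$, and $\mathcal{P}(A)$ as computed inside $V(\mathcal{A})$ coincides with $\mathcal{P}(A)$ as computed externally. Similarly, $\emptyset$, binary intersection $X\cap Y$, and arbitrary union $\bigcup Z$ are absolute for the transitive cumulative hierarchy $V(A)$ whenever the arguments sit in $V(A)$.

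Using these absoluteness facts, the four conjuncts of $Top$ translate as follows: $d\subseteq\mathcal{P}(D)$ becomes $a\subseteq\mathcal{P}(A)$; $\emptyset,D\in d$ becomes $\emptyset,A\in a$; the binary intersection clause becomes closure of $a$ under $\cap$; and the final clause becomes closure of $a$ under $\bigcup Z$ for $Z\subseteq a$. These are precisely the defining conditions of Definition \ref{def:Top}, so both directions of the biconditional follow immediately: if $\mathcal{A}$ satisfies $Top$, reading the translated clauses off gives that $a$ is a topology on the set of atoms $A$; conversely, if $\langle A,a\rangle$ is a topology in the sense of Definition \ref{def:Top}, all the witnesses required for the translated axioms (namely $\emptyset$, $A$, $X\cap Y$, $\bigcup Z$) lie in $V(A)$, so the relativized axioms hold in $V(\mathcal{A})$.

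The only real (and quite modest) obstacle is the absoluteness bookkeeping: one has to check that $\mathcal{P}(A)$, $\emptyset$, finite intersections, and arbitrary unions of subcollections $Z\subseteq a$ are computed the same way inside $V(\mathcal{A})$ as in the surrounding $\mathcal{V}^{*}$. This is standard given that $V(A)$ is transitive in $\mathcal{V}^{*}$ and all the relevant witnesses occur at bounded levels of the hierarchy over $A$; no deeper structural work is needed.
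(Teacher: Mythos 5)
Your verification is correct and is exactly the routine absoluteness check the paper has in mind when it says the proposition is "easy to see" and omits the proof. The key points you identify---that every subset of $A$ (and of $a$) appears at a bounded level of $V(A)$, and that $\emptyset$, $\mathcal{P}(A)$, $\cap$, and $\bigcup$ are absolute for the transitive hierarchy over $A$---are precisely what makes the clause-by-clause translation go through.
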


Given that the relationship between standard presentations of mathematical
structures like a topologies $\langle X,\mathcal{T}\rangle$ are so
easy to adapt into $\mathcal{L}_{\in}(D,d)$ it will be convenient
to slightly abuse notation and write the more familiar $\langle X,\mathcal{T}\rangle$
-- rather than $\langle A,a\rangle$ -- to denote a structured satisfying
$Top$. We shall leave it to the reader to make the appropriate translations
from the ordinary language of mathematics into $\mathcal{L}_{\in}(D,d)$
below. Moreover, it will also be helpful to streamline our notation
so that we conflate theories and the classes of structures that satisfy
them. Thus, given a theory $T$ and a structured set $\mathcal{A}$,
we shall frequently write $\mathcal{A}\in T$ rather than $\mathcal{A}$
satisfies $T$. 

Returning to the example above, we see that it nicely illustrates
the power of structured sets for representing mathematical structures.
We noted earlier that there was no obvious way to describe a topology
using a theory in first order logic. The obvious problems were that
given a topology $\langle X,\mathcal{T}\rangle$, $\mathcal{T}$ is
not a relation or function on $\mathcal{T}$. Moreover, with just
$\mathcal{T}$ we don't have enough structure to be able to describe
what $\mathcal{T}$ should be like. One might think of adding symbols
to play the role of intersection and union. This works quite well
for closure under finite intersections, but the union clause is more
difficult. The union operator is intended to be able to take infinitely
many arguments while first order logic is only concerned with finitary
operations. These problems simply don't emerge with structured sets.
Given $\langle X,\mathcal{T}\rangle$ satisfying $Top$, it doesn't
matter than $\mathcal{T}$is not a subset of $X$, we just need it
to be, so to speak, built from $X$. Moreover, we don't need to include
intersection and union in the representation of the structure since
they are already available for use in the background universe $V(\mathcal{A})$
that surrounds $\mathcal{A}$. But perhaps most pleasingly of all,
the way in which we are axiomatizing a topology is exactly the way
it is done in a topology book. No revisions or coding is required.
It falls straight off the shelf and into our framework. 

One might also worry that our formulation of structured sets only
allows for a single piece of structure on a particular domain. Suppose
that we want to consider a topology $\langle X,\mathcal{T}\rangle$
endowed with some extra structure in the form of a metric $d$. This
is easily addressed. Let $TopMet$ be the conjunction of the following
statements that can be easily formulated in $\mathcal{L}_{\in}(D,d)$: 
\begin{itemize}
\item $d$ is an ordered pair of the form $\langle d_{0},d_{1}\rangle$; 
\item $\langle D,d_{0}\rangle$ satisfies $Top$; 
\item $d_{1}:D\times D\to\mathbb{R}$ such that: $d_{1}(x,x)=0$; $x\neq y\to d_{1}(x,y)>0$;
$d_{1}(x,y)=d_{1}(y,x)$; and $d_{1}(x,z)\leq d_{1}(x,y)+d_{1}(y,z)$. 
\end{itemize}
It should be easy to see that this technique can be generalized to
meet the needs of ordinary mathematics and more. It is also worth
noting that we used the term $\mathbb{R}$ in the theory above. This
should be understood as a defined term in $\mathcal{V}^{*}$ that
denotes the usual version of $\mathbb{R}$ that resides in the kernel
of $\mathcal{V}^{*}$. In general, we'll leave it to the reader to
make the minor translations required to strictly fit the theories
we describe below into our framework.

\subsection{Interpretations and Isomorphisms}

We now have a way of representing mathematical structures and a linguistic
means of articulating theories that determine classes of those structures.
We are almost ready to discuss interpretation and interdefinability.
However, there is a final bump under the carpet to be addressed. Recall
that, in contrast with the case of $Top$ and $Nei$, we were not
able to show that $Bool$ and $Stone$ were sticks equivalent. The
reason for this was that when we define a Stone space $\mathcal{S}$
over a Boolean algebra $\mathbb{B}$ in the standard way, we define
a domain for that space using the set of ultrafilters on $\mathbb{B}$.
An ultrafilter is a subset of $\mathbb{B}$ and thus, the new domain
will be a subset of $\mathcal{P}(\mathbb{B})$. Moreover, the standard
way to define a Boolean algebra $\mathbb{B}^{*}$ from $\mathcal{S}$
is to take its clopen subsets. These are subsets of $\mathcal{S}$'s
domain and thus, they are also subsets of $\mathcal{P}(\mathbb{B})$.
The resultant Boolean algebra $\mathbb{B}^{*}$ is isomorphic to $\mathbb{B}$
but they are not identical. In particular, if we had started with
$\mathbb{B}$ being a structured set of the form $\langle B,b\rangle$,
then $\mathbb{B}^{*}$ would have a domain that was not a set of atoms
but rather an element of $V_{2}(B)$. This means that $\mathbb{B}^{*}$
is not a structured set. The natural way to treat interpretation in
this context would be as a process taking us from structured sets
to structured sets. But here we have an example of a standard ``interpretation''
where the output is not a structured set. 

This poses a problem with a couple of choices. On the one hand, we
could just say the equivalence between $Bool$ and $Stone$ doesn't
fit out framework. On the other, we could try to make our framework
more flexible. The first option seems draconian and in tension with
our goals to provide a general account of interdefinability in mathematics.
Thus, we shall opt for the second, although we shall do so in a way
that makes the distinction between the $Bool$-$Stone$ and $Top$-$Nei$
pairs visible and more, this distinction will illustrate a pleasing
generalization of a standard distinction in the theory of relative
interpretation in first order logic; i.e., between bi-interpretability
and definitional equivalence.

In order to implement this extra flexibility, we propose a natural
weakening of structured sets that we call \emph{quasi-structured sets}.
They will also be ordered pairs of the form $\mathcal{B}=\langle B,b\rangle$
with a domain $B$ and structure $b$. However, in order to accommodate
cases like we encountered with $Bool$ and $Stone$, we shall not
demand that $B$ is set of atoms. Rather we shall place some technical
conditions on $B$ and $b$ in order that they function the same way
-- for our purposes -- as a domain of atoms with a structure placed
upon it. It will be helpful to make a few preliminary definitions
on the way to our final definition. 
\begin{defn}
($\mathcal{V}^{*}$) (1) For a set $B$, say that $d$ is \emph{below
$B$ }if $d\in trlc(e)$ for some $e\in B$. 

(2) We say that $p$ is a\emph{ path from $d$ to }$b$ if $p=\langle p_{0},...,p_{n}\rangle$
where $n\geq1$, $p_{0}=d$, $p_{n}=b$ and for all $i<n$, $p(i)\in p(i+1)$. 

(3) We say that a path $p=\langle p_{0},...,p_{n}\rangle$ from $d$
to $b$ \emph{passes through} $B$ if there is some $i\leq n$ such
that $p_{i}\in B$. 
\end{defn}

This makes it a little easier to define our weakening of structured
sets. This is probably the fussiest definition in this paper. It may
take a little while to get used to, but the underlying idea is very
simple. We want something close to structured sets that does not require
the use of a strict domain of atoms.
\begin{defn}
($\mathcal{V}^{*})$ We say that $\mathcal{B}=\langle B,b\rangle$
is a \emph{quasi-structured set} if: 
\begin{enumerate}
\item $B$ is such that: 
\begin{enumerate}
\item $\emptyset\notin trcl(B)$; 
\item for all $x,y\in B$, $x\notin trcl(y)$; 
\end{enumerate}
\item $b$ is such that:
\begin{enumerate}
\item if $d$ is an atom with $d\in trcl(\{b\})$, then $d\in trcl(B)$;
and
\item if $d\in trcl(\{b\})$ is below $B$, then every path $p$ from $d$
to $b$ passes through $B$.
\end{enumerate}
\end{enumerate}
\end{defn}

Let's try to describe the information motivation behind each of the
clauses. Clause 1(a) is intended to ensure that the domain, $B$,
does not overlap the kernel. Since every set in the kernel has $\emptyset$
at the bottom of its transitive closure, this condition suffices for
this purpose. Clause 1(b) is intended to avoid elements of the domain
from being tangled up with each other. We do this by ensuring that
no element of the domain is in the transitive closure of any other
element. Clause 2(a) is analogous to the clause 2 of Definition \ref{def:StrucSet}.
It is designed to ensure that the structure, $b$, is built up from
atoms in the domain and not any other atoms. Finally clause 2(b) is
intended to ensure that the structure $b$ respects $B$ as a genuine
domain. The clause aims to do this by making sure that $b$ is, so
to speak, built from elements of $B$ and not from elements below.
The following diagram is intended to assist with the interpretation
of clause 2(b). 

\begin{center}
\begin{tikzpicture}[     
node distance=1.2cm and 1.4cm,     
every node/.style={inner sep=6pt},     
->,     
>=latex,     
thick,     
shorten >=2pt,     
shorten <=0pt 
]
% Row 1 (top): b 
\node (b) {$\bold{b}$};
% Row 2: B, y_0, y_1, z_2 
%\node[below left=of b] (B) {}; 
\node[below left=of b] (y0) {$y_0$}; 
\node[left=of y0] (B2) {$\bold{B}$};
\node[below=of b] (y1) {$y_1$};
\node[below right= of b] (z2) {$z_2$};
% Row 3: b_0, b_1, b_2 
\node[below right=of B2] (b0) {$\mathsf{b_0}$}; 
\node[below=of y1] (b1) {$\mathsf{b_1}$}; 
\node[below right=of y1] (b2) {$\mathsf{b_2}$};
% Row 4 (bottom): x_0, x_1, z 
\node[below=of b0] (x0) {$x_0$}; 
\node[below=of b1] (x1) {$x_1$}; 
\node[below=of b2] (z0) {$z_0$};
\node[below right=of b2] (z1) {$z_1$};

% Arrows % y_0 and y_1 into b 
\draw (y0) -- (b); 
\draw (y1) -- (b);
% b_0, b_1 and b_2 into B 
\draw (b0) -- (B2); 
\draw (b1) -- (B2); 
\draw (b2) -- (B2);
% b_0 and b_1 into y_1 
\draw (b0) -- (y1); 
\draw (b1) -- (y1);
% b_0 into y_0 
\draw (b0) -- (y0);
% x_0 into b_0 and b_1 
\draw (x0) -- (b0); 
\draw (x0) -- (b1);
% x_1 into b_1 
\draw (x1) -- (b1);
% Dotted arrow from z to b 
\draw[dotted] (z0) -- (b);
\draw[dotted] (z1) -- (z2);
\draw[dotted] (z2) -- (b);
\draw (z1) -- (b2);
\draw (z0) -- (b2);
\draw (b2) -- (z2);
\end{tikzpicture}
\end{center}

The idea is that $\mathcal{B}=\langle B,b\rangle$ where $B=\{b_{0},b_{1},b_{2}\}$
is intended to represent a putative quasi-structure using a graph
where we have an arrow between vertices $u$ and $v$ iff $u\in v$.
Thus, we see that $b_{0},b_{1},b_{2}\in B$ and $b$ is canonical
ordered pair of $b_{0}$ and $b_{1}$. If we ignore the dotted arrows
and suppose that $x_{0},x_{1},z_{0}$ and $z_{1}$ are atoms, then
$\mathcal{B}$ is a quasi-structured set. With regard to clause 2(b),
we see that $x_{0}$ and $x_{1}$ are both below $B$, but every path
from $x_{0}$ and $x_{1}$ to $b$ passes through $B$. On the other
hand, if we didn't ignore the dotted arrows, $\mathcal{B}$ would
not be a quasi-structured set. While $z_{0}$ and $z_{1}$ are both
below $B$, neither of the paths $\langle z_{0},b\rangle$ and $\langle z_{1},z_{2},b\rangle$
pass through $B$. This motivation behind this definition is most
clearly illustrated with the following generalization of Proposition
\ref{prop:V*buildsV(X)}: 
\begin{prop}
($\mathcal{V}^{*}$) If $B$ is the domain of a quasi-structured set
$\mathcal{B}=\langle B,b\rangle$, then $\langle V(B),\in,B\rangle$
satisfies $ZFCU$.
\end{prop}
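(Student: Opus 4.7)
The plan is to extend the proof of Proposition~\ref{prop:V*buildsV(X)} to the weaker hypothesis that $B$ is merely the domain of a quasi-structured set. The interpretation is exactly as before: $@$ is interpreted as $B$ and $\mathfrak{S}$ as $V(B)\setminus B$. The strategy is to prove one key lemma that isolates the ``atom-like'' behaviour of elements of $B$ inside $V(B)$, and then verify the $ZFCU$ axioms by essentially the same rank-bounding arguments used when $B$ is literally a set of atoms.

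The key lemma is the following: for every $b\in B$ and every $w\in trcl(b)$, $w\notin V(B)$. I would prove this by $\in$-well-founded contradiction. Let $X=\{w\in V(B) : w\in trcl(b)\text{ for some }b\in B\}$ and suppose $X\neq\emptyset$; pick $z$ that is $\in$-minimal in $X$. If $z\in B$, then $z\in trcl(b)$ for some $b\in B$, and since $z,b\in B$ this directly contradicts clause 1(b) of the definition of quasi-structured set. Otherwise $z\in V(B)\setminus B$ is a set, and sits at some least level $V_{\alpha+1}(B)$, so $z\subseteq V_{\alpha}(B)$. If $z=\emptyset$, then $\emptyset\in trcl(B)$ contradicts clause 1(a). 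If $z\neq\emptyset$, any $z'\in z$ lies in $V_{\alpha}(B)\subseteq V(B)$ and in $trcl(b)$, placing $z'$ in $X$ strictly $\in$-below $z$ and contradicting minimality.

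With this lemma, the (Atoms) axiom is immediate: for $b\in B$, no element of $V(B)$ lies in $b$, so in $V(B)$ the ``atom'' $b$ has no members. The remaining axioms (Extensionality, Foundation, Pairing, Union, Powerset, Infinity, Separation, Replacement, Choice) are verified as in Proposition~\ref{prop:V*buildsV(X)}, using the observation that if $x\in V(B)\setminus B$ is a set then all $\mathcal{V}^{*}$-members of $x$ already lie in $V(B)$, so $V(B)$-membership between sets agrees with genuine $\in$. For Replacement one defines a $B$-rank ($\mathrm{rank}^{B}(z)=0$ for $z\in B$, and $\mathrm{rank}^{B}(z)=\sup\{\mathrm{rank}^{B}(w)+1 : w\in z\}$ for $z$ a set in $V(B)$) and bounds the image of a definable function using Replacement in the ambient $\mathcal{V}^{*}$.

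The main obstacle is the key lemma, since clauses 1(a) and 1(b) of the quasi-structured set definition are used nowhere else in the argument; the remaining axiom verifications are routine extensions of the standard cumulative-hierarchy argument already underlying Proposition~\ref{prop:V*buildsV(X)}.
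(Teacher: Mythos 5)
The paper states this proposition without proof (it is one of the ``obvious facts'' the author omits), so there is no official argument to compare against; your proposal is correct and is plainly the intended one. Your key lemma --- that $trcl(u)\cap V(B)=\emptyset$ for every $u\in B$, so that elements of $B$ are membership-invisible and hence atom-like inside $V(B)$ --- is exactly what clauses 1(a) and 1(b) of the definition of quasi-structured set are engineered to secure, your $\in$-minimal-counterexample proof of it is sound, and the remaining axiom verifications are the routine cumulative-hierarchy arguments you indicate.
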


This tells us that when we build universe of domains $B$ of quasi-structured
sets we get models of $ZFCU$ whose atoms are the domains we started
with. Thus, we again avoid the recreation problem and obtain a natural
hierarchy over a mathematical structure. Note, however, that just
because $B$ is a set of atoms relative to the $\langle V(B),\in B\rangle$
this does not mean that $B$ is a set of atoms in $\mathcal{V}^{*}$.
Indeed that is the point. 

Finally, we can describe how one theory may interpret another. First,
let us recall how constant symbols can be defined in a definitional
expansion of a theory. This will be the underlying mechanism for our
generalized notion of interpretation. If we are working a theory $T$,
we may introduce a new constant symbol, $t$, if there is some formula
$\varphi_{t}(x)$ in the language of $T$ such that $T$ proves that
there is a unique $x$ such that $\varphi_{t}(x)$. In such cases,
we shall write $t=x$ and we shall call $t$ a \emph{term}. For a
classic example from $ZFC$ and $ZFCU$, $\emptyset$ is a term. Our
interpretations will take the form of terms in universes $V(\mathcal{A})$
over quasi-structured sets $\mathcal{A}$. We can describe them now
as follows:
\begin{defn}
($\mathcal{V}^{*}$) Let $\tau_{d}(x)$ and $\tau_{s}(x)$ be formulae
of $\mathcal{L}_{\in}(D,d)$. We say that they form a $T$-interpretation,
$t$, if whenever $\mathcal{A}$ is a quasi-structured set satisfying
$T$, then the following statements are satisfied in $V(\mathcal{A})$; 
\begin{itemize}
\item there is a unique $B$ such that $\tau_{d}(B)$; 
\item there is a unique $b$ such that $\tau_{s}(b$); and
\item $\mathcal{B}=\langle B,b\rangle$ is a quasi-structured set.
\end{itemize}
Thus $\tau_{d}$ and $\tau_{s}$ determine terms $t_{d}$ and $t_{s}$
over $T$ such that $t_{d}=B$ iff $\tau_{d}(B)$ and $t_{s}=b$ iff
$\tau_{s}(b)$. We then say that $t=\langle t_{s},t_{d}\rangle$ is
a $T$\emph{-interpretation} and we write $t(\mathcal{A})$ to denote
$\mathcal{B}$.\label{def:T-interpt}
\end{defn}

Informally speaking, we can think of the terms $t_{d}$ as delivering
a new domain and $t_{s}$ as delivering a structure on that domain.
We may then describe what it means for one theory to interpret another.
The following diagram is intended to provide a rough illustration
of the situation described in the definition above.

\begin{center} 
\begin{tikzpicture}   
% Draw large cone   
% Top and bottom ellipses   
\draw[thick] (0,3) ellipse (2 and 0.3);   
\draw[thick] (0,0) ellipse (1.2 and 0.25);      

% Connect with lines to form the cone shape   
\draw[thick] (-2,3) -- (-1.2,0);   
\draw[thick] (2,3) -- (1.2,0);      

% Vertical line from center of large bottom ellipse   
\draw[thick] (0,0) -- (0,0.5);      

% Triangle on top of the line for large cone   
\draw[thick] (0,0.85) -- (-0.2,0.5) -- (0.2,0.5) -- cycle;      

% Draw small cone inside (same top height, larger dimensions)   
% Top and bottom ellipses for small cone   
\draw[thick] (0,3) ellipse (1.4 and 0.21);   
\draw[thick] (0,1.2) ellipse (0.84 and 0.175);      

% Connect with lines to form the small cone shape   
\draw[thick] (-1.4,3) -- (-0.84,1.2);   
\draw[thick] (1.4,3) -- (0.84,1.2);      

% Vertical line from center of small bottom ellipse   
\draw[thick] (0,1.2) -- (0,1.8);      

% Square on top of the line for small cone   
\draw[thick] (-0.15,1.8) rectangle (0.15,2.1);     

% Labels   
\node at (2.5,3) {$\mathcal{V}(\mathcal{A})$};   
\node at (1.5,0) {$\mathcal{A}$};   
\node at (-0.26,0.8) {$a$};  
\node at (-0.8,0.38) {$A$};  

% Label above the large bottom ellipse
\node at (0.7,2.45) {$\mathcal{V}(\mathcal{B})$}; 
\node at (1.1,1.2) {$\mathcal{B}$};  
\node at (-0.3,1.97) {$b$};  
\node at (-0.65,1.52) {$B$};   

% Label above the small bottom ellipse 
\end{tikzpicture}
\end{center}
\begin{defn}
($\mathcal{V}^{*}$) We say that $T$ interprets $S$ if there is
a $T$-interpretation $t$ such that for all structured sets $\mathcal{A}$
satisfying $T$, $t(\mathcal{A})$ satisfies $S$. We abbreviate this
by writing $t:T\to S$.
\end{defn}

This is directly analogous to the standard notion of interpretation
for models of first order logic. Indeed, it's a little easier to state
since we have a simple common language for structured sets. 

We are just about to describe our promised interdefinability relations,
but first, we have one more task to accomplish. We need to define
a suitable notion of isomorphism between quasi-structured sets. Once
this is done, the final definitions will be almost trivial. We start
by defining what we call the \emph{lift} of a function between the
domains of a pair of quasi-structured sets. The key point is that
there is enough information in the action of a function on the domain
to determine its behavior on the structure above it. First let us
say that the \emph{field} of a quasi-structure $\mathcal{A}=\langle A,a\rangle$,
abbreviated $field(\mathcal{A})$, is $(A\cup trcl(\{a\}))\cap V(A)$.
Intuitively speaking, $field(\mathcal{A})$ is the collection of sets
that occur, so to speak, in between (and including) $A$ and $a$. 
\begin{defn}
($\mathcal{V}^{*}$) Let $\mathcal{A}=\langle A,a\rangle$ and $\mathcal{B}=\langle B,b\rangle$
be quasi-structured sets and suppose that $f:A\to B$. The \emph{lift}
of $f$ to $f_{\mathcal{A}}^{+}:field(\mathcal{A})\to field(\mathcal{B})$
is defined by $\in$-recursion on $field(\mathcal{A})$ so that for
all $x\in field(\mathcal{A})$\label{def:lift}
\[
f_{\mathcal{A}}^{+}(x)=\begin{cases}
x & \text{if }x\in V\\
f(x) & \text{if }x\in A\\
\{f_{\mathcal{A}}^{+}(y)\ |\ y\in x\} & \text{otherwise.}
\end{cases}
\]
\end{defn}

Using the lift we may then define an isomorphism between quasi-structured
sets as follows: 
\begin{defn}
($\mathcal{V}^{*}$) Given quasi-structured sets $\mathcal{A}=\langle A,a\rangle$
and $\mathcal{B}=\langle B,b\rangle$, we say that $f:A\to B$ is
an isomorphism, abbreviated $f:\mathcal{A}\cong\mathcal{B}$, if $f$
is a bijection and $f_{\mathcal{A}}^{+}(a)=b$.\label{def:auto}
\end{defn}

The definition is very simple and perhaps looks too simple to be an
appropriate definition for isomorphism. However, the key point is
that the structure $a$ can be very complex entity in relation to
$A$. In particular, the sets contained in the field of $\mathcal{A}$
can be made to do very intricate work. To illustrate a classic case
of this, observe that: 
\begin{prop}
($\mathcal{V}^{*}$) Suppose $\langle X,\mathcal{T}\rangle$ and $\langle Y,\mathcal{S}\rangle$
are quasi-structured sets satisfying $Top$; i.e., they are topologies
with domains consisting of atoms. Then $\langle X,\mathcal{T}\rangle$
and $\langle Y,\mathcal{S}\rangle$ are homeomorphic iff they are
isomorphic as structured sets. 
\end{prop}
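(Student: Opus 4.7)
The plan is to unpack both notions down to a common statement about how $f$ acts on opens. Both sides already share the requirement that $f : X \to Y$ be a bijection, so the content lies in the translated structural condition $f_{\mathcal{A}}^{+}(\mathcal{T}) = \mathcal{S}$ versus the classical homeomorphism condition $\{f[U]\ |\ U \in \mathcal{T}\} = \mathcal{S}$. The strategy is therefore to compute $f_{\mathcal{A}}^{+}(\mathcal{T})$ explicitly using Definition \ref{def:lift}, and show it equals $\{f[U]\ |\ U \in \mathcal{T}\}$.

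First I would verify that $\mathcal{T}$, every $U \in \mathcal{T}$, and every point $x \in U$ lie in $\mathit{field}(\mathcal{A})$; this is immediate since $\mathcal{T}$ is in $trcl(\{\mathcal{T}\})$, each $U \in \mathcal{T}$ is a member of $\mathcal{T}$, and each point of $U$ is an atom from $X$. Then I would evaluate the lift layer by layer. Because $\mathcal{T}$ hereditarily contains atoms of $X$, it is not a pure set in the kernel $V$ and it is not itself an atom; so the third clause of Definition \ref{def:lift} gives
\[
f_{\mathcal{A}}^{+}(\mathcal{T}) = \{\,f_{\mathcal{A}}^{+}(U)\ |\ U \in \mathcal{T}\,\}.
\]
Each open set $U \in \mathcal{T}$ is a subset of $X$ but not itself an atom and not a pure set, so the same clause yields $f_{\mathcal{A}}^{+}(U) = \{f_{\mathcal{A}}^{+}(x)\ |\ x \in U\}$. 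Finally, for $x \in U \subseteq X$ the atom clause of the lift gives $f_{\mathcal{A}}^{+}(x) = f(x)$. Chaining these three steps,
\[
f_{\mathcal{A}}^{+}(\mathcal{T}) = \{\,\{f(x)\ |\ x \in U\}\ |\ U \in \mathcal{T}\,\} = \{\,f[U]\ |\ U \in \mathcal{T}\,\}.
\]

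With this identity in hand the proposition follows at once. For the forward direction, if $f$ is a homeomorphism then $f$ is a bijection and $\{f[U]\ |\ U \in \mathcal{T}\} = \mathcal{S}$, so by the computation $f_{\mathcal{A}}^{+}(\mathcal{T}) = \mathcal{S}$, witnessing the isomorphism of structured sets in the sense of Definition \ref{def:auto}. Conversely, if $f : \mathcal{A} \cong \mathcal{B}$ as structured sets then $f$ is a bijection on domains and the same calculation shows $\{f[U]\ |\ U \in \mathcal{T}\} = \mathcal{S}$; since $f$ is bijective this forces $f^{-1}[V] \in \mathcal{T}$ for each $V \in \mathcal{S}$ as well, so $f$ is a homeomorphism.

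The main obstacle is almost entirely bookkeeping: correctly identifying which clause of the recursive definition of the lift applies at each level of $\mathcal{T}$, and confirming that the relevant sets sit in $\mathit{field}(\mathcal{A})$ so the recursion is well-defined. Once those routine checks are made the equivalence is essentially a definitional unpacking, which is precisely why this is a good illustration of the claim — made earlier in the paper — that a set-like representation of structure lets the textbook argument survive verbatim inside the $\mathcal{V}^{*}$-framework.
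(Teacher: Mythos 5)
Your proof is correct, and since the paper deliberately omits the argument as one of its "obvious facts," your definitional unpacking of the lift — showing $f_{\mathcal{A}}^{+}(\mathcal{T})=\{f[U]\ |\ U\in\mathcal{T}\}$ and then identifying that condition with the homeomorphism condition for bijections — is exactly the intended reasoning. The only cosmetic quibble is that $\emptyset\in\mathcal{T}$ is a pure set handled by the kernel clause rather than the recursive clause, but both clauses return $\emptyset=f[\emptyset]$ there, so nothing breaks.
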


Having a notion is isomorphism available, also allows us to illustrate
how our notion of interpretation generalizes the theory of relative
interpretation in first order logic. To see this, recall that in first
order logic, if we have a function $t:mod(T)\to mod(S)$ between first
order models of theory $T$ and models of theory $S$, then whenever
$\mathcal{M}\cong\mathcal{N}$ are models of $T$, then we also have
$t(\mathcal{M})\cong t(\mathcal{N})$. We might say that first order
interpretations preserves isomorphism. It is easy to see that this
is also the case for our generalized notion of interpretation. 
\begin{prop}
If $t:T\to S$ is an interpretation, then for all $\mathcal{A}\cong\mathcal{B}\in T$,
$t(\mathcal{A})\cong t(\mathcal{B})$. 
\end{prop}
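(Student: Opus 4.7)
The plan is to exploit the fact that an isomorphism $f \colon \mathcal{A} \cong \mathcal{B}$, once lifted, induces a structure-preserving bijection between the ambient universes $V(\mathcal{A})$ and $V(\mathcal{B})$, and hence preserves whatever the interpretation formulae pick out inside them.

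First I would extend the lift from Definition \ref{def:lift} beyond $\mathit{field}(\mathcal{A})$ to all of $V(\mathcal{A})$ by the evident $\in$-recursion: kernel sets go to themselves, atoms in $A$ go via $f$, and any other set $z$ goes to $\{f_\mathcal{A}^{+}(w) \mid w \in z\}$. Writing $f^{+}$ for this extension and similarly $(f^{-1})^{+}$ for the extension of the inverse, a routine simultaneous $\in$-induction shows that $(f^{-1})^{+} \circ f^{+}$ is the identity on $V(\mathcal{A})$ (and symmetrically), so $f^{+}$ is a bijection $V(\mathcal{A}) \to V(\mathcal{B})$ carrying $\in$ to $\in$, $A$ to $B$, and by hypothesis $a$ to $b$. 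In other words, $f^{+}$ is an isomorphism of the $\mathcal{L}_\in(D,d)$-structures $V(\mathcal{A})$ and $V(\mathcal{B})$.

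Next, a standard induction on formula complexity gives that for any $\mathcal{L}_\in(D,d)$-formula $\varphi(v_1,\dots,v_n)$ and any $z_1,\dots,z_n \in V(\mathcal{A})$,
\[
V(\mathcal{A}) \models \varphi[z_1,\dots,z_n] \ \Longleftrightarrow\ V(\mathcal{B}) \models \varphi[f^{+}(z_1),\dots,f^{+}(z_n)].
\]
Applying this to the defining formulae $\tau_d$ and $\tau_s$ of the $T$-interpretation $t$ (Definition \ref{def:T-interpt}), and invoking their uniqueness clauses inside each of $V(\mathcal{A})$ and $V(\mathcal{B})$, we conclude that if $t(\mathcal{A}) = \langle X, x\rangle$ and $t(\mathcal{B}) = \langle Y, y\rangle$, then $f^{+}(X) = Y$ and $f^{+}(x) = y$.

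It remains to package this into an isomorphism $t(\mathcal{A}) \cong t(\mathcal{B})$ in the sense of Definition \ref{def:auto}. I would set $g = f^{+}\!\upharpoonright\! X$; bijectivity onto $Y$ is immediate from the previous paragraph. To verify $g_{t(\mathcal{A})}^{+}(x) = y$, show by $\in$-induction on $\mathit{field}(t(\mathcal{A}))$ that $g_{t(\mathcal{A})}^{+}$ and $f^{+}$ agree there: the two definitions match on the kernel and on $X$, and the recursive clause then propagates agreement upward. Applied to $x$, this gives $g_{t(\mathcal{A})}^{+}(x) = f^{+}(x) = y$, as required.

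The main obstacle is bookkeeping rather than conceptual: one must be careful that the extended $f^{+}$ is well-defined and bijective on all of $V(\mathcal{A})$, and that the elements of $X$ — which need not be atoms of $\mathcal{V}^{*}$ — nevertheless behave as atoms relative to the submodel $\langle V(X), \in, X\rangle$, so that the recursion defining $g_{t(\mathcal{A})}^{+}$ genuinely coincides with the restriction of $f^{+}$ to $\mathit{field}(t(\mathcal{A}))$. Once the quasi-structured-set clauses are used to rule out unwanted tangling below $X$, the induction goes through cleanly.
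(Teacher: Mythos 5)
Your proposal is correct and follows exactly the route the paper intends: the paper omits an explicit proof but immediately afterwards introduces the full lift $f^{*}:V(\mathcal{A})\to V(\mathcal{B})$ and the lemma that $f^{*}$ preserves satisfaction of $\mathcal{L}_{\in}(D,d)$-formulae, which is precisely the machinery you deploy on $\tau_{d}$ and $\tau_{s}$. Your closing care about showing that the lift of $f^{*}\!\upharpoonright\! X$ relative to $t(\mathcal{A})$ agrees with $f^{*}$ on $field(t(\mathcal{A}))$ is the one step the paper leaves entirely silent, and you handle it correctly via the quasi-structured-set clauses.
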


Before we give our main definitions, we observe a crucial fact about
these isomorphisms that will allow us to generalize the proof technique
used for Proposition \ref{prop:protoAuto}. First, we observe that
whenever $f:\mathcal{A}\cong\mathcal{B}$ we can lift the bijection
$f:A\to B$ not just to the $field(\mathcal{A})$, but to the entirety
of $V(\mathcal{A})$. To see this, we simply continue lifting $f$
beyond $f^{+}$ by deploying the same technique as used in Definition
\ref{def:lift}. We denote this function as $f^{*}:V(\mathcal{A})\to V(\mathcal{B})$
and observe that it gives a full isomorphism between the universes
associated with $\mathcal{A}$ and $\mathcal{B}$. Of course, $f^{*}$
is a proper class and not a set, which makes it inconvenient for some
purposes. However, the following lemma is very helpful indeed. 
\begin{lem}
Let $\mathcal{A}$ and $\mathcal{B}$ be structured sets and $f:A\to B$
be a bijection witnessing that $f:\mathcal{A}\cong\mathcal{B}$. Then
for all formula $\varphi(x_{0},...,x_{n})$ from $\mathcal{L}_{\in}(D,d)$
and objects $c_{0},...,c_{n}$ from $V(A)$ we have 
\[
V(\mathcal{A})\models\varphi(c_{0},...,c_{n})\ \Leftrightarrow\ V(\mathcal{B})\models\varphi(f^{*}(c_{0}),...,f^{*}(c_{n})).
\]
\end{lem}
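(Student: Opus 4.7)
The plan is to prove this by induction on the complexity of $\varphi$, after first extending the lift $f_{\mathcal{A}}^{+}$ of Definition \ref{def:lift} past the field to a global bijection $f^{*}: V(A) \to V(B)$ by $\in$-recursion, using exactly the same clauses: $f^{*}(x) = x$ for $x$ in the kernel, $f^{*}(x) = f(x)$ for $x \in A$, and $f^{*}(x) = \{f^{*}(y) \mid y \in x\}$ otherwise. The heart of the argument is that $f^{*}$ is an $\in$-isomorphism from $V(A)$ onto $V(B)$ that sends the interpretation of each constant of $\mathcal{L}_{\in}(D,d)$ in $V(\mathcal{A})$ to its interpretation in $V(\mathcal{B})$.

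Before starting the induction I would verify four properties of $f^{*}$. First, $f^{*}$ is a bijection from $V(A)$ onto $V(B)$: injectivity follows by $\in$-induction using injectivity of $f$ and Extensionality in $V(\mathcal{B})$, and surjectivity follows by an analogous $\in$-induction using that $f^{-1}$ lifts in the same way. Second, $f^{*}$ preserves membership in both directions, which is immediate from the recursive clause. Third, $f^{*}(A) = B$ since $A$ is set of atoms and $f$ is a bijection $A \to B$. Fourth, $f^{*}(a) = b$, which is precisely the hypothesis $f_{\mathcal{A}}^{+}(a) = b$ from Definition \ref{def:auto} together with the fact that the global $f^{*}$ agrees with $f_{\mathcal{A}}^{+}$ on $\mathrm{field}(\mathcal{A})$.

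With these in hand, the induction is routine. For atomic formulas: $c_{i} = c_{j}$ iff $f^{*}(c_{i}) = f^{*}(c_{j})$ (injectivity), $c_{i} \in c_{j}$ iff $f^{*}(c_{i}) \in f^{*}(c_{j})$ (membership preservation), $c = D$ iff $c = A$ iff $f^{*}(c) = B$ iff $f^{*}(c) = D^{V(\mathcal{B})}$, and likewise for $d$ using $f^{*}(a) = b$. The Boolean cases are immediate. For $\exists x\, \psi$, the relativized quantifier ranges over $V(A)$ on one side and $V(B)$ on the other, and the surjectivity of $f^{*}$ onto $V(B)$ is exactly what is needed to pass back and forth.

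The only genuinely delicate point is the well-definedness of the global $f^{*}$, in particular confirming that the recursion is unambiguous on sets in $V(A)$ that lie neither in $A$ nor in the kernel but whose transitive closure mixes atoms of $A$ with pure sets. The clauses of the recursion are mutually compatible because clause 1(a) of the quasi-structured-set definition prevents $A$ from intersecting the kernel, so the case split is clean; and $\in$-induction on $V(A)$ then shows $f^{*}$ is total, with the pure-set clause forcing $f^{*}$ to fix the kernel pointwise. Once this is checked, everything else is bookkeeping.
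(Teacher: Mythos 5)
Your proof is correct and follows exactly the route the paper intends: the paper omits a formal proof but the surrounding text defines $f^{*}$ precisely as you do (continuing the lift of Definition \ref{def:lift} to all of $V(\mathcal{A})$), asserts that it is a full isomorphism of the universes, and relies on the same induction-on-formula-complexity argument already flagged in the proof of Proposition \ref{prop:protoAuto}. Your additional care about the well-definedness of the case split and the preservation of the constants $D$ and $d$ fills in exactly the details the paper leaves to the reader.
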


In the particular case where $\mathcal{A}$ is $\mathcal{B}$ and
thus, $f$ is an automorphism, we see that while $f^{*}$ is often
able to move elements of $V(\mathcal{A})$, properties definable by
formulae like $\varphi$ are not affected by $f^{*}$. This will be
very useful below.

\subsection{Definitional equivalence and Bi-interpretability\label{subsec:Definitional-equivalence-and}}

Finally, we can define our preferred interdefinability relations.
Like the notion of interpretability described above, they very obviously
generalize equivalence relations used when comparing theories in first
order logic. 
\begin{defn}
($\mathcal{V}^{*}$) Say that theories $T$ and $S$ are \emph{definitionally
equivalent }if there are interpretations $t:T\leftrightarrow S:s$
such that:
\begin{itemize}
\item $s\circ t(\mathcal{A})=\mathcal{A}$ for all $\mathcal{A}\in T$;
and
\item $t\circ s(\mathcal{B})=\mathcal{B}$ for all $\mathcal{B}\in S$. 
\end{itemize}
\end{defn}

Note that aside from the shift in logical framework, this is exactly
the same as ordinary definitional equivalence in first order logic.
\begin{defn}
Say that $T$ and $S$ are \emph{bi-interpretable} if there exist
interpretations $t:T\leftrightarrow S:s$ and functions $\eta$ and
$\nu$ that are uniformly definable over $T$ and $S$ respectively
such that:
\begin{itemize}
\item $\eta^{V(\mathcal{A})}:\mathcal{A}\cong s\circ t(\mathcal{A})$ for
all $\mathcal{A}\in T$; and
\item $\nu^{V(\mathcal{B})}:\mathcal{B}\cong t\circ s(\mathcal{B})$ for
all $\mathcal{B}\in S$.
\end{itemize}
\end{defn}

Again, this is just the usual definition of bi-interpretability adapted
to the $\mathcal{V}^{*}$ -framework. We then note that if $T$ and
$S$ are definitionally equivalent, they are also (vacuously) bi-interpretable. 

Of course, these are proper generalizations of the ordinary notions
of definitional equivalence and bi-interpretability, so some care
about nomenclature is required. Since this is a paper about the $\mathcal{V}^{*}$-framework,
we'll depart from convention and use the words of the definitions
above.\footnote{Outside the context of this paper, it might be helpful to say that
theories are $\mathcal{V}^{*}$-definitionally equivalent and $\mathcal{V}^{*}$-bi-interpretable.} To avoid confusion, we'll then be careful to note when we are talking
the standard versions of these definitions that work in the context
of first order logic.

Before we move onto applications of the framework, it will be helpful
to observe a useful fact about definitionally equivalent theories
that also generalizes the standard first order characterization.
\begin{defn}
Let us say that an interpretation $t=\langle t_{d},t_{s}\rangle:T\to S$
\emph{preserves domains} if whenever $\mathcal{A}=\langle A,a\rangle\in T$,
then $t_{d}^{V(\mathcal{A})}=A$.
\end{defn}

\begin{prop}
If $t:T\leftrightarrow S:s$ are interpretations witness definitional
equivalence, then $t$ and $s$ preserve domains.
\end{prop}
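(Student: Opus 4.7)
The plan is to prove that $t_d^{V(\mathcal{A})} = A$ for any $\mathcal{A} = \langle A, a\rangle \in T$; the statement for $s$ will follow by an identical argument. Write $B := t_d^{V(\mathcal{A})}$ and $\mathcal{B} = t(\mathcal{A}) = \langle B, b\rangle$. Definitional equivalence gives $s(\mathcal{B}) = \mathcal{A}$, and in particular $s_d^{V(\mathcal{B})} = A$. From these two defining equations fall out the two pivotal containments that drive the entire argument: $B \in V(A)$ (since $B$ is the value of a term of $V(\mathcal{A})$) and $A \in V(B)$ (since $A$ is the value of a term of $V(\mathcal{B})$). My whole task is to upgrade this mutual containment to the equality $A = B$.

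For the inclusion $A \subseteq B$, I use that $\mathcal{A}$ is a structured set, so every $a_i \in A$ is a $\mathcal{V}^*$-atom. In order for $\mathcal{A}$ to function as the genuine output of $s$ inside $V(\mathcal{B})$---in particular for the domain $A$ to be a bona fide set rather than an atom of $V(\mathcal{B})$---the set $A$ must appear at some strictly positive level $V_\alpha(B)$ with $\alpha \geq 1$, and therefore $A \subseteq V(B)$. But any $\mathcal{V}^*$-atom appearing in $V(B)$ must already sit in the base layer $B$, because every element of $V(B) \setminus B$ is a set added by the powerset step and so cannot be an atom. Hence each element of $A$ lies in $B$.

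For the reverse inclusion I argue by contradiction. Suppose $b_0 \in B \setminus A$. Since $A$ is a set of $\mathcal{V}^*$-atoms, the hierarchy $V(A)$ is transitive in $\mathcal{V}^*$, so $B \in V(A)$ forces $b_0 \in V(A)$; and since $b_0 \notin A$, $b_0$ must be a set rather than a $\mathcal{V}^*$-atom. Now I invoke clause 1(b) of the quasi-structured-set conditions on $\mathcal{B}$: for every $a_i \in A \subseteq B$, both $a_i$ and $b_0$ lie in $B$, so $a_i \notin \mathrm{trcl}(b_0)$. Combined with the fact that $b_0 \in V(A)$ forces every $\mathcal{V}^*$-atom of $\mathrm{trcl}(b_0)$ to lie in $A$, we conclude that $\mathrm{trcl}(b_0)$ contains no $\mathcal{V}^*$-atoms at all. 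Hence $b_0$ is hereditarily pure, so either $b_0 = \emptyset$ (placing $\emptyset$ in $B \subseteq \mathrm{trcl}(B)$) or $\emptyset \in \mathrm{trcl}(b_0) \subseteq \mathrm{trcl}(B)$; either way, clause 1(a) on $\mathcal{B}$ is violated.

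The main obstacle is the bookkeeping of what \emph{atom}, \emph{set}, and transitive closure mean as one passes between $\mathcal{V}^*$, $V(\mathcal{A})$, and $V(\mathcal{B})$. The substantive content of the proof is concentrated in two small but essential observations: first, that $V(A)$ is transitive inside $\mathcal{V}^*$ whenever $A$ is a set of atoms, which is what lets me identify in-universe with ambient membership and transitive closure; and second, that clauses 1(a) and 1(b) on $\mathcal{B}$ dovetail precisely to rule out the rogue hereditarily pure elements of $B$ that the inclusion $B \in V(A)$ would otherwise permit.
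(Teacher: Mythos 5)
Your argument is essentially correct, and it is worth saying up front that the paper does not actually prove this proposition: it offers only the one-sentence intuition that a round trip returning to exactly where it started cannot afford to lose an element of the domain. You have supplied the missing mathematics, and the route you take is the right one. The real content of your proof --- that the mutual memberships $B\in V(A)$ and $A\in V(B)$ are not enough on their own, and that it is precisely clauses 1(a) and 1(b) of the quasi-structured-set definition that exclude the residual pure-set elements of $B$ --- is a genuine observation that the paper's gloss conceals. Your use of the transitivity of $V(A)$ (available because $A$ is a set of atoms) and of the fact that the only atoms occurring as elements of some level of $V(B)$ are the elements of $B$ itself are both sound, and the contradiction with 1(a) via a hereditarily pure $b_{0}$ is clean.

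There is one step you assert rather than prove, and it is the only place where the argument could leak: ruling out the possibility that $A\in B$, i.e.\ that $A$ sits at level $V_{0}(B)=B$ and is therefore an \emph{atom} of the universe $V(\mathcal{B})$. This case is not idle --- for instance $A=\{u\}$ and $B=\{\{u\}\}$ satisfy both $B\in V(A)$ and $A\in V(B)$ together with the quasi-domain clauses, yet $A\not\subseteq B$ --- so your whole first inclusion really does hinge on excluding it. Your justification is that $A$ must be ``a bona fide set rather than an atom of $V(\mathcal{B})$,'' which is morally right but leans on something the definitions never quite say: as written, the quasi-structured-set conditions on a domain ($\emptyset\notin trcl(\cdot)$, no mutual membership among elements) could be satisfied vacuously by an internal atom. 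To close this you should say explicitly that the clause ``$\langle s_{d},s_{s}\rangle$ is a quasi-structured set'' of Definition \ref{def:T-interpt} is evaluated inside $V(\mathcal{B})$, where an element of $B$ is an atom with no members, so a nonempty domain of atoms of $\mathcal{V}^{*}$ cannot be denoted by $s_{d}$ at level $0$; the degenerate case $A=\emptyset$ is then handled separately ($\emptyset\in B$ would violate 1(a), and $\emptyset\subseteq B$ anyway). One further caveat: you silently assume $\mathcal{A}$ is a genuine structured set (domain of atoms), whereas the round trip forces you to apply the same reasoning to $s(\mathcal{B})$ for arbitrary $\mathcal{B}\in S$, which is in general only quasi-structured; the paper is itself inconsistent on this point, but your ``identical argument'' for $s$ is not quite identical and deserves a remark.
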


We leave the proof to the reader, but the idea is very simple. Just
as in the first order logic case, if we are to translate back and
forth and end up exactly where we started, then neither of those interpretations
can afford to lose an element of the domain. 

In the next section, we'll start applying this framework to a representative
collection of examples, but before we do this, let's do a quick stock
take. After doing a lot of quite technical work, I hope the reader
sees that we've ended up in a relatively familiar place. In first
order logic, we interpret theories by defining models of the target
theory over models of some base theory. In the framework above, we
interpret theories by defining structured sets in the target theory
over universes generated from structures in the base theory. If we
are willing to grant these generous definability resources, then I
submit that this gives us a reasonable account of interdefinability
that satisfies the desiderata set out at the beginning of this paper.
Thus, as we shall see, we can harness the power of the category theoretic
approach to theories of complex mathematical structures while maintaining
a plausible story about interdefinability. Let's put it to work!

\section{Applications\label{sec:Applications}}

In this final, quite large section, our goal is to take the framework
outlined above for test drive that will demonstrate its many strengths
and also some of its limitations. Our main objectives will be to show
that standard equivalence proofs can be torn right out the book, while
inequivalence is far more common and happens for, what we might think
are, the right reasons.

This final part of the paper is the longest and so warrants its own
little overview. We begin in Section \ref{subsec:Equivalence} with
a short overview of some equivalence results. These are very straightforward
and, as such, we do not spend much time here. In Section \ref{subsec:Inequivalence},
we begin the more difficult task of establishing inequivalence between
theories. This is where the naive approaches of Section \ref{sec:1}
fell down. Here, we set out a basic methodology based on automorphisms
for establishing that theories fail to be definitionally equivalent
or bi-interpretable. We begin with our simple example of $Set1$ and
$Set2$ from Section \ref{subsec:Counterintuitive-Cantor-Bernstei}
and then consider some more interesting cases outside the reach of
the first order theory of interpretation. In Section \ref{subsec:Counterintuitive-Cantor-Bernstei},
we consider a more natural pair of theories that are not bi-interpretable
and show that by introducing more structure to those theories they
can become definitionally equivalent. We then discuss how this relates
to the distinction between material and structural set theory. Sections
\ref{subsec:Theories-of-rigid} and \ref{subsec:Comparing-particular-rigid}
then deal with theories where the automorphism methodology breaks
down: theories of rigid structures. We show that forcing can help
when comparing theories of multiple rigid structures, but in the case
of particular rigid structure, the $\mathcal{V}^{*}$-framework hits
a wall. Section \ref{subsec:Comparing-particular-rigid} aims to precisely
delineate this limitation and concludes in Section \ref{subsec:What-can-we-do?},
by considering how the $\mathcal{V}^{*}$-framework could be simply
revised to address this.

\subsection{Equivalence\label{subsec:Equivalence}}

Let's return to where we started. Clearly both $Top$ and $Nei$,
from Definitions \ref{def:Top} and \ref{def:Nei}, can be articulated
as theories in $\mathcal{L}_{\in}(D,d)$. It is then easy to see that:
\begin{prop}
$Top$ and $Nei$ are definitionally equivalent. 
\end{prop}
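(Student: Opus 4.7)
The plan is to promote the proof sketch of Proposition \ref{prop:TopNei} essentially verbatim into the $\mathcal{V}^*$-framework. The functions $t$ and $s$ displayed there can be read directly as $\mathcal{L}_\in(D,d)$-terms. Let $\tau^t_d(X)$ say ``$X = D$'' and let $\tau^t_s(\mathcal{N})$ say ``$\mathcal{N}$ is the unique function on $D$ with $\mathcal{N}(y) = \{Z \subseteq D \mid \exists W \in d\ (y \in W \subseteq Z)\}$''; symmetrically, let $\tau^s_d(X)$ say ``$X = D$'' and let $\tau^s_s(\mathcal{T})$ say ``$\mathcal{T} = \{Y \subseteq D \mid \forall z \in Y\ Y \in d(z)\}$''. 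By $Top$ and $Nei$ respectively these descriptions define objects uniquely in $V(\mathcal{A})$, and the resulting pairs $\langle D, \mathcal{N}\rangle$ and $\langle D, \mathcal{T}\rangle$ inherit the atom-hood of $D$ from $\mathcal{A}$, so they are honest structured sets. Verifying in $V(\mathcal{A})$ that the axioms of $Nei$ (resp. $Top$) hold of the defined pairs is exactly the routine content of the textbook proof, so these formulas give genuine interpretations $t\colon Top \to Nei$ and $s\colon Nei \to Top$ in the sense of Definition \ref{def:T-interpt}.

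Next I would verify the two identity conditions for definitional equivalence. Both $t$ and $s$ preserve the domain (by design, $D$ is returned unchanged), so it suffices to check that the structure components compose to the identity. For $\mathcal{A} = \langle X, \mathcal{T}\rangle \in Top$, letting $\mathcal{T}' = \{Y \subseteq X \mid \forall z \in Y\ Y \in \mathcal{N}(z)\}$ where $\mathcal{N}$ comes from $t$, one checks $\mathcal{T} \subseteq \mathcal{T}'$ because any $Y \in \mathcal{T}$ is itself an open witness making $Y \in \mathcal{N}(z)$ for every $z \in Y$, and $\mathcal{T}' \subseteq \mathcal{T}$ because any $Y \in \mathcal{T}'$ is the union of its pointwise open witnesses and $\mathcal{T}$ is closed under arbitrary unions. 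For $\mathcal{B} = \langle X, \mathcal{N}\rangle \in Nei$, letting $\mathcal{N}'$ be the result of $t \circ s$, the inclusion $\mathcal{N}' \subseteq \mathcal{N}$ uses the supersets-clause of Definition \ref{def:Nei}, and the inclusion $\mathcal{N} \subseteq \mathcal{N}'$ uses the final clause to extract an open kernel inside each neighborhood.

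Both compositions return literally the same pair of sets as started with, not merely isomorphic ones, so definitional equivalence (and not just bi-interpretability) is obtained. The only real point to watch is the second direction of the second equality: this is precisely the place where the existence of an ``open'' neighborhood built into the last clause of $Nei$ is essential, and it is also the place where the analogous argument would fail for a weaker neighborhood axiomatization. Everything else is bookkeeping to translate the informal topology/neighborhood passage into the fixed language $\mathcal{L}_\in(D,d)$ and to observe that the set-theoretic operations involved live inside $V(\mathcal{A})$, which they manifestly do since all the quantifiers in $\tau^t_s$ and $\tau^s_s$ range over subsets of $D$.
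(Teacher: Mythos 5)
Your proposal is correct and follows exactly the route the paper takes: the paper's entire proof is the one-line observation that the sketch from Proposition \ref{prop:TopNei} transfers verbatim into the $\mathcal{V}^{*}$-framework, which is precisely what you do, only with the domain-preservation and round-trip identity checks written out explicitly. The extra detail you supply (the two inclusions in each direction, and the observation that the final clause of Definition \ref{def:Nei} is the load-bearing one) is a faithful expansion of what the paper leaves to the reader.
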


The proof of Proposition \ref{prop:TopNei} suffices here as well:
we can tear proofs out of the book. Next, let us return to $Bool$
and $Stone$, which can also clearly be articulated as theories in
$\mathcal{L}_{\in}(D,d)$. Once again the result is straightforward,
except this time we have bi-interpretability rather than definitional
equivalence. 
\begin{prop}
$Bool$ and $Stone$ are bi-interpretable.\label{prop:BoolStoneBiint}
\end{prop}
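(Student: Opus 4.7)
The plan is to use the same two interpretations that witnessed almost sticks equivalence in Theorem \ref{thm:BoolStone}, lift them into the $\mathcal{V}^{*}$-framework as $T$-interpretations in the sense of Definition \ref{def:T-interpt}, and then exhibit the canonical Stone duality maps as the required uniformly definable isomorphisms $\eta$ and $\nu$. Concretely, $t:Bool\to Stone$ is given by the pair of formulas $\tau_{d}^{t}(X)$ saying ``$X$ is the set of ultrafilters on $d$'' and $\tau_{s}^{t}(\mathcal{T})$ saying ``$\mathcal{T}$ is the topology on $X$ generated by the basic clopens $\{U\in X\mid x\in U\}$ for $x\in D$''; and $s:Stone\to Bool$ is given by formulas saying ``the new domain is the set of clopens of $d$'' and ``the new structure is $\langle\cap,\cup,\cdot^{c},D,\emptyset\rangle$ restricted to this set.'' Since neither output has a domain of atoms in $\mathcal{V}^{*}$, I would invoke the quasi-structured set apparatus to verify that the outputs really do live in the target theories.

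Next I would write down the witnessing bijections. For $\mathbb{B}=\langle B,b\rangle\in Bool$, define $\eta$ by the formula that sends each atom $x\in B$ to the set $\{U\in t_{d}^{V(\mathbb{B})}\mid x\in U\}$, which is precisely the domain of $s\circ t(\mathbb{B})$ (it is the set of basic clopens, which coincides with the clopens of a Stone space). For $\mathcal{S}=\langle X,\mathcal{T}\rangle\in Stone$, define $\nu$ by the formula sending each $x\in X$ to the ultrafilter $\{C\in s_{d}^{V(\mathcal{S})}\mid x\in C\}$ on the algebra of clopens. Both formulas are uniform in the parameters $D,d$, so they determine the required uniformly definable functions $\eta$ and $\nu$ on every model of the source theory.

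I would then verify that $\eta^{V(\mathbb{B})}:B\to s\circ t(\mathbb{B})$ and $\nu^{V(\mathcal{S})}:X\to t\circ s(\mathcal{S})$ are bijections witnessing isomorphism in the sense of Definition \ref{def:auto}. Bijectivity is exactly the content of the classical Stone representation theorem (injectivity uses that there is enough ultrafilters to separate distinct elements of a Boolean algebra; surjectivity uses that every clopen is a finite union of basic clopens, together with compactness) and its dual. Once bijectivity is in hand, the structural condition $\eta^{+}_{\mathbb{B}}(b)=s\circ t(\mathbb{B})_{s}$ reduces to the verification that $\eta$ carries the operations $\wedge,\vee,\neg,\top,\bot$ of $\mathbb{B}$ to $\cap,\cup,\cdot^{c},X,\emptyset$ on clopens, which is the algebraic half of Stone duality; and $\nu^{+}_{\mathcal{S}}(\mathcal{T})$ being the Stone topology on ultrafilters of clopens is the topological half.

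The main obstacle is not the Stone-duality content, which is classical, but the bookkeeping needed to see that the informal textbook maps really are definable by formulas in $\mathcal{L}_{\in}(D,d)$ and really do lift, via Definition \ref{def:lift}, to the structural equalities demanded by bi-interpretability. In particular one has to be careful that the outputs $t(\mathbb{B})$ and $s(\mathcal{S})$ are quasi-structured sets over their respective domains (so that the clauses on transitive closures and paths are satisfied), and that the lifts $\eta^{+}$ and $\nu^{+}$ act on the Boolean operations in the intended pointwise way. Both checks are routine once one unpacks the hierarchies $V(\mathbb{B})$ and $V(\mathcal{S})$, but they are the only places where the $\mathcal{V}^{*}$-framework contributes anything beyond the classical argument.
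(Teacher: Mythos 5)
Your proposal is correct and follows essentially the same route as the paper: reuse the interpretations from Theorem \ref{thm:BoolStone}, and take $\eta$ to send $b\in\mathbb{B}$ to $\{U\in X\mid b\in U\}$ and $\nu$ to send $x\in X$ to the ultrafilter of clopens containing $x$, with the remaining verifications being classical Stone duality. The paper's proof is exactly this, stated more briefly and likewise deferring the quasi-structured-set bookkeeping to the reader.
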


\begin{proof}
The proof of Proposition \ref{thm:BoolStone} can be transferred directly,
however, we have one extra task to complete in order to demonstrate
bi-interpretability: we must define the required isomorphisms. We
shall pick up where the proof of Proposition \ref{thm:BoolStone}
leaves off. First, we want to give a uniform definition of a function
$\eta$ such that for any Boolean algebra $\mathbb{B}$, $\eta^{V(\mathbb{B})}$
is an isomorphism between $\mathbb{B}$ and $s\circ t(\mathbb{B})$.
Recalling that $X$ is the set of ultrafilters on $\mathbb{B}$, we
let $\eta(b)$ for $b\in\mathbb{B}$ be 
\[
\{U\in X\ |\ b\in U\}.
\]
Second, we want to define a uniform definition of a function $\eta$
such that for any stone space $\langle X,\mathcal{T}\rangle$, $\eta^{V(\langle X,\mathcal{T}\rangle)}:\langle X,\mathcal{T}\rangle\cong t\circ s(\langle X,\mathcal{T}\rangle)$.
Let us write $\langle Y,\mathcal{S}\rangle$ to denote $t\circ s(\langle X,\mathcal{T}\rangle)$.
Then recall that $Y$ is the set of ultrapowers on $\mathbb{B}$ where
$\mathbb{B}$ is the Boolean algebra obtained by taking the field
of sets given by the clopen sets in $\mathcal{T}$. We then let $\eta^{V(\langle X,\mathcal{T}\rangle)}:X\to Y$
be such that for all $x\in X$ 
\[
\eta^{V(\langle X,\mathcal{T}\rangle)}(x)=\{b\in\mathbb{B}\subseteq\mathcal{T}\ |\ x\in b\}.
\]
It is easy to see that this is an ultrafilter on $\mathbb{B}$ as
required. We leave the rest of the details to the reader.\footnote{Although as above, we recommend the reader consult \citep{HalvLogPhilSci}
for a comprehensive proof.}
\end{proof}
The proposition above is clearly a version of what is commonly known
as the Stone Duality Theorem. This theorem tells us that natural categories
associated with Boolean algebras and Stone spaces are equivalent to
each other. It will be helpful to explain this in more detail. Let
$Bool^{gpd}$ be the category whose objects are Boolean algebras and
whose arrows are isomorphisms. Let $Stone^{gpd}$ be the category
whose objects are Stone spaces and whose arrows are homeomorphisms.
\footnote{By using the canonical isomorphisms in these categories, we are actually
defining groupoids. Usually the arrows would be continuous maps and
homomorphisms respectively. This way of framing thing hides the ``duality''
aspect of their relationship. But by characterizing things this way,
it makes it easier for us to understand the relationship between our
framework and category-theoretic approaches. We'll discuss this further
soon. } Recall the definition of an equivalence between categories. 
\begin{defn}
Suppose $\mathcal{C}$ and $\mathcal{D}$ are categories. We say that
$\mathcal{C}$ and $\mathcal{D}$ are \emph{equivalent }if there are
functors $F:\mathcal{C}\Rightarrow\mathcal{D}$ and $G:\mathcal{D}\Rightarrow\mathcal{C}$
along with natural transformations $\eta,\nu$ such that:\footnote{See Section 7.8 of \citep{AwodeyCat} for further discussion.}
\begin{itemize}
\item $\eta_{A}:A\cong G\circ F(A)$ for all objects $A$ in $\mathcal{C}$;
and
\item $\nu_{B}:B\cong F\circ G(B)$ for all objects $B$ in $\mathcal{D}$. 
\end{itemize}
\end{defn}

The family resemblance with bi-interpretability should be manifest.
Moreover, it should be clear that the proof of Proposition \ref{prop:BoolStoneBiint}
can also be used to establish that $Bool^{gpd}$ and $Stone^{gpd}$
are equivalent as categories. Moreover, similar examples are easily
plucked from the literatures. For a classic mathematical example we
have:
\begin{thm}
(Gelfand) The theories of commutative $C^{*}$-algebras and compact
Hausdorff spaces are bi-interpretable.\footnote{See Theorem 1.16 in \citep{folland1994course} for more details.}
\end{thm}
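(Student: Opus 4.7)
The plan is to follow the proof strategy of Proposition~\ref{prop:BoolStoneBiint}, substituting the classical Gelfand--Naimark construction for the ultrafilter/clopen-set machinery. First I would articulate both theories in $\mathcal{L}_{\in}(D,d)$. Let $CStar$ say that $d = \langle +,\cdot,\ast,\|\cdot\|,0,1\rangle$ endows $D$ with the structure of a unital commutative $C^*$-algebra over $\mathbb{C}$: the usual complex algebra and involution axioms, the $C^*$-identity $\|a^*a\|=\|a\|^2$, and completeness of $D$ in the metric induced by $\|\cdot\|$. Let $CptHaus$ be $Top$ supplemented with the Hausdorff separation axiom and the statement that every open cover has a finite subcover. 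As elsewhere, $\mathbb{C}$ appears as a term defined from the kernel of $\mathcal{V}^*$.

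Next I would define the two interpretations. For $t:CStar\to CptHaus$, given $\mathcal{A}\in CStar$, let $t_d^{V(\mathcal{A})}$ be the set of characters of $\mathcal{A}$---nonzero $\ast$-homomorphisms $\chi:A\to\mathbb{C}$---and let $t_s^{V(\mathcal{A})}$ be the weak-$\ast$ topology on this set, generated by sets $\{\chi : |\chi(b)-z|<\epsilon\}$ for $b\in A$, $z\in\mathbb{C}$, $\epsilon>0$. For $s:CptHaus\to CStar$, given $\langle X,\mathcal{T}\rangle\in CptHaus$, let $s_d^{V(\langle X,\mathcal{T}\rangle)}$ be the set $C(X,\mathbb{C})$ of continuous complex-valued functions on $X$, with pointwise operations, complex conjugation as involution, and the supremum norm. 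Exactly as in Stone duality, the outputs here are quasi-structured rather than structured sets---characters are themselves functions, and continuous functions live two levels above the atoms of $X$---which is why we aim for bi-interpretability rather than definitional equivalence.

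For the witnessing isomorphisms, I would take $\eta$ to be the Gelfand transform: for $\mathcal{A}\in CStar$ and $a\in A$, set $\eta^{V(\mathcal{A})}(a)=\hat{a}$, where $\hat{a}(\chi)=\chi(a)$. And I would take $\nu$ to be the evaluation map: for $\langle X,\mathcal{T}\rangle\in CptHaus$ and $x\in X$, set $\nu^{V(\langle X,\mathcal{T}\rangle)}(x)=\mathrm{ev}_x$, where $\mathrm{ev}_x(f)=f(x)$.

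The main obstacle is not framework-specific: it is verifying the classical Gelfand--Naimark content. Showing that $\eta^{V(\mathcal{A})}$ is an isometric $\ast$-isomorphism onto $C(\mathrm{spec}(\mathcal{A}))$ requires the continuous functional calculus together with Stone--Weierstrass, while showing that $\nu^{V(\langle X,\mathcal{T}\rangle)}$ is a homeomorphism onto $\mathrm{spec}(C(X))$ requires Urysohn's lemma and the identification of characters of $C(X)$ with maximal ideals, hence with points of $X$. These proofs transfer unchanged, since all the relevant analytic infrastructure---Banach space completeness, complex analysis, and Zorn-style existence arguments---lives in the kernel of $\mathcal{V}^*$ and is freely available inside each $V(\mathcal{A})$ and $V(\langle X,\mathcal{T}\rangle)$. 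The only genuinely framework-specific task is to check that the formulae defining $t_d$, $t_s$, $s_d$, $s_s$, $\eta$, and $\nu$ are uniform in $\mathcal{L}_{\in}(D,d)$, which is routine.
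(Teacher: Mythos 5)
Your proposal is correct and matches the paper's intent exactly: the paper offers no explicit proof, simply importing the classical Gelfand--Naimark duality on the model of Proposition~\ref{prop:BoolStoneBiint}, which is precisely the character-space/$C(X)$ construction with the Gelfand transform and evaluation map as the witnessing definable isomorphisms that you describe.
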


And for an example from physics we have:
\begin{thm}
\citep{RosenstockGRvsEA} The theory of general relativity is bi-interpretable
with the theory of Einstein algebras.
\end{thm}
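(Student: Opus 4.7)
The plan is to mirror the strategy used for Proposition \ref{prop:BoolStoneBiint}, since the Rosenstock et al.\ result is essentially a Gelfand-type duality for smooth Lorentzian manifolds. First, I would axiomatize $GR$ and $EA$ as sentences of $\mathcal{L}_\in(D,d)$. A structured set $\langle D,d\rangle \in GR$ has domain $D$ a set of atoms (the points of the manifold) and structure $d$ coding an atlas of smooth charts on $D$ together with a Lorentzian metric field $g$; standard conditions (Hausdorff, paracompact, smoothness of transition maps, non-degeneracy and signature of $g$) are all expressible as $\mathcal{L}_\in(D,d)$-formulae relativized to $V(\mathcal{A})$, using the kernel's copy of $\mathbb{R}$. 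A structured set in $EA$ codes a commutative $\mathbb{R}$-algebra $A$, its module of derivations, and a non-degenerate symmetric $A$-bilinear form of Lorentzian signature on that module.

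Next, I would define the interpretations $t : GR \to EA$ and $s : EA \to GR$. For a spacetime $\langle M, g\rangle$, let $t_d^{V(\langle M,g\rangle)}$ be the set $C^\infty(M)$ of smooth real-valued functions on $M$, and let $t_s$ encode its algebra structure together with the bilinear form on derivations induced from $g$. In the other direction, for $\mathbb{A} \in EA$, let $s_d^{V(\mathbb{A})}$ be the set $\mathrm{Spec}(A)$ of unital $\mathbb{R}$-algebra homomorphisms $A \to \mathbb{R}$, with the smooth structure and metric reconstructed in the familiar way (maximal ideals carry cotangent data and derivations act as vector fields). Both constructions yield \emph{quasi}-structured sets — not structured sets — since their domains are built from elements of the kernel rather than from fresh atoms, exactly as in the $Bool$--$Stone$ case.

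For bi-interpretability I would then define the uniformly definable isomorphisms $\eta$ and $\nu$. For $\langle M, g\rangle \in GR$, let $\eta^{V(\langle M,g\rangle)} : M \to \mathrm{Spec}(C^\infty(M))$ be the evaluation map $p \mapsto (f \mapsto f(p))$. For $\mathbb{A} \in EA$, let $\nu^{V(\mathbb{A})}: A \to C^\infty(\mathrm{Spec}(A))$ be the Gelfand-style map $a \mapsto \hat a$ with $\hat a(\chi) = \chi(a)$. That these are bijections between the respective domains follows from the classical fact that for a paracompact Hausdorff smooth manifold every unital $\mathbb{R}$-algebra character of $C^\infty(M)$ is evaluation at a unique point; that they lift via Definition \ref{def:lift} to isomorphisms of quasi-structured sets in the sense of Definition \ref{def:auto} then follows from the fact that the smooth atlas and the metric are algebraically determined.

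The hard part will be verifying that the metric — not merely the smooth manifold structure — is faithfully transported back and forth, and this is precisely where the Rosenstock et al.\ categorical analysis does the heavy lifting and can be imported almost verbatim. It is also worth noting why this is only bi-interpretability and not definitional equivalence: the points of $s \circ t(\langle M,g\rangle)$ are characters on $C^\infty(M)$, not the original atoms of $M$, so the round-trip produces a genuinely different quasi-structured set and the nontrivial evaluation map $\eta$ is unavoidable — precisely as with $Bool$ and $Stone$.
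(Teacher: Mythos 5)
Your proposal takes essentially the same route as the paper: the paper offers no proof beyond the citation, remarking only that the Rosenstock--Barrett--Weatherall duality argument is "directly imported" into the $\mathcal{V}^{*}$-framework in exact parallel with Proposition \ref{prop:BoolStoneBiint}, which is precisely the strategy you execute. One small wrinkle you should patch: $C^{\infty}(M)$ is not literally a quasi-domain, since its elements are functions into the kernel's $\mathbb{R}$, so $\emptyset\in trcl(C^{\infty}(M))$ and clause 1(a) of the quasi-structured set definition fails; you need the same tagging/conversion trick the paper uses elsewhere (e.g.\ in the $Nat$--$Aleph1$ and atomization arguments) to push the new domain clear of the kernel before the interpretation is well-formed.
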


In both of these cases, we are taking duality theorems from the literature
and just directly importing the proofs into our framework. This is
something we should expect to be able to keep on doing in our proposed
system and this speaks in favor of my claim that we can take proofs
straight out of the book. However, there is a sense in which I'm just
stealing these proofs from what is arguably a competitor framework:
category theory. Moreover, there is also a sense in the results, as
stated above are weakenings of their standard statements. This warrants
some discussion that will help us better understand that the current
framework offers that category theory does not. 

The usual way to set up the Stone duality is to start with categories
$Bool^{cat}$ and $Stone^{cat}$. These categories have the same objects
as $Bool^{gpd}$ and $Stone^{gpd}$ but they use weaker arrows. In
particular, $Bool^{cat}$ uses homomorphisms while $Stone^{cat}$
uses continuous maps. In this setting, we do not get equivalence,
but rather, duality. Recall that the opposite category of some category
$\mathcal{C}$ is obtained by simply reversing the direction of all
arrows in that category. It can then be seen, using essentially the
same argument sketched above, that these categories are dual to each
other in the sense that $Bool^{cat}$ is equivalent to the opposite
category of $Stone^{cat}$. This extra information about reversing
arrows is not part of the information detected by our notion of bi-interpretability.
As such, we might worry that the equivalence relations defined above
are unable to recapture important information that mathematicians
care about. This is a natural worry, but one that is easily addressed.

The first thing to note is that although definitional equivalence
and bi-interpretability do not capture the content of duality, there
is nothing stopping us from working in the $\mathcal{V}^{*}$ framework
to define equivalence relations that do. Indeed, I think it is likely
that investigation into this problem would yield a better understanding
of the relationship between set theoretic and category theoretic approaches
and attitudes toward equivalence in mathematics. However, if we think
back to our initial motivations for developing this framework, we
will recall that we had some concerns about categorical equivalence
and its philosophical implications for theory comparison. In particular,
we we worried that novel conceptual elements of arrows and functors
placed significant hurdles in front of any story about equivalence
and interdefinability. As such, I would like to suggest that while
the introduction of arrows between structures reveals interesting
mathematical information, it has no obvious place in a story about
how the language of one theory user may be translated into the language
of another. By contrast, definitional equivalence and bi-interpretability,
as defined above, make no essential use of arrows between structured
sets. Moreover, theories and the translations between them are treated
as linguistic objects that fit very neatly into a story about translation.
It seems very natural to say that $Top$ and $Nei$ are interdefinable
variants of each other; and the fact that they are definitionally
equivalent provides some explanation of this intuition. So they key
point here is that, yes, duality becomes invisible, but this appears
to be the cost of obtaining a framework just focused on translation. 

The second thing to note is that in the $\mathcal{V}^{*}$ framework
functors do play a silent role. So while the equivalence relations
described above are distinct from their category theoretic cousins,
they are still very closely related. To illustrate this, suppose we
have theories $T$ and $S$. Then we may easily obtain theory groupoids
$T^{gpd}$ and $S^{gpd}$ from $T$ and $S$ by letting the objects
be the structured sets satisfying the theory and letting the arrows
between them be the isomorphisms between them. Now if the theories
$T$ and $S$ were bi-interpretable as witnessed by interpretations
$t:T\leftrightarrow S:s$, then it is not difficult to see that $t$
and $s$ can be used to derive functors $F_{t}:T^{gpd}\Rightarrow S^{gpd}$
and $G_{s}:S^{gpd}\Rightarrow T^{gpd}$ that witness the equivalence
of those categories.\footnote{In particular, for objects $\mathcal{A}$ from $T^{gpd}$, we let
$F_{t}(\mathcal{A})=t(\mathcal{A})$. And for isomorphisms $h:\mathcal{A}\cong\mathcal{B}$
from $T^{gpd}$, we let $F_{t}(h):t(\mathcal{A})\cong t(\mathcal{B})$
be the bijection $h^{*}:t_{d}(A)\to t_{d}(B)$ such that $h^{*}=h_{\mathcal{A}}^{+}\restriction t_{d}(A)$.
$G_{s}$ is defined similarly. Moreover, the required natural isomorphisms
are then easily defined form the definable isomorphisms witnessing
the bi-interpretation.\label{fn:IntToFunct}} Thus, we see that for theories, as defined above and then represented
as groupoids, bi-intepretability implies definitional equivalence.
And more, the converse is also ``practically'' true. By this we
mean that, in practice, the arguments used to establish equivalence
between categories of these kinds are quite constructive: they prove
that the functor exists by defining the functor. I'm not aware of
any proof of such an equivalence that begins by supposing toward a
contradiction that there was no such functor. As such, these arguments
also tend to slide immediately into the framework we've described
above.

So far, so good. We have a framework that gives us easy equivalence
proofs that can be taken from textbooks with little or no change.
The interdefinability definitions are closely related to category
theoretic relations, but they remain compatible with a simple story
about translation and equivalence. But we always knew this would be
the easy part. The rubber will only hit the road when we discuss inequivalence.

\subsection{Inequivalence\label{subsec:Inequivalence}}

We continue our test drive by returning to the example that stumped
sticks equivalence: $Set1$ and $Set2$. We'll do better now, but
let's take a moment to set things up. Let us now construe $Set1$
as the the theory in $\mathcal{L}_{\in}(D,d)$ of singleton sets of
atoms with no structure. Thus, structured sets satisfying $Set1$
can be regarded as being of the form $\mathcal{A}=\langle\{e\}\rangle$.\footnote{Strictly, we should include a structure to go with the domain. We'll
omit it here and below, by $\emptyset$ would suffice for this purpose.} Similarly, $Set2$ will be the theory of pairs of atoms and so, structured
sets satisfying $Set2$ will be regarded as being of the form $\langle\{b,c\}\rangle$.
It is not difficult to see that $Set1$ and $Set2$ are not definitionally
equivalent. We include a proof since it seems like a good idea to
see a simple case written out in some detail. It also provides an
opportunity to illustrate that, despite some of the technical fussiness
of Section \ref{sec:V*}, the proofs tend to flow in quite a natural
fashion. 
\begin{prop}
($\mathcal{V}^{*}$) $Set1$ and $Set2$ are not definitionally equivalent. 
\end{prop}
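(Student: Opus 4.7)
My plan is a proof by contradiction driven by the automorphism machinery developed in Section \ref{sec:V*}. Assume $t:Set1\leftrightarrow Set2:s$ witness definitional equivalence, and fix any $\mathcal{B}=\langle\{b,c\}\rangle\in Set2$ with $b\neq c$. Since $s$ is an interpretation with $s(\mathcal{B})\in Set1$, we may write $s(\mathcal{B})=\langle\{e\}\rangle$ for some atom $e$.

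The first key observation is that $e$ must lie in $\{b,c\}$. Indeed, $e$ is the unique object in $V(\mathcal{B})$ satisfying the domain-defining formula $s_d$ from the interpretation $s=\langle s_d,s_s\rangle$, and, being the domain of a structured set internal to $V(\mathcal{B})$, it must be an atom of $V(\mathcal{B})$. But the atoms of $V(\mathcal{B})=V(\{b,c\})$ are by construction exactly $b$ and $c$, so $e\in\{b,c\}$.

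Now I would invoke automorphisms. Consider the transposition $\sigma:\{b,c\}\to\{b,c\}$ swapping $b$ and $c$. Because $\mathcal{B}$ carries no structure beyond its domain, $\sigma:\mathcal{B}\cong\mathcal{B}$ is an automorphism in the sense of Definition \ref{def:auto}. Lift $\sigma$ to $\sigma^{*}:V(\mathcal{B})\to V(\mathcal{B})$; by the preceding symmetry lemma, truth in $V(\mathcal{B})$ of any $\mathcal{L}_{\in}(D,d)$-formula is preserved under $\sigma^{*}$. Since $e$ is uniquely characterized by $s_d$, this forces $\sigma^{*}(e)=e$. But $\sigma^{*}$ agrees with $\sigma$ on the atoms of $V(\mathcal{B})$, and $\sigma$ visibly moves both $b$ and $c$, so $\sigma^{*}(e)\neq e$---contradiction.

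The main obstacle is, honestly, just bookkeeping: verifying that the ``atoms of $V(\mathcal{B})$'' really are exactly the elements of $\{b,c\}$, and that the symmetry lemma applies verbatim when the structure is vacuous. Both are routine consequences of the constructions in Section \ref{sec:V*}. I would also note in passing that one can derive the same conclusion in a single line from the domain-preservation proposition---for any $\mathcal{A}=\langle\{e\}\rangle\in Set1$, the image $t(\mathcal{A})\in Set2$ would be forced to have domain $\{e\}$, which is a singleton rather than a pair---but the automorphism proof above is the one worth writing out carefully, since it is the template that will generalize to the richer inequivalence results later in the paper.
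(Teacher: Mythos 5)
Your central step fails, and it fails at exactly the point the $\mathcal{V}^{*}$-framework was engineered to be permissive. You claim that because $\{e\}=s_{d}^{V(\mathcal{B})}$ is the domain of a structured set defined inside $V(\mathcal{B})$, the element $e$ must be an atom of $V(\mathcal{B})$ and hence lie in $\{b,c\}$. But interpretations (Definition \ref{def:T-interpt}) are only required to output \emph{quasi-structured} sets, whose domains need not consist of atoms of $\mathcal{V}^{*}$; they need only behave as atoms relative to the universe built over them. The paper's own remark immediately after this proposition supplies a direct counterexample to your claim: $Set2$ interprets $Set1$ by sending $\langle\{b,c\}\rangle$ to $\langle\{\{b,c\}\}\rangle$, so that $e=\{b,c\}\notin\{b,c\}$. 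Your automorphism step is then powerless against such an $e$: the transposition $\sigma$ swapping $b$ and $c$ lifts to $\sigma^{*}$ with $\sigma^{*}(\{b,c\})=\{b,c\}$, so the required fixed-point condition $\sigma^{*}(e)=e$ is satisfied and no contradiction arises. What the automorphism argument genuinely establishes is only that $e$ cannot be $b$ or $c$ themselves (equivalently, that no nonempty proper subset of the domain is definable over $V(\mathcal{B})$) --- which is precisely what the paper offers as a supplementary remark after its proof, not as the proof itself.

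The paper's actual argument is a containment count that covers every possible $e$: whatever $e$ is, the universe $V(\{e\})$ built over the singleton domain contains at most one of the two atoms $b$ and $c$, so the domain $\{i,j\}$ of $t\circ s(\mathcal{B})$, whose elements must come from $V(\{e\})$, cannot equal $\{b,c\}$, contradicting $t\circ s(\mathcal{B})=\mathcal{B}$. To repair your write-up you would need to add exactly this analysis for the case $e\notin\{b,c\}$. Ironically, the argument you relegate to a parenthetical --- that definitional equivalence forces domain preservation, so $t$ would have to send a singleton domain to itself while landing in $Set2$ --- is correct as stated and is essentially a compressed form of the paper's reasoning; that, rather than the automorphism argument, is the one you should have written out in full.
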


\begin{proof}
Suppose toward a contradiction that $t:Set1\leftrightarrow Set2:s$
are interpretations witnessing that $Set1$ and $Set2$ are definitionally
equivalent. Let $\mathcal{A}=\langle\{b,c\}\rangle\in Set2$. Then
$s(\mathcal{A})=\langle\{e\}\rangle$ for some $e\in V(\mathcal{A})$
where $\{e\}=t_{d}^{V(\mathcal{A})}$. Note that at most one of $b$
and $c$ can be an element of $V(s(\mathcal{A}))=V(\langle\{e\}\rangle)$.
Then we see that $t\circ s(\mathcal{A})=\langle\{i,j\}\rangle$ for
$i\neq j\in V(s(\mathcal{A}))$ where $\{i,j\}=s_{d}^{V(s(\mathcal{A}))}$.
Since we've assumed that $\mathcal{A}=t\circ s(\mathcal{A})$ we must
have: $b=i$ and $b=j$; or $b=j$ and $c=i$. But this is clearly
impossible since either $b\notin V(s(\mathcal{A}))$ or $c\notin V(s(\mathcal{A}))$. 
\end{proof}
In fact, using an argument similar to the proof of Proposition \ref{prop:protoAuto},
we can show that $e$ in the proof above cannot be either $b$ or
$c$; i.e., we use an automorphism argument to show that no proper
subset of $\{b,c\}$ other than $\emptyset$ can be defined in $V(\mathcal{A}$).
Nonetheless, in contrast to our discussion of second order logic,
$Set2$ can interpret $Set1$. In particular, given $\mathcal{A}=\langle\{b,c\}\rangle\in Set2$,
we may let $s(\mathcal{A})=\langle\{\{b,c]\}\rangle\in Set1$ which
is clearly definable in $V(\mathcal{A})$. Thus, we have some parallels
with the case of second order logic, but our notion of interpretation
is notably stronger in the $\mathcal{V}^{*}$ framework. This extra
power gives rise to a natural question: are $Set1$ and $Set2$ perhaps
bi-interpretable? We shall see that they are not. This can be proved
in a direct manner, but it will be helpful to use this simple example
to unpack a proof template that can be used very generally. The following
lemma is the key.
\begin{lem}
($\mathcal{V}^{*}$) Suppose $T$ and $S$ are bi-interpretable as
witnessed by interpretations $t:T\leftrightarrow S:s$. Then for all
$\mathcal{A}\in T$\label{lem:AutoTemplate}
\[
Aut(\mathcal{A})\cong Aut(t(\mathcal{A}))
\]
where $Aut(\mathcal{A})$ is the automorphism group of the structured
set $\mathcal{A}$.
\end{lem}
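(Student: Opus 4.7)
The plan is to send an automorphism $f:\mathcal{A}\cong\mathcal{A}$ to the restriction of its lift $f^{*}:V(\mathcal{A})\to V(\mathcal{A})$ to the domain of $t(\mathcal{A})$, and then to show that the analogous construction for $s$ provides an inverse up to conjugation by the definable isomorphism $\eta^{V(\mathcal{A})}$. Throughout, the essential tool will be the preservation lemma just before Section \ref{subsec:Definitional-equivalence-and}: because $f^{*}(A)=A$ and $f^{*}(a)=a$, any object definable over $V(\mathcal{A})$ from $D$ and $d$ is fixed setwise by $f^{*}$.

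More concretely, I would define $\Phi_{t}:Aut(\mathcal{A})\to Aut(t(\mathcal{A}))$ by $\Phi_{t}(f)=f^{*}\restriction t_{d}^{V(\mathcal{A})}$. Since $t_{d}^{V(\mathcal{A})}$ and $t_{s}^{V(\mathcal{A})}$ are each definable without parameters outside $\{D,d\}$, they are fixed setwise by $f^{*}$, so the restriction is a bijection of the domain of $t(\mathcal{A})$. A short induction on the $\in$-recursion defining the lift then shows that the lift of this restriction computed inside $V(t(\mathcal{A}))$, with elements of $t_{d}^{V(\mathcal{A})}$ treated as atoms, agrees with $f^{*}\restriction field(t(\mathcal{A}))$; applied to $t_{s}^{V(\mathcal{A})}$ this identifies $\Phi_{t}(f)$ as an automorphism of $t(\mathcal{A})$, and functoriality of the lift makes $\Phi_{t}$ a group homomorphism. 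I would define $\Phi_{s}:Aut(t(\mathcal{A}))\to Aut(s\circ t(\mathcal{A}))$ symmetrically.

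Finally, I would verify that $\Phi_{s}\circ\Phi_{t}$ equals conjugation by $\eta^{V(\mathcal{A})}$ and $\Phi_{t}\circ\Phi_{s}$ equals conjugation by $\nu^{V(t(\mathcal{A}))}$; both are group isomorphisms because $\eta^{V(\mathcal{A})}$ and $\nu^{V(t(\mathcal{A}))}$ are isomorphisms of structured sets. The key step is that because $\eta$ is uniformly definable over $T$, the preservation lemma gives $f^{*}(\eta^{V(\mathcal{A})})=\eta^{V(\mathcal{A})}$ as sets of ordered pairs, which unwinds to $f^{*}\circ\eta^{V(\mathcal{A})}=\eta^{V(\mathcal{A})}\circ f$ on $A$; combined with the identification of $(\Phi_{s}\circ\Phi_{t})(f)$ with $f^{*}\restriction s_{d}^{V(t(\mathcal{A}))}$, this yields the conjugation identity. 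Once both compositions are bijections, $\Phi_{t}$ is both injective and surjective, hence an isomorphism of groups. I expect the fussiest part to be the induction on lifts across the nested universes $V(\mathcal{A})\supseteq V(t(\mathcal{A}))\supseteq V(s\circ t(\mathcal{A}))$: the claim is conceptually transparent but requires patient bookkeeping about which sets are pure, which count as atoms in which universe, and how the recursive clause of Definition \ref{def:lift} unfolds layer by layer.
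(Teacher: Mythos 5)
Your proposal is correct, but it is not the route the paper takes: the paper explicitly declines to give a direct proof and instead passes to the theory groupoids $T^{cat}$ and $S^{cat}$, derives functors $F_{t}$ and $G_{s}$ from the interpretations (as sketched in footnote \ref{fn:IntToFunct}), and then invokes the standard category-theoretic Fact \ref{fact:CatFacts} that an equivalence of categories is full and faithful and hence restricts to an isomorphism of automorphism groups. Your argument is the direct, set-theoretic unpacking of exactly that machinery: your $\Phi_{t}(f)=f^{*}$ restricted to $t_{d}^{V(\mathcal{A})}$ is the action of $F_{t}$ on arrows, your observation that the preservation lemma forces $f^{*}$ to fix $t_{d}^{V(\mathcal{A})}$ and $t_{s}^{V(\mathcal{A})}$ setwise is what makes $F_{t}$ a functor, and your conjugation identity $f^{*}\circ\eta^{V(\mathcal{A})}=\eta^{V(\mathcal{A})}\circ f$ is precisely the naturality square for the natural isomorphism $\eta$. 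The trade-off is the expected one: the paper's route is shorter and reuses a standard fact, at the cost of hiding the bookkeeping in a footnote; your route is self-contained and makes visible where the quasi-structured-set conditions (in particular clause 2(b), which guarantees that the lift of the restricted map computed inside $V(t(\mathcal{A}))$ agrees with $f^{*}$ on $field(t(\mathcal{A}))$) are actually doing work. Two small points to watch if you write it out in full: the preservation lemma is stated for structured sets but you need it for the quasi-structured set $t(\mathcal{A})$ as well when defining $\Phi_{s}$; and the final step "both compositions are bijections, hence $\Phi_{t}$ is an isomorphism" should be spelled out as the usual two-step argument ($\Phi_{s}\circ\Phi_{t}$ bijective gives $\Phi_{t}$ injective and $\Phi_{s}$ surjective; $\Phi_{t}\circ\Phi_{s}$ bijective at $t(\mathcal{A})$ gives $\Phi_{s}$ injective, hence bijective, hence $\Phi_{t}=\Phi_{s}^{-1}\circ(\Phi_{s}\circ\Phi_{t})$ is bijective).
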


While it is quite possible to provide a direct proof of this, it will
be more compact to borrow a couple of standard facts from category
theory. 
\begin{fact}
Let $\mathcal{C}$ and $\mathcal{D}$ be locally small categories.
Suppose $\mathcal{C}$ and $\mathcal{D}$ are equivalent as witnessed
by functors $F:\mathcal{C}\Leftrightarrow\mathcal{D}:G$. Then\label{fact:CatFacts}
\begin{enumerate}
\item $F$ is full, faithful and essentially surjective; and
\item For all objects $A$ from $\mathcal{C}$, $Aut(A)\cong Aut(F(A))$. 
\end{enumerate}
\end{fact}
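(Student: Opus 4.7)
The plan is to derive both clauses directly from the definition of an equivalence, using the two natural isomorphisms $\eta:1_{\mathcal{C}}\cong G\circ F$ and $\nu:1_{\mathcal{D}}\cong F\circ G$ that come packaged with the functors $F$ and $G$. The components of $\eta$ and $\nu$, together with the basic fact that any functor preserves identities, composition, and isomorphisms, do all the work; nothing about local smallness matters for (1) and matters for (2) only in that the hom-classes are sets so that $Aut(A)$ is a group.

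For clause (1), essential surjectivity is read straight off $\nu$: for any object $B$ of $\mathcal{D}$, the component $\nu_{B}:B\cong F(G(B))$ exhibits $B$ as isomorphic to an object in the image of $F$. For faithfulness, suppose $f,g:A\to A'$ in $\mathcal{C}$ satisfy $F(f)=F(g)$; applying $G$ and then naturality of $\eta$ at $f$ and at $g$ gives $\eta_{A'}\circ f=G(F(f))\circ\eta_{A}=G(F(g))\circ\eta_{A}=\eta_{A'}\circ g$, and since $\eta_{A'}$ is invertible it is left cancellable, so $f=g$. For fullness, given $h:F(A)\to F(A')$, I would put $f=\eta_{A'}^{-1}\circ G(h)\circ \eta_{A}$; a short naturality chase using $\nu$ at $F(A)$ and $F(A')$ (or, equivalently, first replacing the equivalence by an adjoint equivalence so the triangle identities are available) shows $F(f)=h$.

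For clause (2), we leverage (1) on the restricted hom-sets. Since $F$ preserves identities and composition, and sends isomorphisms to isomorphisms, it restricts to a group homomorphism $F:Aut(A)\to Aut(F(A))$. Injectivity of this homomorphism is immediate from the faithfulness established in (1). For surjectivity, take $h\in Aut(F(A))$; by the fullness established in (1) there is some $f:A\to A$ with $F(f)=h$. To see $f$ itself is an automorphism, use that equivalences reflect isomorphisms: $G(h)$ is an isomorphism because $G$ preserves isomorphisms, and the naturality square for $\eta$ gives $\eta_{A}\circ f=G(F(f))\circ\eta_{A}=G(h)\circ\eta_{A}$, so $f=\eta_{A}^{-1}\circ G(h)\circ\eta_{A}$ is a composite of isomorphisms. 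Hence the group homomorphism $F\restriction Aut(A)$ is bijective, giving the desired isomorphism of groups.

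The main obstacle is the fullness step in clause (1): the construction of a preimage for $h$ relies on the compatibility between $\eta$ and $\nu$ expressed by the triangle identities, which hold automatically for an adjoint equivalence but only after adjustment for a bare equivalence. This is the one point where I would either invoke the standard theorem that any equivalence refines to an adjoint equivalence or verify the required compatibility by a small diagram chase; the remainder of the proof is a mechanical unpacking of the defining data of an equivalence.
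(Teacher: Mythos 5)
Your proof is correct and takes essentially the same route as the paper: clause (2) is obtained by restricting $F$ to $Aut(A)$ and reading injectivity off faithfulness and surjectivity off fullness, exactly as in the paper's argument (which simply cites Awodey, Proposition 7.28, for clause (1) rather than proving it). You additionally verify the one step the paper leaves implicit --- that a fullness-preimage of an automorphism of $F(A)$ is itself an automorphism of $A$, via the naturality square for $\eta$ --- which is a worthwhile bit of care rather than a divergence in method.
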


\begin{proof}
For (1) see Proposition 7.28 of \citep{AwodeyCat}.\footnote{Note that in the context we are working, we cannot prove the converse
unless we have some form of global choice. For example, assuming $V=HOD$
would suffice. Given a functor $F$ as described in (1), we use the
choice principle to pick one of what could be many objects $A$ from
$\mathcal{C}$ such that $F(A)=B$ for some object $B$ in $\mathcal{D}$. } For (2), we see from (1) that $F$ is full, faithful and essentially
surjective. Now let $A$ be an object from $\mathcal{C}$. We let
$F^{*}:Aut(A)\to Aut(F(A))$ be defined such that for all $g:A\cong A$
in $\mathcal{C}$, $F^{*}(g)=F(g)$; i.e., $F^{*}$ is the restriction
of $F$ to automorphisms. We claim that $F^{*}$ is itself an automorphism.
Injectiveness follows from the faithfulness of $F$; and surjectiveness
follows from the fullness of $F$. Preservation of identity and composition
follow by functoriality as does the preservation of inverses.
\end{proof}
The proof of our lemma then follows easily.
\begin{proof}
(of Lemma \ref{lem:AutoTemplate}) Let $T^{cat}$ and $S^{cat}$ be
the categories obtained from $T$ and $S$ by using: the structured
sets that satisfy $T$ and $S$ as objects; and isomorphisms between
them as arrows. One can then obtain functors $F_{t}:T^{cat}\Rightarrow S^{cat}$
and $G_{s}:S^{cat}\Rightarrow T^{cat}$ from the interpretations $t$
and $s$ that witness that $T^{cat}$ and $S^{cat}$ are equivalent
categories.\footnote{See footnote \ref{fn:IntToFunct} for a sketch of how such a functor
may be obtained.} The result then follows from Fact \ref{fact:CatFacts}(2).
\end{proof}
This gives us a very simple, and very common strategy, for showing
a failure of bi-interpretability. It suffices, for example, to just
find a structured set $\mathcal{B}\in S$ whose automorphism group
is not isomorphic to any structured set $\mathcal{A}\in T$. Let's
put this strategy to work. 
\begin{prop}
($\mathcal{V}^{*}$) $Set1$ and $Set2$ are not bi-interpretable. 
\end{prop}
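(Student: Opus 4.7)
The plan is to apply Lemma \ref{lem:AutoTemplate} directly, so the work reduces to computing the automorphism groups of the structured sets in $Set1$ and $Set2$ and observing that they cannot match up under any interpretation.

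First I would fix the representation. A structured set $\mathcal{A} \in Set1$ has the form $\langle \{e\}, \emptyset \rangle$, and a structured set $\mathcal{B} \in Set2$ has the form $\langle \{b,c\}, \emptyset \rangle$ (using the convention that we fill in the missing structure slot with $\emptyset$). Since $\emptyset$ lies in the kernel, the lift $f^{+}_{\mathcal{A}}$ of any bijection on the domain fixes the structure component, so every bijection of the domain is automatically an isomorphism in the sense of Definition~\ref{def:auto}.

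Next I would compute the two automorphism groups. For $\mathcal{A} = \langle \{e\}, \emptyset \rangle \in Set1$, the only bijection $\{e\} \to \{e\}$ is the identity, so $Aut(\mathcal{A})$ is the trivial group. For $\mathcal{B} = \langle \{b,c\}, \emptyset \rangle \in Set2$, there are exactly two bijections $\{b,c\} \to \{b,c\}$, namely the identity and the swap, so $Aut(\mathcal{B}) \cong \mathbb{Z}/2\mathbb{Z}$.

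To finish, suppose toward a contradiction that $Set1$ and $Set2$ were bi-interpretable, witnessed by interpretations $t : Set1 \leftrightarrow Set2 : s$. Pick any $\mathcal{A} \in Set1$. Then $t(\mathcal{A}) \in Set2$, so by Lemma~\ref{lem:AutoTemplate} we would have $Aut(\mathcal{A}) \cong Aut(t(\mathcal{A}))$, i.e.\ the trivial group would be isomorphic to $\mathbb{Z}/2\mathbb{Z}$, which is absurd. I do not anticipate any real obstacle: the entire argument is a one-line application of the template lemma once the two automorphism groups have been identified, and the only point that needs a moment of care is verifying that the trivial structure $\emptyset$ really does leave every domain bijection as an isomorphism, which is immediate from Definition~\ref{def:lift}.
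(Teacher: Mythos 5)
Your proposal is correct and follows essentially the same route as the paper: both apply Lemma \ref{lem:AutoTemplate} after computing that structures in $Set1$ have trivial automorphism group while those in $Set2$ have a two-element automorphism group. Your extra remark that the empty structure is fixed by the lift (so every domain bijection is an isomorphism) is a small verification the paper leaves implicit, but the argument is the same.
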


\begin{proof}
Note that both of these theories are categorical in the sense that
they both contain just one structure up to isomorphism. For $\mathcal{A}$
in $Set1$, we obviously have $|Aut(\mathcal{A})|=1$ since the only
isomorphism of a singleton is the identity. Similarly, for $\mathcal{B}$
in $Set2$, we have $|Aut(\mathcal{B})|=2$ since the only isomorphism
of a pair other than the identity is the switching permutation. Clearly,
$Aut(\mathcal{A})$ and $Aut(\mathcal{B})$ are not isomorphic and
so, Lemma \ref{lem:AutoTemplate} tells us that $Set1$ and $Set2$
are not bi-interpretable. 
\end{proof}
This is the main device behind most of our proofs that bi-interpretability
fails. Once again, the proof relies on familiar techniques from category
theory that have been re-situated in our $\mathcal{V}^{*}$ framework.
However, the example above is so simple that is unlikely to help us
develop better intuitions about when theories are equivalent and inequivalent
in our framework. With that in mind, let us explore something a little
more concrete. 

Let $Reals_{Top}$ be the $\mathcal{L}_{\in}(D,d)$-theory of the
standard topology on the reals. More formally, $Reals_{Top}$ will
say that a structured sets $\langle X,\mathcal{T}\rangle$ satisfies
$Top$ and that $\langle X,\mathcal{T}\rangle$ is homeomorphic to
the to the standard interval topology on $\mathbb{R}$, where $\mathbb{R}$
denotes the reals in the kernel as they are usually defined in $ZFC$.
Note that we cannot let $X$ simply be $\mathbb{R}$ since then $\langle X,\mathcal{T}\rangle$
would not be a structured set as its domain would not consist of atoms.
We shall see that this move plays a crucial role in upholding our
motivating intuitions about generalizing second order logic. In particular,
automorphisms of $V(\langle X,\mathcal{T}\rangle)$ can often move
elements of $X$, but elements of the kernel like elements of $\mathbb{R}$
are fixed. Next, let $Reals_{Met}$ be the $\mathcal{L}_{\in}(D,d)$-theory
of the Euclidean Metric on the reals. More formally, $Reals_{Met}$
say that a structured set $\langle X,d\rangle$ is isomorphic to $\mathbb{R}$
(from the kernel) equipped with the Euclidean metric. Intuitively
speaking, it seems obvious that $Reals_{Top}$and $Reals_{Met}$ can't
be interdefinable. Let's show that the $\mathcal{V}^{*}$ framework
upholds that intuition. 
\begin{prop}
($\mathcal{V}^{*}$) $Reals_{Top}$ and $Reals_{Met}$ are not definitionally
equivalent.\label{prop:RealTopNMet=0000ACdefEq}
\end{prop}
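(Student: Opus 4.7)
My plan is to combine preservation of domains with an automorphism argument on the topology. Suppose, toward contradiction, that $t : Reals_{Top} \leftrightarrow Reals_{Met} : s$ are interpretations witnessing definitional equivalence. By the preceding proposition, $t$ preserves domains, so for any $\mathcal{A} = \langle X, \mathcal{T}\rangle \in Reals_{Top}$ we have $t(\mathcal{A}) = \langle X, d\rangle \in Reals_{Met}$, where $d = t_s^{V(\mathcal{A})}$ is the denotation in $V(\mathcal{A})$ of a parameter-free term $t_s$.

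Next I would produce an automorphism of $\mathcal{A}$ that is forced to act as an isometry of $d$. Fix a homeomorphism $\phi : X \to \mathbb{R}$ (to the standard topology) and set $f := \phi^{-1} \circ g \circ \phi$ with $g(r) = 2r$; then $f$ is a homeomorphism of $\mathcal{T}$ and hence an automorphism of the structured set $\mathcal{A}$. The crucial step is the transfer: the lift $f^* : V(\mathcal{A}) \to V(\mathcal{A})$ fixes every element of the kernel and also fixes the denotations of the constants $D$ and $d$ of $\mathcal{L}_\in(D,d)$ (namely $X$ and $\mathcal{T}$), so by the preceding lemma on preservation of formula satisfaction under lifts, applied to the defining formula of the term $t_s$, we get $f^*(d) = d$. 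Since $d : X \times X \to \mathbb{R}$ takes values in the kernel, this equality unfolds to $d(f(x), f(y)) = d(x, y)$ for all $x, y \in X$; that is, $f$ is an isometry of $\langle X, d\rangle$.

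Finally I would derive the metric contradiction. Fix an isometry $\psi : \langle X, d\rangle \to \langle \mathbb{R}, d_{\mathrm{Euc}}\rangle$, which exists because $\langle X, d\rangle \in Reals_{Met}$. Then $F := \psi \circ f \circ \psi^{-1}$ is simultaneously an isometry of Euclidean $\mathbb{R}$ and, via the bijection $\psi \circ \phi^{-1}$, topologically conjugate to $g(r) = 2r$. The map $g$ has a unique fixed point and is of infinite order, both of which are invariants of topological conjugacy, so $F$ has these properties too. But the isometries of Euclidean $\mathbb{R}$ are the identity (infinitely many fixed points), the nontrivial translations (no fixed points), and the reflections (order two); none has both a unique fixed point and infinite order. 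Contradiction. The main obstacle is the transfer step in the second paragraph, which leans on the lift-invariance lemma together with the fact that $\mathbb{R}$ lies in the kernel and is therefore pointwise fixed by every lift; once $f$ is coerced into being an isometry, the final dynamical obstruction is immediate.
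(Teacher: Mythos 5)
Your proposal is correct and follows essentially the same route as the paper: domain preservation forces $t(\mathcal{A})=\langle X,d\rangle$, and a scaling homeomorphism, being an automorphism of the structured set, must fix the definable term $d$ (by the lift-invariance lemma) and hence act as an isometry of $d$, which is impossible for anything isometric to the Euclidean line. The only difference is in the finishing move: the paper runs the whole family of scalings $x\mapsto cx$ to conclude that all points are $d$-equidistant from $\dot{0}$, whereas you use a single doubling map together with the classification of isometries of $\langle\mathbb{R},d_{\mathrm{Euc}}\rangle$ and conjugacy invariants; both work.
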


\begin{proof}
Suppose toward a contradiction that we have interpretations $t:Reals_{Top}\leftrightarrow Reals_{Met}:s$
witnessing definitional equivalence. Then given some $\mathcal{A}=\langle X,\mathcal{T}\rangle$
satisfying $Reals_{Top}$ we see that since $s\circ t(\mathcal{A})=\mathcal{A}$
that $\mathcal{A}$ and $t(\mathcal{A})$ must have the same domain;
or more formally, we must have $t(\mathcal{A})=\langle X,d\rangle$.
This means that $d$ is definable in $V(\mathcal{A})$ and so for
all automorphisms $\sigma:\mathcal{A}\cong\mathcal{A}$ and $x,y\in X$
we have 
\[
d(x,y)=\epsilon\ \Leftrightarrow\ d(\sigma(x),\sigma(y))=\epsilon.
\]
We claim this is impossible. To see this, we'll consider automorphisms
that stretch the distance between points in $X$. Let's work through
the details. 

Since $\langle X,\mathcal{T}\rangle$ satisfies $Reals_{Top}$ we
may fix a homeomorphism $h:\langle\mathbb{R},\mathcal{T}_{int}\rangle\cong\langle X,\mathcal{T}\rangle$.
To make things a little less busy on the page, let us write $\dot{z}$
instead of $h(z)$ when $z\in\mathbb{R}$.  Next for all $c\in\mathbb{R}$,
let $\sigma_{c}:X\to X$ be such that for all $x\in X$, $\sigma_{c}(x)=h\circ(x\mapsto cx)\circ h^{-1}$.
Thus, informally, we take $x$ into the kernel's $\mathbb{R}$, multiply
it by $c$, and then bring the result back to $X$. But then we see
that for very $c\in\mathbb{R}$
\begin{align*}
d(\dot{0},\dot{1})=\epsilon & \Leftrightarrow d(\sigma_{c}(\dot{0}),\sigma_{c}(\dot{1}))=\epsilon\\
 & \Leftrightarrow d(\dot{0},\dot{c})=\epsilon.
\end{align*}
This means that every point other than $\dot{0}$ is the same $d$-distance
away from $\dot{0}$; i.e., for all $y,z\in X$ if $y\neq\dot{0}\neq z$,
then $d(\dot{0},y)=d(\dot{0},z)$. Thus, there is obviously no isometry
between $\langle X,d\rangle$ and $\langle\mathbb{R},d_{Euc}\rangle$
where $d:\mathbb{R}\times\mathbb{R}\to\mathbb{R}$ is the Euclidean
metric. So $t(\mathcal{A})$ does not satisfy $Real_{Met}$ and we
have our contradiction. 
\end{proof}
I think this little proof aligns with our intuition that $Reals_{Top}$
and $Reals_{Met}$ are not interdefinable. It draws out the idea that
placing a metric on the reals is a genuine constraint over and above
its natural topology. We might say that $Real_{Met}$ possesses more
structure than $Reals_{Top}$. Indeed, the proof above can be easily
generalized to show us that the only metrics definable on $Reals_{Top}$
are trivial.\footnote{Strictly, a trivial metric should make the distance between any pair
of points equal to $1$, while we are just saying that they all distinct
points are the same distance apart. We'll bump into this issues again
soon. } But perhaps $Reals_{Top}$ and $Reals_{Met}$ are interdefinable
in a weaker sense. We might worry that they are bi-interpretable. 
\begin{prop}
($\mathcal{V}^{*}$) $Real_{Top}$ and $Reals_{Met}$ are not bi-interpretable.\label{prop:RealTopNMet=0000ACbi}
\end{prop}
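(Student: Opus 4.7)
The plan is to apply Lemma \ref{lem:AutoTemplate} rather than the domain-preservation argument used for Proposition \ref{prop:RealTopNMet=0000ACdefEq}, since bi-interpretability only requires that the back-and-forth interpretations land us at structures isomorphic to (rather than identical with) our starting point. Suppose toward a contradiction that interpretations $t:Reals_{Top}\leftrightarrow Reals_{Met}:s$ witness bi-interpretability. Fix any $\mathcal{A}=\langle X,\mathcal{T}\rangle\in Reals_{Top}$ and write $\mathcal{B}=t(\mathcal{A})\in Reals_{Met}$. By Lemma \ref{lem:AutoTemplate} we must have $Aut(\mathcal{A})\cong Aut(\mathcal{B})$ as abstract groups, so the whole proof reduces to showing that the two automorphism groups are structurally incompatible.

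The next step is to identify each automorphism group explicitly. On the $Reals_{Top}$ side, the characterization of structured-set isomorphism as homeomorphism noted earlier shows that $Aut(\mathcal{A})$ is the group $H$ of self-homeomorphisms of the real line with its standard topology. On the $Reals_{Met}$ side, an automorphism is a bijection $g:Y\to Y$ of the domain whose lift fixes the structure $d$; since the metric values lie in the kernel copy of $\mathbb{R}$, which by Definition \ref{def:lift} is pointwise fixed by any lift of a domain map, preservation of $d$ reduces to the condition $d(g(x),g(y))=d(x,y)$ for all $x,y\in Y$. Thus $Aut(\mathcal{B})$ is the Euclidean isometry group $E$ of $\mathbb{R}$, which is isomorphic to $\mathbb{R}\rtimes\mathbb{Z}/2$.

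The contradiction then comes from an algebraic mismatch: $E$ is metabelian---its translation subgroup is abelian, normal, and of index two---whereas $H$ is wildly non-solvable. A clean way to close the argument is to exhibit two increasing homeomorphisms of $\mathbb{R}$ that generate a non-abelian free subgroup of $H$ via a standard ping-pong argument on disjoint intervals, since no free group of rank two embeds into any metabelian group. Hence $H\not\cong E$, contradicting Lemma \ref{lem:AutoTemplate}. The main obstacle in the write-up is the bookkeeping in the second step: one has to be confident that the lift of a domain bijection acts trivially on the real numbers appearing as metric values, so that ``preserving the full structure $d$'' really is the same as ``being an isometry.'' Once that identification is made, the result drops out of the well-known algebraic gap between $\mathrm{Homeo}(\mathbb{R})$ and the Euclidean group.
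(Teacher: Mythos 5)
Your proposal is correct and follows essentially the same route as the paper: both reduce the problem to Lemma \ref{lem:AutoTemplate} and the observation that the automorphism groups of the (categorical) theories are $\mathrm{Homeo}(\mathbb{R})$ and the Euclidean isometry group $\mathbb{R}\rtimes\mathbb{Z}/2$, which are not isomorphic. The only difference is the invariant used to separate the two groups at the last step---the paper strips out the order-$2$ elements and compares an abelian group of translations with a non-abelian remainder (witnessed by $x\mapsto x+1$ and $x\mapsto 2x$), while you use the fact that the isometry group is metabelian whereas $\mathrm{Homeo}(\mathbb{R})$ contains a free subgroup of rank two---and both closings are sound.
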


\begin{proof}
Both of these theories are categorical, so it will suffice to show
that the unique (up to isomorphism) structured sets that satisfy them
have different automorphism groups. This means that we can work with
the standard representations of these structures that live in the
kernel. Let $\langle\mathbb{R},d_{Euc}\rangle$ and $\langle\mathbb{R},\mathcal{T}_{int}\rangle$
be the canonical structures\footnote{Note that these are not structured sets although, for our current
purposes, this doesn't matter.} witnessing the required isomorphisms of $Reals_{Met}$ and $Reals_{Top}$
respectively. 

Next observe that every isometry $f:\langle\mathbb{R},d_{Euc}\rangle\cong\langle\mathbb{R},d_{Euc}\rangle$
is of the form $f(x)=a+x$ or $f(x)=a-x$; i.e., translations and
reflections. Now let $\mathcal{I}$ be the isometry group on $\langle\mathbb{R},d_{Euc}\rangle$
obtained by removing the nontrivial automorphisms of order $2$; i.e.,
we remove all the reflections. This leaves us with the translations
which gives us an Abelian group. Now let $\mathcal{H}$ be the homeomorphism
group on $\langle\mathbb{R},\mathcal{T}_{int}\rangle$ obtained by
removing the non-trivial automorphism groups of order $2$. $\mathcal{H}$
is not Abelian.\footnote{To spare the weary reader a moment of scribbling, let $f(x)=x+1$
and $g(x)=2x$.} Thus the automorphism groups on $\langle\mathbb{R},d_{Euc}\rangle$
and $\langle\mathbb{R},\mathcal{T}_{int}\rangle$ cannot be isomorphic;
and so Lemma \ref{lem:AutoTemplate} tells us that $Reals_{Met}$
and $Reals_{Top}$ are not bi-interpretable.
\end{proof}
In comparison to the previous proposition, the proof above is a little
indirect.\footnote{But also note that the proof above also suffices to show that $Reals_{Top}$
and $Reals_{Met}$ are not definitionally equivalent. Nonetheless,
we think it is valuable to see how a direct proof of the failure of
definitional equivalence can work.} Nonetheless, this proposition tells us something interesting. Even
if we allow our selves to define structured sets with new domains,
we cannot obtain interpretations between $Reals_{Met}$ and $Reals_{Top}$
that return us to structured sets that are detectably isomorphic to
the ones we started with. For an application of this technique in
philosophy of physics, we have the following:
\begin{thm}
($\mathcal{V}^{*}$, \citealp{HudetzDefEquiv}) Euclidean Geometry
and Minkowski Geometry are not bi-interpretable.\label{prop:EucMink=0000ACbi}
\end{thm}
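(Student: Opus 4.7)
The plan is to follow the template of Proposition \ref{prop:RealTopNMet=0000ACbi} by reducing bi-interpretability to an automorphism-group comparison via Lemma \ref{lem:AutoTemplate}. First I would check that Euclidean Geometry and Minkowski Geometry admit axiomatizations in $\mathcal{L}_\in(D,d)$ whose models are exactly those structured sets isomorphic --- as real inner-product spaces of the appropriate signature --- to canonical copies of $\mathbb{R}^n$ equipped with the Euclidean inner product, respectively the Minkowski form, living in the kernel of $\mathcal{V}^*$. Both theories are then categorical in the sense used throughout this section, so there is a unique structured set up to isomorphism satisfying each.

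By categoricity and Lemma \ref{lem:AutoTemplate}, the proposition reduces to showing that the automorphism groups of the two canonical structured sets are not isomorphic as abstract groups. These automorphism groups are the Euclidean group $E(n) = O(n) \ltimes \mathbb{R}^n$ and the Poincar\'e group $P(n) = O(1,n-1) \ltimes \mathbb{R}^n$ respectively, so the task is to distinguish them algebraically.

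In the two-dimensional case the cleanest distinguishing invariant is torsion. The Euclidean group $E(2)$ contains the rotation by $2\pi/3$, which has order $3$. By contrast, a direct analysis of the four connected components of $O(1,1)$ shows that its only nontrivial finite-order elements have order $2$; so an element $(A,v) \in P(2)$ of order $3$ would require $A^3 = I$ (forcing $A = I$) and then $(I + A + A^2)v = 3v = 0$ (forcing $v = 0$). Thus $P(2)$ has trivial $3$-torsion while $E(2)$ does not, and this is manifestly an abstract-group-theoretic property. Lemma \ref{lem:AutoTemplate} then delivers the contradiction.

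The main obstacle is the higher-dimensional case, where both $O(n)$ and $O(1,n-1)$ contain elements of every finite order and the naive torsion count fails. Here the natural distinguishing feature is the non-compactness of the Lorentz stabilizer, but since compactness is not \emph{prima facie} a property of abstract groups one must translate it into algebraic language: for instance through the structure of maximal abelian subgroups of $P(n)/\mathbb{R}^n$, or through the observation that one-parameter boost subgroups of $O(1,n-1)$ act with unbounded orbits on the translation radical $\mathbb{R}^n$, whereas every one-parameter subgroup of $O(n)$ acts through a compact image and so has bounded orbits. I would expect Hudetz's original argument to proceed along one of these lines, identifying a structural invariant of $P(n)$ that fails in $E(n)$; once that is done, Lemma \ref{lem:AutoTemplate} again closes the proof.
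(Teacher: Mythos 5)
Your proposal matches the paper's proof exactly: both theories are categorical, so Lemma \ref{lem:AutoTemplate} reduces the claim to showing that the automorphism groups --- the Euclidean group and the Poincar\'e group --- are not isomorphic. The paper simply asserts that non-isomorphism and defers the structures to \citet{HudetzDefEquiv}, so your order-$3$ torsion argument in dimension two (which is correct) and your honest sketch of the higher-dimensional case actually supply more detail than the published proof does.
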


\begin{proof}
Descriptions of the required structures can be found in \citep{HudetzDefEquiv}.
We leave the simple task of adapting them into the $\mathcal{V}^{*}$
framework to the reader. Both theories are categorical. The automorphism
group of Euclidean geometry is called the Euclidean group, while the
automorphism group of Minkowski geometry is known as the Poincar�
group. They are not isomorphic. Thus, these theories are not bi-interpretable. 
\end{proof}
Thus far, we have only obtained inequivalences between categorical
theories. Given this we might say that we've only learned how to show
when a particular structure is not interdefinable with another. Let
us go beyond categorical theories by generalizing the example above.
Let $Metr$ be the $\mathcal{L}_{\in}(D,d)$-theory of metric spaces
and let $Metr^{ble}$ be the $\mathcal{L}_{\in}(D,d)$-theory of metrizable
spaces. More formally, $Metr$ is the theory of structured sets $\langle X,d\rangle$
where $d:X\times X\to\mathbb{R}$ is a metric. And $Metr^{ble}$ is
the theory of structured sets $\langle X,\mathcal{T}\rangle$ satisfying
$Top$ such that there is a metric $d:X\times X\to\mathbb{R}$ that
generates $\mathcal{T}$. Unlike $Reals_{Top}$ and $Reals_{Met}$,
$Metr^{ble}$ and $Metr$ are both satisfied by many different structured
sets that are not isomorphic to each other. Intuitively, we are probably
inclined to think that they should not be equivalent. 
\begin{prop}
$Metr$ and $Metr^{ble}$ are not definitionally equivalent.\label{prop:Metr=000026Metrble=0000ACDefEq}
\end{prop}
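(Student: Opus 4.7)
The plan is to mimic the automorphism argument from Proposition~\ref{prop:RealTopNMet=0000ACdefEq}, but with a single carefully chosen witness drawn from the (very large) class $Metr^{ble}$. Suppose toward contradiction that $t:Metr^{ble}\leftrightarrow Metr:s$ witness definitional equivalence. By the domain-preservation proposition stated just before Section~\ref{subsec:Definitional-equivalence-and}, the interpretation $t$ must send every $\mathcal{A}=\langle X,\mathcal{T}\rangle\in Metr^{ble}$ to a structured set of the form $t(\mathcal{A})=\langle X,d\rangle\in Metr$, where $d$ is uniformly definable in $V(\mathcal{A})$ from $D=X$ and the structure $\mathcal{T}$.

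Pick as the test structure a particular $\mathcal{A}=\langle X,\mathcal{T}\rangle$ that is homeomorphic to the standard real line $\langle\mathbb{R},\mathcal{T}_{int}\rangle$ via some $h:\mathbb{R}\cong X$. The key observation is that $\mathrm{Homeo}(\mathbb{R})$ acts $2$-transitively on $\mathbb{R}$: for any pairs $a\neq b$ and $a'\neq b'$ there is a piecewise-linear, orientation-preserving or reversing homeomorphism sending $a\mapsto a'$ and $b\mapsto b'$. Transporting through $h$, for every pair of distinct points $x,y\in X$ and every pair of distinct points $x',y'\in X$ there is an automorphism $\sigma$ of $\mathcal{A}$ with $\sigma(x)=x'$ and $\sigma(y)=y'$. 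Applying the lemma at the end of Section~\ref{sec:V*} (and using that the lifted automorphism $\sigma^{*}$ fixes the kernel, in particular every real number) to the parameter-free definition of $d$, we conclude $d(\sigma(x),\sigma(y))=d(x,y)$, so by $2$-transitivity $d(x,y)$ depends only on whether $x=y$. Hence $d$ is a constant positive multiple of the discrete metric on $X$.

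Now $\langle X,d\rangle$ has automorphism group equal to the full symmetric group $\mathrm{Sym}(X)$. Since $s$ is uniformly definable, $s(\langle X,d\rangle)=\langle X,\mathcal{T}'\rangle$ where $\mathcal{T}'$ is a topology on $X$ definable from $d$ (and $D=X$) in $V(\langle X,d\rangle)$, and hence setwise invariant under every permutation of $X$ lifted to $V(\langle X,d\rangle)$. But the pulled-back interval topology $\mathcal{T}$ is plainly not permutation-invariant: for instance, any swap of a single interior point of an open interval with a boundary point sends some $U\in\mathcal{T}$ to a set not in $\mathcal{T}$. Therefore $\mathcal{T}'\neq\mathcal{T}$, so $s\circ t(\mathcal{A})\neq\mathcal{A}$, contradicting definitional equivalence.

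The main obstacle is being careful in the middle paragraph: one must verify both the $2$-transitivity of $\mathrm{Homeo}(\mathbb{R})$ and, more importantly, that the lifted automorphism really fixes $d(x,y)\in\mathbb{R}$ \emph{as a real number} (rather than merely preserving its abstract role), which is exactly what makes the definable metric collapse to a constant on off-diagonal pairs. Everything else is bookkeeping already developed in Section~\ref{sec:V*}.
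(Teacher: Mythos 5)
Your proof is correct, but it takes a genuinely different route from the paper's. The paper argues from the $Metr$ side with a two-point domain $A=\{u,v\}$: domain preservation forces $t(\langle A,d\rangle)=\langle A,\mathcal{P}(A)\rangle$ because every finite metrizable space is discrete, so the metrics $d_{0}$ and $d_{1}$ assigning distances $1$ and $2$ respectively are both sent to the same topology, $t$ fails to be injective, and definitional equivalence fails. That argument is shorter and entirely elementary, but --- as the paper itself concedes --- it feels like a cheat exploiting a finite edge case and the scale issue. Your argument argues from the $Metr^{ble}$ side with a copy of the real line: you use the $2$-transitivity of $\mathrm{Homeo}(\mathbb{R})$, the fact that lifted automorphisms fix the kernel pointwise (so the real value $d(x,y)$ is genuinely preserved, not just its role), and the invariance lemma at the end of Section~\ref{sec:V*} to show that any definable metric collapses to a constant off the diagonal; you then finish by observing that $s$ of the resulting discrete-type metric space must return a permutation-invariant topology, which the interval topology is not. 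This is essentially the generalization of Proposition~\ref{prop:RealTopNMet=0000ACdefEq} that the paper only gestures at in a footnote (``the only metrics definable on $Reals_{Top}$ are trivial''), and it buys something real: your counterexample survives the restriction to infinite domains that the paper raises as a follow-up worry about its own finite counterexample. The one step worth writing out with care is the claim that $\sigma^{*}(d)=d$ --- this follows from the uniqueness clause in the definition of a $T$-interpretation together with the invariance lemma, since $\sigma^{*}$ is an automorphism of $V(\mathcal{A})$ fixing $D$ and $d$ setwise --- but that is bookkeeping, not a gap.
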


\begin{proof}
Suppose toward a contradiction that we have interpretations $t:Metr\leftrightarrow Metr^{ble}:s$
witnessing their definitional equivalence. Let $\mathcal{A}=\langle A,d\rangle\in Metr$
be such that $A=\{u,v\}$ where $u$ and $v$ are atoms. Since we
have definitional equivalence, the domains of $\mathcal{A}$ and $t(\mathcal{A})$
must be the same. Thus, $t(\mathcal{A})=\langle A,\mathcal{T}\rangle$.
Now since $\langle A,\mathcal{T}\rangle$ is metrizable, we must have
$\mathcal{T}=\mathcal{P}(A)$.\footnote{Any finite metric space must be discrete since we can just find open
balls just containing any particular point by using a distance smaller
than the distance between any of the finitely many pairs of points
in the space.} Let $d_{0}$ be the trivial metric that says the distance between
distinct points is $1$; and let $d_{1}$ be the metric that says
that distance between distinct points is $2$. Then $\langle A,d_{0}\rangle$
and $\langle A,d_{1}\rangle$ both satisfy $Metr$ and we must have
\[
t(\langle A,d_{0}\rangle)=\langle A,\mathcal{P}(A)\rangle=t(\langle A,d_{1}\rangle).
\]
This means $t:Metr\to Metr^{ble}$ is not an injection and so $t$
cannot witness definitional equivalence. 
\end{proof}
Again we seem to have some evidence that our framework is lining up
with intuitions about interdefinability. However, I think there might
be a natural feeling that the proof above is a bit of a cheat. We
are exploiting the fact that there is only one metrizable topology
on any finite set, while there are continuum many metrics that are
compatible with that topology. But in the example above, we might
be tempted to say that there aren't ``really'' continuum many different
metrics since they are isometric up to some scale. More specifically,
given any pair of metrics $d_{0}$ and $d_{1}$ on $A$, there will
be some $c\in\mathbb{R}$ such that $d_{0}(u,v)=c\cdot d_{1}(u,v)$.
There are a couple of things we might say about this. The first is
that we shouldn't be surprised. In moving from $Reals_{Met}$ and
$Reals_{Top}$ to $Metr$ and $Metr^{ble}$ we have taken a step away
from the concrete into a more abstract question. As such, we have
a much wider class of counterexamples available and in such situations
it is not unusual to find that the obvious counterexample feels like
a strange edge case. The second thing to note is that there is some
value in taking the ``cheat'' intuition seriously. Sometimes when
we find an answer to a question, we realize it was the wrong question
and that better questions are available. For example, we might wish
to, so to speak, wash away the effects of the scale issue described
above. It will be instructive to see how this might be done. 

If we want to wash away the scale issue, then one way of doing this
is to reformulate our theory of metric spaces in such a way that we
aren't using a particular metric, but rather, a metric up to scale.
To achieve this, an obvious thing to do is to just use all the metrics
that are identical up to scale. With this in mind, let $Metr^{*}$
be the $\mathcal{L}_{\in}(D,d)$-theory of structured sets $\mathcal{A}=\langle A,\mathcal{D}\rangle$
that says that: for all $d\in\mathcal{D}$, $d:A\times A\to\mathbb{R}$
is a metric; and for all $d_{0}\in\mathcal{D}$ and metrics $d_{1}:A\times A\to\mathbb{R}$,
\[
d_{1}\in\mathcal{D}\ \Leftrightarrow\ \exists c\in\mathbb{R}_{\geq0}\forall x,y\in A\ d_{1}(x,y)=c\cdot d_{0}(x,y).
\]

Thus given $\mathcal{A}=\langle A,\mathcal{D}\rangle$ satisfying
$Metr^{*}$, we don't have access to a particular metric but rather
a set of them that are all the same up to some scale. We might wonder
then if $Metr^{*}$ is interdefinable with $Metr^{ble}$. For quite
obvious reasons, we see that:
\begin{prop}
$Metr^{*}$ and $Metr^{ble}$ are not definitionally equivalent.
\end{prop}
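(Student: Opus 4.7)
The plan is to follow the template of Proposition~\ref{prop:Metr=000026Metrble=0000ACDefEq}: assume for contradiction that interpretations $t:Metr^{*}\leftrightarrow Metr^{ble}:s$ witness definitional equivalence, invoke the earlier proposition that such interpretations preserve domains, and then exhibit a single finite domain carrying two distinct $Metr^{*}$-structures whose $t$-images must coincide. This will contradict the injectivity of $t$ forced by $s\circ t = \mathrm{id}$.

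The new wrinkle compared with the $Metr$/$Metr^{ble}$ case is that the two-point example used there no longer suffices: any two metrics on a two-point domain are positive scalar multiples of one another, so there is only one scale-orbit $\mathcal{D}$ available and the obvious collision disappears. I would therefore move up to a three-point domain of atoms $A=\{u,v,w\}$. On $A$ the uniform metric $d_{0}$ (with $d_{0}(x,y)=1$ for $x\neq y$) and the ``colinear'' metric $d_{1}$ with $d_{1}(u,v)=d_{1}(v,w)=1$ and $d_{1}(u,w)=2$ are both metrics---the triangle inequality in the second case holds with equality---but plainly not scalar multiples of each other. Let $\mathcal{D}_{0},\mathcal{D}_{1}$ be the positive-scale orbits generated by $d_{0}$ and $d_{1}$ respectively; these are distinct since, for example, $d_{0}\in\mathcal{D}_{0}\setminus\mathcal{D}_{1}$ (every element of $\mathcal{D}_{1}$ has strictly unequal distances). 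Hence $\mathcal{A}_{i}=\langle A,\mathcal{D}_{i}\rangle$ for $i=0,1$ are two distinct structured sets satisfying $Metr^{*}$.

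With domain preservation in hand, $t(\mathcal{A}_{i})=\langle A,\mathcal{T}_{i}\rangle$ where each $\mathcal{T}_{i}$ is a metrizable topology on the finite set $A$. But every metrizable topology on a finite set is discrete, so $\mathcal{T}_{0}=\mathcal{T}_{1}=\mathcal{P}(A)$ and $t(\mathcal{A}_{0})=t(\mathcal{A}_{1})$. Applying $s$ gives $\mathcal{A}_{0}=s\circ t(\mathcal{A}_{0})=s\circ t(\mathcal{A}_{1})=\mathcal{A}_{1}$, a contradiction. The main obstacle is essentially bookkeeping: verifying that $d_{1}$ satisfies the metric axioms and that the two scale-orbits are genuinely distinct; no new conceptual machinery, automorphism argument, or appeal to category theory is required, so the proof should be as short and direct as its predecessor.
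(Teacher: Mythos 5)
Your proposal is correct and is essentially the paper's own proof: the paper likewise takes a three-atom domain, the uniform metric $d_{0}$ and a variant $d_{1}$ differing on one pair of points (so the scale-orbits $\mathcal{D}_{0}\neq\mathcal{D}_{1}$), and derives the contradiction from domain preservation plus the fact that the only metrizable topology on a finite set is discrete, so $t$ cannot be injective. The only differences are cosmetic (which pair of points receives distance $2$).
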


\begin{proof}
Suppose toward a contradiction that we have interpretations $t:Metr^{*}\leftrightarrow Metr^{ble}:s$
witnessing their definitional equivalence. Let $\mathcal{A}_{0}=\langle A,\mathcal{D}_{0}\rangle\in Metr^{*}$
be such that $A=\{u,v,w\}$ where $u,v$ and $w$ are atoms. Since
we have definitional equivalence, the domains of $\mathcal{A}_{0}$
and $t(\mathcal{A}_{0})$ must be the same. Moreover, since $t(\mathcal{A}_{0})=\langle A,\mathcal{T}\rangle$
is metrizable, we again see that have $\mathcal{T}=\mathcal{P}(A)$.
As in the proof of Proposition \ref{prop:Metr=000026Metrble=0000ACDefEq},
we are going to show that $t$ cannot be an injection. 

To see this suppose that $d_{0}\in\mathcal{D}_{0}$ is such that $d_{0}(u,v)=1$,
$d_{0}(u,w)=1$ and $d_{0}(w,v)=1$. Then let $d_{1}:A\times A\to\mathbb{R}$
be the metric that is the same as $d_{0}$ except that $d_{1}(u,v)=2$.
Let $\mathcal{D}_{1}$ be set of metrics that can be obtained from
$d_{1}$ by scaling. It is clear that $d_{0}$ cannot be obtained
from $d_{1}$ by scaling, so $\mathcal{D}_{0}\neq\mathcal{D}_{1}$.
If we $\mathcal{A}_{1}=\langle A,\mathcal{D}_{1}\rangle$ we see that
$t(\mathcal{A}_{1})=t(\mathcal{A}_{0})$ and so $t$ is not an injection. 
\end{proof}
As in the proof of Proposition \ref{prop:Metr=000026Metrble=0000ACDefEq},
we can use the reasoning above to show that there are again continuum
many metrics compatible with $\langle A,\mathcal{T}\rangle$. However
in contrast to that case, this proof doesn't appear to be cheating
with a strange edge case. After removing the scaling issue, the proof
above seems to isolate a significant difference between these theories.
In particular, we are seeing that there are structural differences,
in the form of the distances between points, that cannot be recovered
from the topology. This might give us a better reason to say that
metric spaces and metrizable spaces are not interdefinable. Of course,
this still might inspire us to ask even sharper question. For example,
the counterexamples above both rely on finite spaces which are hardly
the norm in topology. We might then ask what happens if we restrict
our attention to structured sets with infinite domains. Or we might
wonder what happens if we liberalize our equivalence criteria and
bring our second notion of interdefinability. 
\begin{problem}
Are $Metr$ and $Metr^{ble}$ bi-interpretable?
\end{problem}

I suspect they are not, but I do not know. Perhaps there is a metric
space with a sufficiently small and specific isometry group that it
cannot be replicated by the homeomorphism group of any metrizable
space. Or perhaps, as with $Bool$ and $Stone$, there is some way
of defining metric spaces from metrizable spaces that make use of
more complex domains. It seems like an interesting question. 

This concludes the initial phase of our test drive, which provides
an opportune moment to reflect on a deeper question: What is the philosophical
significance of all this? We've now seen a few examples that give
a taste of how this framework may be applied to concrete examples.
But what does it all mean? One thing I think our test drive has illustrated
is that deriving philosophical conclusions from these results is not
immediate or altogether straightforward. It's not a simple matter
of seeing that two theories are, say, definitionally equivalent and
then concluding that they are thus, interdefinable in some intuitive
sense. There is also value and insight to be gained from the proofs
and the specific interpretations they exploit. Sometimes a result
will come too easily and this may reveal something awry in our articulation
of a theory, or that some subtlety is invisible from a certain perspective.
I don't think these are weaknesses of our framework but rather, they
reveal that we have an investigative tool rather than an oracle. We
have an instrument that can be used to sharpen our intuitions and
communicate them more clearly to others. But the task of interpreting
of these results remains in the hands of their philosophically minded
users.

\subsection{A more concrete example of bi-interpretation failure and the coordinate
effect\label{subsec:A-more-concrete}}

So far our examples where bi-interpretability has failed have just
used categorical theories. In this section, we will consider a pair
of theories that are neither categorical nor bi-interpretable with
each other. Beyond filling a gap left open above, we shall also take
this example a step further and use it to illustrate what we might
call the ``coordinate effect.'' In particular, we will consider
what happens when in introduce more background structure and show
that this can have the dramatic effect of changing an inequivalence
into an equivalence. 

Let's first introduce our theories. Let $DiLo$ be the $\mathcal{L}_{\in}(D,d)$-theory
of discrete linear orders without end points. More formally, $DiLO$
is the theory of structured sets of the form $\mathcal{A}=\langle A,\prec\rangle$
such that $\prec\subseteq A\times A$ is a discrete linear so that
every point $a\in A$ has an immediate $\prec$-successor and $\prec$-predecessor.
Let $pDiLO$ be the $\mathcal{L}_{\in}(D,d)$- theory of pointed discrete
linear orders. More precisely, $pDiLO$ is the theory of structured
sets of the form $\mathcal{A}=\langle A,\prec,p\rangle$ where $p\in A$
and $\langle A,\prec\rangle$ satisfies $DiLO$. An automorphism argument
can be used to establish that:
\begin{prop}
$DiLO$ and $pDiLO$ are not bi-interpretable. 
\end{prop}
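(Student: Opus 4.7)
The plan is to apply Lemma \ref{lem:AutoTemplate} (in its symmetric form, obtained by swapping the roles of $T$ and $S$) by producing a structured set in $pDiLO$ whose automorphism group cannot be isomorphic to that of any structured set in $DiLO$. The intuitive asymmetry driving the argument is that a distinguished point can kill all automorphisms of a discrete linear order, while a bare discrete linear order without endpoints always retains at least one nontrivial automorphism, namely its successor map.

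Concretely, I would fix $\mathcal{B}=\langle B,\langle \prec, p\rangle\rangle \in pDiLO$ where $B$ is a countably infinite set of atoms and $(B,\prec,p)$ is isomorphic as a pointed linear order to $(\mathbb{Z},<,0)$. Unpacking Definition \ref{def:auto}, an element of $Aut(\mathcal{B})$ is a bijection $f:B\to B$ whose lift fixes the structure, which amounts to $f$ preserving $\prec$ and fixing $p$. A standard induction on $\mathbb{Z}$ (first to the right by repeated successor, then to the left by repeated predecessor) shows that the identity is the only such $f$, so $Aut(\mathcal{B})$ is trivial.

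Now suppose for contradiction that interpretations $t:DiLO \leftrightarrow pDiLO:s$ witness bi-interpretability. By Lemma \ref{lem:AutoTemplate} applied with $T=pDiLO$ and $S=DiLO$, we have $Aut(\mathcal{B})\cong Aut(s(\mathcal{B}))$, so $s(\mathcal{B})$ must also have trivial automorphism group. But $s(\mathcal{B})=\langle A,\prec'\rangle \in DiLO$, and in any discrete linear order without endpoints every element possesses both an immediate $\prec'$-successor and $\prec'$-predecessor. Hence the successor map $\sigma:A\to A$ is a well-defined bijection that preserves $\prec'$ (using the equivalence $a\prec' b \Leftrightarrow \sigma(a)\prec' \sigma(b)$, immediate from discreteness) and satisfies $\sigma(a)\neq a$ for every $a\in A$. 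Thus $\sigma$ is a nontrivial element of $Aut(s(\mathcal{B}))$, contradicting triviality.

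There is no genuine obstacle here: the only step worth a second look is verifying that $\sigma$ really is an automorphism in the sense of Definition \ref{def:auto}, i.e.\ that its lift fixes the structure $\prec'$. This reduces to $\sigma$ being an order automorphism of $(A,\prec')$, which is precisely what the discreteness axioms give us. The argument is in the same spirit as our handling of $Set1$ and $Set2$, with the successor map playing the role of the switching permutation.
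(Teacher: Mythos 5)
Your proposal is correct and follows essentially the same route as the paper's proof: exhibit a rigid structure in $pDiLO$ (one isomorphic to $\langle\mathbb{Z},<,0\rangle$), note that every structure in $DiLO$ admits the successor map as a nontrivial automorphism, and conclude via the automorphism-group invariance of Lemma \ref{lem:AutoTemplate}. Your additional verifications (that the pointed order is rigid, and that the successor map really is an automorphism in the sense of Definition \ref{def:auto}) are just explicit versions of steps the paper leaves to the reader.
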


\begin{proof}
We show that there is some $\mathcal{A}\in pDiLO$ whose automorphism
group is not isomorphic to the automorphism group of any $\mathcal{B}\in DiLO$.
Let $\mathcal{A}=\langle A,\prec,p\rangle$ be isomorphic $\langle\mathbb{Z},<,0\rangle$,
which is defined in the kernel in the usual way that we define the
integers. Clearly $\langle\mathbb{Z},<,0\rangle$ and $\mathcal{A}$
have no nontrivial automorphisms and are thus, rigid. But it is not
difficult to see that every $\mathcal{B}=\langle B,\prec\rangle\in DiLO$
has many nontrivial automorphisms. For example, we can simply send
every element $b\in B$ to its $\prec$-successor. 
\end{proof}
Presumably, this shouldn't be too surprising. The proposition above
seems to confirm that by adding a point to a discrete linear order,
we are adding nontrivial structure that cannot be recovered once lost.
So far so good. But in mathematical logic, questions like these are
often considered in the context of some background structure like
the natural numbers. This is particularly common in computability
theory and descriptive set theory where, for example, one might consider
the set of ill-founded trees on the natural numbers and show that
the real numbers coding them form a $\boldsymbol{\Sigma_{1}^{1}}$-complete
set.\footnote{See Theorem 27.1 in \citep{kechrisCDST} for more details. The $\boldsymbol{\Sigma_{1}^{1}}$-completeness
of this set, $IF$, shows that every other $\boldsymbol{\Sigma_{1}^{1}}$
structure on the natural numbers is can be continuously reduced to
$IF$. This notion of reduction is related to our notion of interpretation,
but is simultaneously weaker and stronger (in two different senses).
Recall that a set of reals is $\boldsymbol{\Sigma_{1}^{1}}$ if it
is definable by a $\Sigma_{1}^{1}$-formula with a real number as
a parameter. This is weaker in the sense that we are using just a
small subset of the formulae available in our framework; but it is
stronger in the sense that we are permitted to use a parameter. Similar
remarks apply to continuous reduction which can be understood as computable
in a real parameter.} We wish to find an analogue to this approach in our $\mathcal{V}^{*}$
framework. To achieve this, we shall expand augment the structures
on our structured sets in such a way as to replicate the background
scaffolding of natural numbers. We now describe the required modifications. 

First let $DiLO_{\mathbb{N}}$ be the $\mathcal{L}_{\in}(D,d)$-theory
of discrete linear orders without end points on the natural numbers.
More formally, $DiLO_{\mathbb{N}}$ is the theory of structured sets
of the form $\mathcal{A}=\langle A,\prec,\dot{0},<_{N}\rangle$ that
says $\prec,<_{N}\subseteq A\times A$ and $\dot{0}\in A$ are such
that: $\langle A,\prec\rangle$ satisfies $DiLO$; and $\langle A,\dot{0},<_{N}\rangle$
is isomorphic to $\langle\omega,\emptyset,\in\rangle$ where $\langle\omega,\emptyset,\in\rangle$
is formed from the usual objects defined in $ZFC$ which reside in
the kernel. Let $pDiLO_{\mathbb{N}}$ be the $\mathcal{L}_{\in}(D,d)$-theory
of pointed discrete linear orders without end points on the natural
numbers. More formally, $pDiLO_{\mathbb{N}}$ is the theory of structured
sets of the form $\mathcal{A}=\langle A,\prec,p,\dot{0},<_{N}\rangle$
that says $\prec,<_{N}\subseteq A\times A$ and $\dot{0},p\in A$are
such that: $\langle A,\prec,p\rangle$ satisfies $pDiLO$; and $\langle A,\dot{0},<_{N}\rangle$
is isomorphic to $\langle\omega,\emptyset,\in\rangle$. 

We see that $DiLO_{\mathbb{N}}$ and $pDiLO_{\mathbb{N}}$ are the
same a $DiLO$ and $pDiLO$ except that we demand that the domains
of these structured sets are scaffolded by the natural numbers. We
will now show that $DiLO_{\mathbb{N}}$ and $pDiLO_{\mathbb{N}}$
are not only bi-interpretable, but definitionally equivalent. While
a direct proof can be given, it will be much easier if we make use
of the following Lemma, which will be discussed further below. It
is a generalization of the Cantor-Bernstein argument into the $\mathcal{V}^{*}$
framework that is very helpful in establishing definitional equivalence. 
\begin{lem}
($\mathcal{V}^{*})$ If interpretations $t:T\to S$ and $t:S\to T$
preserve domains and are injective, then $T$ and $S$ are definitionally
equivalent.\label{lem:(V*)CB} 
\end{lem}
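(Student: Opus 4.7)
The plan is to adapt the classical Cantor--Bernstein construction from Theorem \ref{Cantor-Bernstien} to the $\mathcal{V}^*$-framework, using the domain-preservation hypothesis to keep all the bookkeeping inside a single universe $V(\mathcal{A})$. (I read the second ``$t$'' in the statement as a typo for $s$.)

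First I would set up the standard partition. Let $A_0 = T \setminus s[S]$, let $A_{n+1} = (s \circ t)[A_n]$, let $A = \bigcup_{n \in \omega} A_n$, and let $B' = t[A] \subseteq S$. I then define candidate interpretations $t' : T \to S$ by
\[
t'(\mathcal{A}) = \begin{cases} t(\mathcal{A}) & \text{if } \mathcal{A} \in A, \\ s^{-1}(\mathcal{A}) & \text{if } \mathcal{A} \notin A, \end{cases}
\]
and $s' : S \to T$ by
\[
s'(\mathcal{B}) = \begin{cases} t^{-1}(\mathcal{B}) & \text{if } \mathcal{B} \in B', \\ s(\mathcal{B}) & \text{if } \mathcal{B} \notin B'. \end{cases}
\]
These are well defined because $\mathcal{A} \notin A$ entails $\mathcal{A} \in s[S]$ (otherwise $\mathcal{A} \in A_0 \subseteq A$), and the relevant preimages are unique by injectivity of $t$ and $s$.

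The crux is verifying that $t'$ (and symmetrically $s'$) is a bona fide $T$-interpretation in the sense of Definition \ref{def:T-interpt}; that is, I need the membership predicate ``$\mathcal{A} \in A$'' to be uniformly definable inside each $V(\mathcal{A})$. Here the domain-preservation hypothesis is decisive. If $\mathcal{A} = \langle A, a \rangle$, every element of the backward chain $\mathcal{A}_0 = \mathcal{A}$, $\mathcal{B}_0 = s^{-1}(\mathcal{A}_0)$, $\mathcal{A}_1 = t^{-1}(\mathcal{B}_0), \ldots$ has domain $A$, and so appears as a structured set $\langle A, x \rangle$ with $x \in V(A)$. Moreover, the clauses ``$\mathcal{B} \in S$'', ``$s(\mathcal{B}) = \mathcal{A}$'', and ``$t(\mathcal{A}') = \mathcal{B}$'' reduce to relativizing the fixed formulas defining $S$, $\tau^s_s$, and $\tau^t_s$ to $V(A)$ with the appropriate substitution in the $d$-slot. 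Hence there is a uniform $\mathcal{L}_\in(D,d)$-formula asserting ``there exist $n \in \omega$ and a finite sequence $\langle \mathcal{A}_0, \mathcal{B}_0, \ldots, \mathcal{A}_n \rangle$ of structured sets with domain $D$ such that $\mathcal{A}_0 = \langle D, d \rangle$, each $s(\mathcal{B}_i) = \mathcal{A}_i$, each $t(\mathcal{A}_{i+1}) = \mathcal{B}_i$, and no $\mathcal{B}$ satisfies $s(\mathcal{B}) = \mathcal{A}_n$'', and this is precisely the defining condition for $\mathcal{A} \in A$ inside $V(\mathcal{A})$. The same reasoning, carried out in $V(\mathcal{B})$, yields a uniform definition of $\mathcal{B} \in B'$.

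Once $t'$ and $s'$ are confirmed as interpretations, the identities $s' \circ t'(\mathcal{A}) = \mathcal{A}$ and $t' \circ s'(\mathcal{B}) = \mathcal{B}$ fall out of the usual Cantor--Bernstein bookkeeping: when $\mathcal{A} \in A$ we have $t(\mathcal{A}) \in B'$, so $s'(t'(\mathcal{A})) = t^{-1}(t(\mathcal{A})) = \mathcal{A}$; when $\mathcal{A} \notin A$ we have $s^{-1}(\mathcal{A}) \notin B'$ (for otherwise $s^{-1}(\mathcal{A}) = t(\mathcal{A}')$ for some $\mathcal{A}' \in A$, forcing $\mathcal{A} \in A_{n+1} \subseteq A$), so $s'(t'(\mathcal{A})) = s(s^{-1}(\mathcal{A})) = \mathcal{A}$, and the dual direction is symmetric. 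The main obstacle I anticipate is the uniform-definability step: writing the formula for $A$ precisely and verifying that each backward pullback along $s$ or $t$ is genuinely a set witnessed inside $V(\mathcal{A})$ rather than requiring information about the behaviour of $s$ and $t$ on structures outside this universe. Once that is secured, the rest is the standard back-and-forth accounting imported from Theorem \ref{Cantor-Bernstien}.
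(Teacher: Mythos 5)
Your proposal is correct and follows essentially the same route as the paper's proof: both adapt the Cantor--Bernstein construction of Theorem \ref{Cantor-Bernstien}, using domain preservation to confine the backward chains to a single universe $V(A)$ and injectivity to make $s^{-1}$ and $t^{-1}$ definable partial class functions, with the classification of each $\mathcal{A}$ expressed by a uniform formula over $V(\mathcal{A})$. Your version merely spells out the partition $A=\bigcup_{n}A_{n}$ and the finite-sequence formula more explicitly than the paper's appeal to recursion on $\omega$.
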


\begin{proof}
The proof is essentially the same as that sketched for Theorem \ref{Cantor-Bernstien}.
So given a structured set $\mathcal{A}=\langle A,a\rangle$ satisfying
$T$, we just need to establish that that we can a define class functions
taking us to $s^{-1}(\mathcal{A})$, $t^{-1}\circ s^{-1}(\mathcal{A})$,
$s^{-1}\circ t^{-1}\circ s^{-1}(\mathcal{A})$ and so on for arbitrarily
many iterations such that this is well-defined. Let's start with $s^{-1}(\mathcal{A})$
and suppose that $\mathcal{B}\in S$ is such that $s(\mathcal{B})=\mathcal{A}$.
Then we want $s^{-1}(\mathcal{A})=\mathcal{B}$. To see how this works,
recall that $s(\mathcal{B})=\mathcal{A}$ when $A=t_{d}^{V(\mathcal{B})}$
and $a=t_{s}^{V(\mathcal{B})}$. Since $s$ is domain-preserving,
we see that $A=B$ and so $V(A)=V(B)$ and we only need to focus on
the structures rather than domains. Then we see that in $\mathcal{V}(\mathcal{A})$
\[
s^{-1}(\mathcal{A})=\mathcal{B}\ \Leftrightarrow\ \exists b\ (\mathcal{B}=\langle A,b\rangle\wedge a=t_{s}^{V(\mathcal{B})}).
\]
Since $s$ is injective there can be at most one such $b$, so this
gives us the definition of a partial function. A similar argument
establishes that we can define a partial function $t^{-1}$. In order
to complete the argument of Theorem \ref{Cantor-Bernstien}, we need
to find the first point at which the sequence 
\[
s^{-1}(\mathcal{A}),(s\circ t)^{-1}(\mathcal{A}),s^{-1}\circ(s\circ t)^{-1}(\mathcal{A}),(s\circ t)^{-2}(\mathcal{A}),\dots
\]
fails to be defined if there is such a point. For this purpose, we
just use transfinite recursion on the natural numbers and the class
functions $s^{-1}$ and $t^{-1}$. This gives us the ability to classify
structures $\mathcal{A}\in T$ as in the proof of Theorem \ref{Cantor-Bernstien},
and the rest of the argument is routine.
\end{proof}
\begin{prop}
$DiLO_{\mathbb{N}}$ and $pDiLO_{\mathbb{N}}$ are definitionally
equivalent.\label{prop:DilLO_NdefEqpDiLO_N}
\end{prop}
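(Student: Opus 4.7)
The strategy is to appeal to Lemma \ref{lem:(V*)CB}: it suffices to produce injective, domain-preserving interpretations in both directions, and the Cantor-Bernstein style argument sketched in the lemma then delivers definitional equivalence.

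The interpretation $s : DiLO_{\mathbb{N}} \to pDiLO_{\mathbb{N}}$ is the easy one. Given $\langle A, \prec, \dot{0}, <_N\rangle$, I map it to $\langle A, \prec, \dot{0}, \dot{0}, <_N\rangle$ — that is, take the designated point $p$ to be $\dot{0}$ itself. The relevant terms $t_d$ and $t_s$ over $\mathcal{L}_{\in}(D,d)$ are straightforward: $t_d$ returns $D$, and $t_s$ reads off $\prec, \dot{0}, <_N$ from the structure and repeats $\dot{0}$ in the new coordinate. The domain is manifestly preserved, and since the image carries $\prec,\dot{0}$ and $<_N$ unaltered, distinct inputs yield distinct images.

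The harder direction is $t : pDiLO_{\mathbb{N}} \to DiLO_{\mathbb{N}}$, which must absorb the extra datum $p$ into a $DiLO_{\mathbb{N}}$-structure on the same domain. The guiding idea is to exploit the natural number scaffolding: since $<_N$ supplies a definable bijection $\omega \to A$ with $<_N(0)=\dot{0}$, the element $p$ corresponds to a natural number $n_p := <_N^{-1}(p) \in \omega$. The rich definability of $V(\mathcal{A})$ then lets us encode the pair $(<_N, p)$ as a pair $(\dot{0}', <'_N)$, where $<'_N : \omega \to A$ is a bijection with $<'_N(0)=\dot{0}'$. A natural recipe is to set $\dot{0}' = p$ and define $<'_N$ so that its first few positions canonically record $n_p$ while its tail records the remaining $<_N$-enumeration; $\prec$ is left unchanged, giving the image $\langle A, \prec, \dot{0}', <'_N\rangle$.

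The main obstacle is genuine injectivity of this encoding. Naive attempts — for instance, putting $\dot{0}'=p$, $<'_N(1)=\dot{0}$, and then listing the remaining elements in $<_N$-order while skipping $p$ — discard the index $n_p$ and therefore collapse distinct $(<_N, p)$-pairs (sharing the same $p$ but differing in the position of $p$ within $<_N$) to the same $<'_N$. A working encoding must also write $n_p$ into $<'_N$ in a definable, recoverable way; this can be done by transporting, via $<_N$, a fixed definable pair-coding bijection $\Phi : S_\omega \times \omega \to S_\omega$ (where $S_\omega$ denotes bijections $\omega \to \omega$ fixing $0$), for example by setting $<'_N(1) = <_N(n_p{+}1)$ as a marker and filling positions $k\geq 2$ from $<_N$ via the order-preserving bijection $\omega^{+}\setminus\{n_p{+}1\}\to\{2,3,\dots\}$ given by $\Phi$. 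Once such an injective, domain-preserving $t$ is in place, together with the trivial $s$, Lemma \ref{lem:(V*)CB} yields definitional equivalence of $DiLO_{\mathbb{N}}$ and $pDiLO_{\mathbb{N}}$.
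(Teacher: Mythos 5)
Your proposal follows the paper's proof in all essentials: both arguments go through Lemma \ref{lem:(V*)CB}, take the distinguished point to be the $<_{N}$-least element $\dot{0}$ in the easy direction, and use the natural-number scaffolding to build an injective, domain-preserving interpretation in the hard direction. The only difference is where the extra datum is absorbed --- the paper re-encodes $(\triangleleft,p)$ into a new discrete order $\prec$ via an even/odd split of the coordinates, whereas you re-encode $(<_{N},p)$ into a new scaffolding $<_{N}'$ --- and your explicit concern that the index $n_{p}$ must be definably recoverable (so that merely moving $p$ to the front is not enough) is exactly the injectivity point on which both constructions turn.
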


\begin{proof}
We plan to use Lemma \ref{lem:(V*)CB} to obtain interpretations $t:DiLO_{\mathbb{N}}\leftrightarrow pDiLO_{\mathbb{N}}:s$
witnessing definitional equivalence. The trick is to use the underlying
scaffolding of natural numbers as coordinates to keep track of things.
First let us define $t^{*}:DiLO_{\mathbb{N}}\to pDiLO_{\mathbb{N}}$
by starting with some $\mathcal{A}=\langle A,\prec,\dot{0},<_{N}\rangle\in DiLO_{\mathbb{N}}$.
We then let $t^{*}(\mathcal{A})=\langle A,\prec,p,\dot{0},<_{N}\rangle$.
We let $p$ be the $<_{N}$-least element of $A$.  Next we define
$s^{*}:pDiLO_{\mathbb{N}}\to DiLO_{\mathbb{N}}$ by starting with
some $\mathcal{B}=\langle B,\triangleleft,p,\dot{0},<_{N}\rangle\in pDiLO_{\mathbb{N}}$.
We want to define $s^{*}(\mathcal{B})=\langle B,\prec,\dot{0},<_{N}\rangle\in$.
For $a,b\in A$, we let $a\prec b$ iff either: 
\begin{itemize}
\item $a$ is even and $b$ is odd; 
\item $a=2n$ and $b=2m$ for some $n,m$ and $m\triangleleft n$; or
\item $a=2n+1$ and $b=2m+1$ for some $n,m$ where $m\triangleleft n$. 
\end{itemize}
Now $t^{*}$ and $s^{*}$ are not inverses of each other, however,
it is not difficult to see that they are both injections that preserve
domains. Thus, we may use Lemma \ref{lem:(V*)CB} to obtain interpretations
$t:DiLO_{\mathbb{N}}\leftrightarrow pDiLO_{\mathbb{N}}:s$ that are
inversions of each other and thus, witness the definitional equivalence
of these theories. 
\end{proof}
Thus, we see that by adding ``coordinates'' in the form of the natural
numbers, a failure of bi-interpretability can be turned into a success
for definitional equivalence. This phenomenon is not uncommon but
warrants further discussion and investigation. I've used the term
``coordinate'' above to highlight a parallel with similar results
in physics. For example, in contrast to Theorem \ref{prop:EucMink=0000ACbi}
we see that: 
\begin{thm}
($\mathcal{V}^{*}$, \citealp{HudetzDefEquiv}) The theories of Euclidean
geometry and Minkowski geometry when endowed with coordinates are
definitionally equivalent. 
\end{thm}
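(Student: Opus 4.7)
The plan is to mirror the proof of Proposition~\ref{prop:DilLO_NdefEqpDiLO_N}: once coordinates are present, the two geometries live on ``the same'' labeled domain, and the Euclidean inner product and Minkowski form each become definable from the coordinate structure alone. First I would make precise what ``endowed with coordinates'' means inside the $\mathcal{V}^{*}$-framework. Following \cite{HudetzDefEquiv}, I would take $EucGeom_{coord}$ to be the $\mathcal{L}_{\in}(D,d)$-theory of structured sets $\mathcal{A} = \langle A, \langle \chi, d_E \rangle \rangle$ where $\chi : A \to \mathbb{R}^{4}$ is a bijection onto the kernel's $\mathbb{R}^{4}$ and $d_E$ is the Euclidean distance transported along $\chi$; and $MinkGeom_{coord}$ to be the theory of $\mathcal{B} = \langle B, \langle \chi, \eta \rangle \rangle$ where $\chi : B \to \mathbb{R}^{4}$ is again a bijection and $\eta$ is the Minkowski form transported along $\chi$. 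The key observation is that, since $\mathbb{R}^{4}$ lives in the kernel, any structured set satisfying either theory carries enough information to reconstruct both $d_E$ and $\eta$ on its domain purely by reference to the coordinate chart.

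Next I would write down two interpretations $t : EucGeom_{coord} \to MinkGeom_{coord}$ and $s : MinkGeom_{coord} \to EucGeom_{coord}$ that preserve domains. For $t$, given $\mathcal{A} = \langle A, \langle \chi, d_E \rangle \rangle$, I would define $t(\mathcal{A}) = \langle A, \langle \chi, \eta^{*} \rangle \rangle$ where $\eta^{*}$ is the Minkowski form obtained uniformly by pulling back, via $\chi$, the kernel definition $\eta(x,y) = -(x_0-y_0)^2 + \sum_{i=1}^{3}(x_i - y_i)^2$. For $s$, I would do the symmetric thing, using $\chi$ to pull back the kernel Euclidean distance. Both interpretations trivially preserve domains (the underlying set $A$ or $B$ is untouched and so is $\chi$), and both are injections because distinct coordinate charts $\chi, \chi'$ on the same domain yield distinct transported structures, while the transported form/metric is functionally determined by $\chi$.

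With that in hand, the rest is routine: I would invoke Lemma~\ref{lem:(V*)CB} to promote the two injective, domain-preserving interpretations $t$ and $s$ to a pair of mutually inverse interpretations witnessing definitional equivalence. Conceptually the argument is exactly the one used for $DiLO_{\mathbb{N}}$ and $pDiLO_{\mathbb{N}}$: the coordinate chart $\chi$ plays the role the scaffolding $\langle \dot{0}, <_{N}\rangle$ played there, and because $\mathbb{R}$ together with its arithmetic lies in the kernel, both the Euclidean and the Minkowski structure are uniformly definable from $\chi$ in the surrounding universe $V(\mathcal{A})$.

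I expect the main obstacle to be conceptual rather than technical: making sure the axiomatizations of $EucGeom_{coord}$ and $MinkGeom_{coord}$ are tight enough that the coordinate chart $\chi$ really is forced to be a bijection onto the kernel's $\mathbb{R}^{4}$ (rather than some isomorphic copy defined over the domain), since this is exactly what neutralizes the automorphism obstruction of Theorem~\ref{prop:EucMink=0000ACbi}. Once $\chi$ is anchored to the kernel, the Poincar\'e and Euclidean symmetry groups are no longer ``visible'' to either structured set as automorphism groups --- every automorphism must fix $\chi$ pointwise, hence fix the domain --- and the obstruction to bi-interpretability dissolves, yielding the much stronger definitional equivalence.
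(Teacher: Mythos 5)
The paper does not actually prove this theorem: it is stated as an import from \citet{HudetzDefEquiv}, placed immediately after Proposition \ref{prop:DilLO_NdefEqpDiLO_N} precisely to signal that the same ``coordinate effect'' is at work. So there is no in-paper proof to compare against, and your job was to reconstruct one; your reconstruction is essentially the right one and follows the template the paper intends. Two comments. First, with your formalization the appeal to Lemma \ref{lem:(V*)CB} is unnecessary: since you stipulate that $d_{E}$ (resp.\ $\eta$) is \emph{the} form transported along $\chi$, the structure is functionally determined by $\chi$, so your $t$ and $s$ are already literal mutual inverses ($s\circ t$ returns the Euclidean form pulled back along the untouched $\chi$, i.e.\ $d_{E}$ itself) and definitional equivalence falls out directly. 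The Cantor--Bernstein route is what the paper needs for $DiLO_{\mathbb{N}}$ because there the added point genuinely is extra data; here it is not.

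Second, and this is the one place where your sketch deserves a harder look: because the geometric form is determined by $\chi$, your $EucGeom_{coord}$ and $MinkGeom_{coord}$ are both just ``the theory of bijections onto the kernel's $\mathbb{R}^{4}$'' with a redundant decoration, so the theorem becomes true but nearly vacuous. A formalization closer to the paper's own move for $DiLO_{\mathbb{N}}$ (and to Hudetz) keeps the coordinate scaffold as structure \emph{on the domain} --- e.g.\ a copy of the ordered field of reals together with the four projections, required only to be \emph{isomorphic} to the kernel's coordinate structure --- rather than a literal map into the kernel. The argument still goes through because that scaffold is rigid ($\langle\mathbb{R},+,\times,<\rangle$ has no nontrivial automorphisms), so the isomorphism to the kernel copy is unique, hence definable in $V(\mathcal{A})$, and you recover your chart $\chi$ as a defined term rather than a primitive. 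Your closing paragraph shows you see that rigidity of the coordinate structure is exactly what dissolves the automorphism obstruction of Theorem \ref{prop:EucMink=0000ACbi}; I would only add that anchoring $\chi$ to the kernel by fiat is not needed for this, and that letting the scaffold be merely isomorphic to the kernel structure keeps the geometric content of the two theories non-degenerate.
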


This raises a natural question: what effect should the addition of
coordinates have on our interpretation of interdefinability results?
This is a big question that I don't propose to answer here. A satisfying
investigation would require a patient analysis involving a deft balance
of mathematical and philosophical research. Nonetheless, this phenomenon
has an interesting parallel with a well-known distinction often made
by category theorists: structural vs material set theory. Very broadly,
\emph{material} set theory is generally associated with set theories
like $ZFC$ that are articulated in a first order language with a
single two-place relation symbol, $\in$. By contrast, \emph{structural
}set theory is generally associated with set theories based on generalizations
of category theory like Lawvere's $ETCS$ \citep{Lawvere1506}. What's
the difference? Very briefly, as we've seen above, in a framework
like $ZFC$, a set is understood to have an ancestral structure of
members and members of members etc, which is known as its transitive
closure. When we established that $Set1$ and $Set2$ were sticks
equivalent in Proposition \ref{prop:Set1Set2}, we saw that permitting
free access to information about ancestral structure caused problems
in the form of counterintuitive results. By contrast, in a structural
set theory while we can talk about the members of some set, it makes
no sense to talk about the members of those members.\footnote{In $ETCS$, we say that $x$ is a member of $A$ if $x$ is an arrow
$x:1\to A$ from the terminal object into $A$. Thus, sets and their
members below to different sorts in this framework: sets are objects;
and their members are arrows. See \citep{MeadowsForRev} for more
discussion of this and how it affects the theory of forcing.} Thus, in the context of structural set theory, it doesn't make sense
to talk about the way the elements of the domain of some structure
are built up. The domain is just a set, in much the same way that
a set of atoms in $ZFCU$ is just a set. Given this, we might say
that the approach taken in this paper is a structural one, or at least,
that it occupies an interesting place between material and structural
set theory. We developed the $\mathcal{V}^{*}$ framework with the
goal of hiding material information about the domains of the structures.
We might think then of that material information as being akin to
coordinates. In particular, when we work in the ordinary $ZFC$ universe
-- as set theorists do -- we might think of ourselves as working
within the \emph{maximal} \emph{coordinate} \emph{frame }where every
set has a unique transitive closure distinguishing from any other.
This goes some way to explaining why our initial attempts at characterizing
interdefinability where so close to the brink of triviality. If we
want to understand the definability relationships between theories
and structures, then we need to be able to strip away the idiosyncratic
coordinates that we often use to scaffold them. This much is obvious
in the case of a maximal coordinate frame like the universe of $ZFC$.
But what of more modest coordinates like the natural numbers used
above? On this, I have less to say, so let's make a provocative suggestion.
Perhaps in housing structures within the natural numbers, computability
theory and descriptive set theory are missing out on a host of questions
and problems about mathematical structures that arguably get closer
to their essence. Regardless, the $\mathcal{V}^{*}$ framework may
be useful in providing a bridge between seemingly disparate research
programs conducted in material and structural set theory. 

The reader may also be concerned about our use of Cantor-Bernstein
reasoning in Proposition \ref{prop:DilLO_NdefEqpDiLO_N}. Given that
our move into the $\mathcal{V}^{*}$ framework was motivated by a
counterintuitive result based on similar reasoning, we may wonder
if we've ended up back in the same mess. I don't think that's right
but it warrants discussion. When we showed that $Set1$ and $Set2$
were sticks equivalent, we used a Cantor-Bernstein argument, but that
was not the underlying problem. As we've been discussing above, we
blamed this result on our ability to access material information about
the domains of structures. Moving to the $\mathcal{V}^{*}$ framework
addressed that problem, but -- as we see -- it did not rule out
Cantor-Bernstein arguments. Is this a bad thing? I claim that it is
not. To see this, we note that -- although it is not well-known --
Cantor-Bernstein reasoning can already be applied to ordinary relative
interpretation arguments in the familiar context of first order logic.
Here is a particularly nice example:
\begin{thm}
\citep{GalzerCBDefEq} $ZFC$ with global choice is definitionally
equivalent to $ZFC$ with global well-ordering.\footnote{The proof given on Mathoverflow just claims to give bi-interpretability,
but a quick inspection of the proof reveals that it yields more. Note
that $ZFC$ with global choice is articulated in an expansion of $\mathcal{L}_{\in}$
by a 1-place function symbol, $F$, that selects an element of every
nonempty set; and $ZFC$ with global choice is articulated in $\mathcal{L}_{\in}$
expanded by a 2-place relation symbol, $\prec$, that well-orders
the universe. Very briefly, the proof proceeds by working in an arbitrary
model of $ZFC$ and defining (in that model) injections back and forth
between the choice functions and global well-orderings on that model.
The required bijection is then obtained using a Cantor-Bernstein argument. }
\end{thm}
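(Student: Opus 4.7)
The plan is to follow the strategy indicated in the footnote: construct definable injections in both directions between global choice functions and global well-orderings on an arbitrary model of $ZFC$, then invoke a Cantor-Bernstein argument to upgrade this pair of injections to a mutually inverse pair of translations that witnesses definitional equivalence in the ordinary first-order sense. Since both translations will fix the domain, we are working inside the standard first-order framework for definitional equivalence; there is no move into the $\mathcal{V}^*$ framework here.

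The easy direction is the interpretation $s^{*}:ZFC+GWO\to ZFC+GC$. Given a model $(M,\in,\prec)$ of $ZFC+GWO$, translate the function symbol $F$ by $F^{\prec}(x)=$ the $\prec$-least element of $x$ (with $F^{\prec}(\emptyset)=\emptyset$). This is a trivially definable, domain-preserving interpretation. Injectivity of $s^{*}$ is immediate: if $\prec_{1}\neq\prec_{2}$, they disagree on some pair $\{a,b\}$, and then $F^{\prec_{1}}(\{a,b\})\neq F^{\prec_{2}}(\{a,b\})$.

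The harder direction is $t^{*}:ZFC+GC\to ZFC+GWO$. Given a global choice function $F$, the most naive recursion, which enumerates $V_{\alpha+1}\setminus V_{\alpha}$ at each successor stage by repeatedly applying $F$ to the leftover, produces a definable well-ordering $\prec^{F}$ but only queries $F$ on a restricted family of sets (truncated differences), so many distinct choice functions collapse to the same well-ordering and $t^{*}$ fails to be injective. To repair this, I would interleave coding information into the recursion: at each stage one first processes an auxiliary enumeration of coding pairs $\langle y,F(y)\rangle$ for $y\subseteq V_{\alpha}$, arranging $F(y)$ to land in a $\prec^{F}$-position that is uniformly decodable from $\prec^{F}$ alone, so that $F$ itself becomes definable from $\prec^{F}$ and hence the map $F\mapsto\prec^{F}$ is injective. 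Keeping this coding step compatible with well-foundedness of the resulting relation and with uniform first-order definability is the main technical obstacle and where the real work of the proof lies.

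Finally, with injective, domain-preserving $s^{*}$ and $t^{*}$ in hand, a standard Cantor-Bernstein classification can be carried out inside an arbitrary model of $ZFC$. For each global choice function $F$, one asks whether there is a $\prec$ with $s^{*}(\prec)=F$, then whether there is an $F'$ with $t^{*}(F')=\prec$, and so on backwards; because $s^{*}$ and $t^{*}$ are injective and uniformly definable, this chain-termination analysis is itself first-order definable in $F$ from the formulas witnessing $s^{*}$ and $t^{*}$. The classical Cantor-Bernstein recipe then partitions the global choice functions and assigns each either $t^{*}(F)$ or $(s^{*})^{-1}(F)$ according to its chain-type, yielding formulas that define mutually inverse domain-preserving interpretations $t:ZFC+GC\leftrightarrow ZFC+GWO:s$. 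These compose to the identity in each direction, which is precisely the first-order notion of definitional equivalence.
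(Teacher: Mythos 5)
Your strategy is exactly the one the paper endorses: the paper offers no proof of its own beyond the citation to Glazer's MathOverflow answer and the footnote sketch (definable injections in both directions between global choice functions and global well-orderings, then Cantor--Bernstein), and your proposal is a faithful elaboration of that sketch. Your easy direction is complete and correct, including the observation that $\prec$ is recoverable from $F^{\prec}$ via its action on doubletons, which is what makes $s^{*}$ injective and its range and inverse definable.

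That said, as a standalone proof the proposal still has two unfinished steps, and you should be explicit that the second is not merely bookkeeping. First, the injective coding $F\mapsto\prec^{F}$ is only gestured at; the idea (arrange the recursion so that the graph of $F$ is uniformly decodable from $\prec^{F}$) is the right one, but it is deferred. Second, and more seriously, you assert that the Cantor--Bernstein chain classification ``is itself first-order definable in $F$.'' This is the genuinely delicate point in the class setting: each chain element is a \emph{proper class}, definable from $F$ by a formula $\theta_{n}$ obtained by composing the inverse translations $n$ times, so ``the backward chain terminates on the choice-function side'' is prima facie an infinite disjunction $\bigvee_{n}\Theta_{n}$ rather than a single formula. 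The standard class-recursion theorem does not apply (the values of the recursion are classes, not sets), and iterating a definable class operation $\omega$-many times is not in general first-order definable --- this is where elementary-transfinite-recursion-type issues live. To close the gap one must design $t^{*}$ and $s^{*}$ to be \emph{local}, in the sense that the $n$-th chain element restricted to $V_{\alpha}$ is computed from the previous one restricted to some $V_{\beta(\alpha)}$, so that the entire chain is uniformly definable via set-sized approximating computations; your coding scheme must be built with this constraint in mind, and the proposal does not address it. Since the paper itself glosses over exactly the same point, this does not put you behind the paper's own exposition, but it is where the real content of Glazer's argument sits.
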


Assuming that we think that the theory of definitional equivalence
over models of first order logic provides a plausible analysis of
intertranslation between theories, results like the one above tell
us that Cantor-Bernstein reasoning was already part of our tool kit.
As such, there seems to be no particular reason to worry about its
use in the $\mathcal{V}^{*}$ framework. 

Before we move on, it is worth noting that the restriction to domain
preserving interpretations is necessary in Lemma \ref{lem:(V*)CB}.
To see this consider $Set1$ and $pSet2$ where $pSet2$ is the theory
of pointed pairs; i.e., structured sets of the form $\mathcal{A}=\langle A,a\rangle$
where $a\in A$ and $|A|=2$.
\begin{prop}
$Set1$ and $pSet2$ are not definitionally equivalent, but there
are injective interpretations $t:Set1\leftrightarrow pSet2:s$.
\end{prop}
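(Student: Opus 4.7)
The plan is to handle the two claims separately. The first follows immediately from the domain-preservation proposition stated just above; the second requires constructing explicit injections.

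For the first claim, the preceding proposition tells us that interpretations witnessing definitional equivalence must preserve domains. But every $\mathcal{A}\in Set1$ has a one-element domain while every $\mathcal{B}\in pSet2$ has a two-element domain, so no interpretation in either direction can preserve domains. Hence $Set1$ and $pSet2$ are not definitionally equivalent.

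For the second claim, I would construct $t:Set1\to pSet2$ and $s:pSet2\to Set1$ as follows. Going into $pSet2$ is the harder direction: given $\mathcal{A}=\langle\{e\}\rangle$, the only atom available in $V(\mathcal{A})$ is $e$, so the new two-element domain must consist of two sets built from $e$, subject to the constraints that neither lies in the transitive closure of the other (clause 1(b) of the definition of a quasi-structured set) and that $\emptyset$ does not occur in their transitive closures (clause 1(a)). Naive candidates like $\{e,\{e\}\}$ fail 1(b) and so do most iterated variants. The working idea is to use two ``parallel'' constructions built from $e$ at distinct depths, for instance
\[
P_{0}=\{e,\{\{e\}\}\},\qquad P_{1}=\{e,\{\{\{e\}\}\}\},
\]
and to set $t(\mathcal{A})=\langle\{P_{0},P_{1}\},P_{0}\rangle$. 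A direct verification confirms all four quasi-structured conditions (neither $P_{i}$ appears in the transitive closure of the other, and no $\emptyset$ occurs), so $t(\mathcal{A})$ is a quasi-structured set satisfying $pSet2$. The map $t$ is injective because $e$ is recoverable as the unique atom in the field of $t(\mathcal{A})$.

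For the other direction, I would encode the whole of $\mathcal{B}$ into the single element of a singleton: set
\[
s(\langle\{b_{0},b_{1}\},b_{0}\rangle)=\langle\{\langle\{b_{0},b_{1}\},b_{0}\rangle\}\rangle,
\]
using the Kuratowski ordered pair formed inside $V(\mathcal{B})$. Since $b_{0},b_{1}$ are atoms, no $\emptyset$ enters the transitive closure of the domain, and the singleton condition makes 1(b) vacuous, so this is a valid quasi-structured set satisfying $Set1$; injectivity is immediate because $\mathcal{B}$ is recoverable from the unique element of the domain.

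The main obstacle is the quasi-structured check for $t(\mathcal{A})$: finding two sets built from a single atom that are incomparable in the $\in$-order requires some care, because most simple constructions yield domains whose two elements end up nested. Once an appropriate pair of witnesses is in hand the rest of the argument is routine, and the composition $s\circ t$ is manifestly not the identity on $Set1$, consistent with the first claim.
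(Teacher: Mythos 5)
Your proof is correct and follows essentially the same route as the paper: the non-equivalence falls out of domain preservation together with the mismatch in domain cardinalities, and the injections are built by hand-crafting sets over the available atoms so as to avoid the transitive-closure entanglement of clause 1(b) (your witnesses $P_{0},P_{1}$ differ from the paper's $\{u,\{u\}\},\{\{u\}\}$ only in inessential detail). One small point in your favour: by packing the distinguished point into the singleton, your $s$ really is injective, whereas a version of $s$ that records only the unordered domain would identify $\langle\{m,n\},m\rangle$ with $\langle\{m,n\},n\rangle$.
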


\begin{proof}
Clearly, there cannot be domain preserving interpretations between
$Set1$ and $pSet2$ so they are not definitionally equivalent. We
define $t:Set1\to pSet2$ by taking some $\mathcal{A}=\langle\{u\}\rangle\in Set1$
and letting $t(\mathcal{A})=\langle B,b\rangle$ where we let $v=\{u\}$,
$B=\{\{u,v\},\{v\}$\} and $b=\{v\}$.\footnote{Note that we cannot just let $t(\mathcal{A})=\langle\{u,\{u\}\},u\rangle$
as we did earlier. This is because $u\in trcl(u)$ and so $\{u,\{u\}\}$
cannot be the domain of a quasi-structured set. } $t(\mathcal{A})$ is a quasi-structured set satisfying $pSet2$ and
$t$ is clearly injective. We define $s:pSet2\to Set1$ by taking
$\mathcal{B}=\langle\{m,n\},m\rangle$ and letting $s(\mathcal{B})=\langle\{\{m,n\}\}\rangle$.
$s(\mathcal{B})$ is also a quasi-structured and $s$ is obviously
injective.
\end{proof}

\subsection{Theories of rigid structures and the return of $HOD$ \label{subsec:Theories-of-rigid}}

Thus far, our test drive has gone quite smoothly. We've shown that
standard equivalence argument fit naturally into our framework and
we've developed a basic suite of tools for establishing inequivalence.
I think the framework has a lot to recommend it. Now we are going
to explore its limitations. Up until now, we have mostly be concerned
with comparing theories where at least one of those theories is satisfied
by a structured set that isn't rigid. Our goal now is to consider
theories that are only satisfied by structured sets with no nontrivial
automorphisms. We shall see that in certain circumstances this leads
to counterintuitive results. In particular, if $V=HOD$, then some
surprising equivalences hold. There is an obvious parallel to the
problems we faced in Section \ref{subsec:WhenV=00003DHOD}, however,
we shall also sketch a way around these problems. 

First, let us introduce the leading theories of this section. Let
$Sub\omega$ be the $\mathcal{L}_{\in}(D,d)$-theory of sets of natural
numbers. More precisely, we let $Sub\omega$ be the theory of structured
sets of the form $\mathcal{A}=\langle A,\dot{0},<_{\mathbb{N}},P\rangle$
where $\dot{0}\in A$, $<_{\mathbb{N}}\subseteq A\times A$, $P\subseteq A$
and $\langle A,\dot{0},<_{\mathbb{N}}\rangle$ is isomorphic to $\langle\omega,\emptyset,\in\rangle$.
Let $WO_{\mathfrak{c}}$ be the $\mathcal{L}_{\in}(D,d)$-theory of
nonempty well-orderings shorter than the continuum. More precisely,
$WO_{\mathfrak{c}}$ is the theory of structured sets of the form
$\mathcal{B}=\langle B,\prec\rangle$ where $\prec$ is a well-ordering
of $B$ whose order type is some $\alpha$ with $0<\alpha<\mathfrak{c}=2^{\aleph_{0}}$. 

Observe that $Sub_{\omega}$ and $WO_{\mathfrak{c}}$ are both such
that every structured set that satisfies them is rigid. Thus, if we
want to show that they are inequivalent, we cannot exploit the automorphism
argument strategy we've been using above. Indeed we see that under
certain conditions these theories are interdefinable. 
\begin{prop}
Suppose $V=HOD$. Then in $\mathcal{V}^{*}$, $Sub\omega$ and $WO_{\mathfrak{c}}$
are bi-interpretable.\label{prop:SubOmegaWOc=0000ACbint} 
\end{prop}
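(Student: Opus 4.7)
The key idea is to leverage $V=HOD$ in the kernel of $\mathcal{V}^{*}$ to fix a definable bijection $\Phi:\mathcal{P}(\omega)\to\{\beta:0<\beta<\mathfrak{c}\}$. Under $V=HOD$ the universe has a definable global well-ordering; restricting it to $\mathcal{P}(\omega)$ and collapsing via the definable bijection between two sets of cardinality $\mathfrak{c}$ yields such a $\Phi$, after a harmless shift to avoid $0$. The plan is to use $\Phi$ as a dictionary between the datum of a $Sub\omega$-structure (a subset $P^{*}\subseteq\omega$) and the datum of a $WO_{\mathfrak{c}}$-structure (an order type below $\mathfrak{c}$).

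For $t:Sub\omega\to WO_{\mathfrak{c}}$: given $\mathcal{A}=\langle A,\dot{0},<_{\mathbb{N}},P\rangle$, rigidity of $\langle A,\dot{0},<_{\mathbb{N}}\rangle$ makes the isomorphism $f_{\mathcal{A}}:\omega\to A$ definable in $V(\mathcal{A})$, so $P^{*}:=f_{\mathcal{A}}^{-1}[P]$ is a kernel-definable subset of $\omega$. Set $\alpha:=\Phi(P^{*})$. To produce a quasi-structured well-ordering of type $\alpha$ inside $V(\mathcal{A})$, fix $a_{0}:=f_{\mathcal{A}}(0)$, pull the kernel's well-order of $\mathcal{P}(\omega)$ back through $f_{\mathcal{A}}$ to get a definable well-ordering of $\mathcal{P}(A\setminus\{a_{0}\})\setminus\{\emptyset\}$ of type $\mathfrak{c}$, take the initial $\alpha$-segment $C_{\alpha}$, and set $B:=\{\{a_{0},X\}:X\in C_{\alpha}\}$ with $\prec$ inherited from $C_{\alpha}$. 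Each element of $B$ is a pair of an atom with a nonempty set of atoms, so the quasi-structured set clauses follow by routine unpacking of Kuratowski pairs.

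For $s:WO_{\mathfrak{c}}\to Sub\omega$: given $\mathcal{B}=\langle B,\prec\rangle$, compute the order type $\alpha_{\mathcal{B}}$ of $\prec$ in $V(\mathcal{B})$ and set $P^{*}:=\Phi^{-1}(\alpha_{\mathcal{B}})$. The delicate case is when $B$ is finite: $V(\mathcal{B})$ then has only finitely many atoms, yet still must host a countable quasi-structured domain that can carry an $\omega$-scaffold. I handle this by fixing $b_{0}$ to be the $\prec$-least element of $B$ and defining, for $n<\omega$, $c_{n}:=\{b_{0},\iota^{n+1}(b_{0})\}$, where $\iota^{k}(x)$ denotes $x$ wrapped in $k$ singletons. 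A direct check shows that $\emptyset$ never enters $trcl(c_{n})$, that distinct $c_{n},c_{m}$ are incomparable under the transitive-closure relation, and that every upward $\in$-path from $b_{0}$ or an intermediate $\iota^{k}(b_{0})$ is forced through some $c_{n}$ before it can reach the encoded structure (the only supersets of $\iota^{k}(b_{0})$ inside the relevant transitive closure are $\iota^{k+1}(b_{0})$ and $c_{k-1}$, and a finite path must eventually step sideways into the $c_{n}$-layer). Then $A:=\{c_{n}:n<\omega\}$, $\dot{0}:=c_{0}$, $<_{\mathbb{N}}$ orders the $c_{n}$'s by index, and $P:=\{c_{n}:n\in P^{*}\}$.

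Bi-interpretability then follows because both compositions preserve the encoded datum by construction: $s\circ t(\mathcal{A})$ recovers $P^{*}$ and $t\circ s(\mathcal{B})$ recovers $\alpha_{\mathcal{B}}$. The witnessing isomorphisms are the obvious transport maps: $\eta_{\mathcal{A}}$ sends $f_{\mathcal{A}}(n)$ to $c_{n}^{s\circ t(\mathcal{A})}$, and $\nu_{\mathcal{B}}$ is the unique order-isomorphism between two well-orderings of type $\alpha_{\mathcal{B}}$; the lifts of these bijections clearly carry the respective structure tuples to one another. The main obstacle throughout is not the logical content but the combinatorial bookkeeping: designing the encodings $\{a_{0},X\}$ and $\{b_{0},\iota^{n+1}(b_{0})\}$ so that the constructed domains actually meet the quasi-structured set conditions — in particular clause 2(b), which forbids $\in$-paths that bypass the domain — while keeping the defining formulae uniform across all structures in the source theory.
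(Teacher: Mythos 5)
Your proposal is correct and follows essentially the same route as the paper's proof: use $V=HOD$ to fix a definable dictionary between $\mathcal{P}(\omega)$ and the ordinals below $\mathfrak{c}$, extract the kernel datum from a given structured set via the unique (hence definable) isomorphism with its rigid kernel anchor, translate it through the dictionary, and export the resulting kernel structure back out as a quasi-structured set built over an element of the original domain, with the witnessing isomorphisms definable by rigidity. The only differences are in the combinatorial encodings used for that export step (the paper wraps an ordinal-style recursion over $\dot{0}$ in singletons, you use pairs $\{a_{0},X\}$ and iterated singletons of $b_{0}$), and these are interchangeable; if anything, your explicit shift of $\Phi$ to avoid order type $0$ is slightly more careful than the paper on the nonemptiness requirement of $WO_{\mathfrak{c}}$.
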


\begin{proof}
We aim to show that there are interpretations $t:Sub\omega\leftrightarrow WO_{\mathfrak{c}}:s$
and suitably definable isomorphisms $\mu,\nu$ such that: 
\begin{itemize}
\item $\mu^{V(\mathcal{A})}:\mathcal{A}\cong s\circ t(\mathcal{A})$ for
all $\mathcal{A}\in Sub\omega$; and
\item $\nu^{V(\mathcal{B})}:\mathcal{B}\cong t\circ s(\mathcal{B})$ for
all $\mathcal{B}\in WO_{\mathfrak{c}}$. 
\end{itemize}
For $t$, let us start with some $\mathcal{A}=\langle A,\dot{0},<_{\mathbb{N}},P\rangle$.
We want to define $\mathcal{B}=\langle B,\prec\rangle$ satisfying
$WO_{\mathfrak{c}}$. Fix $P^{*}\subseteq\omega$ such that $\mathcal{A}$
is isomorphic to $\langle\omega,\emptyset,\in,P^{*}\rangle$ in the
kernel. Since $V=HOD$, there is a definable enumeration of the subsets
of $\omega$ of length $\mathfrak{c}$. Let $P^{*}$ be the $\alpha^{th}$
subset of $\omega$ in this ordering. Then $\langle\alpha,\in\rangle$
is a well-ordering of order type $<\mathfrak{c}$. However, $\langle\alpha,\in\rangle$
is not a structured set since its domain is in the kernel. To address
this, we ``convert'' $\langle\alpha,\in\rangle$ into a structured
set. Recall that $\dot{0}\in A$ is an atom. We next define a sequence
$\langle B_{\beta}^{*}\rangle_{\beta\leq\alpha}$ of sets by transfinite
recursion such that: 
\begin{align*}
B_{0}^{*} & =\{\dot{0}\}\\
B_{\beta+1}^{*} & =B_{\beta}^{*}\cup\{B_{\beta}^{*}\}\\
B_{\lambda}^{*} & =\bigcup_{\beta<\lambda}B_{\beta}^{*}\text{ for limit }\lambda.
\end{align*}
Then $\langle B_{\alpha}^{*},\in\rangle\cong\langle\alpha,\in\rangle$.
However, while $\emptyset\notin trcl(B_{\alpha}^{*})$, $trcl(B_{\alpha}^{*})$
cannot be the domain of a quasi-structured set since, for example,
$\dot{0},\{\dot{0}\}\in B_{\alpha}^{*}$ while $\dot{0}\in trcl(\{\dot{0}\})$.
We address this as follows. Let $g$ be the function on $B_{\alpha}^{*}$
such that for all $x\in B_{\alpha}^{*}$
\[
g(x)=\begin{cases}
\{x\} & \text{if }x\neq\dot{0}\\
B_{\alpha}^{*} & \text{otherwise.}
\end{cases}
\]
Then we let $B=g[B_{\alpha}^{*}]$. $B$ can be the domain of a quasi-structured
set. Finally, for $b,c\in B$, we let $b\prec c$ iff either: $b=B_{\alpha}^{*}$;
or $b=\{x\}$, $c=\{y\}$ and $x\in y$ for some $x,y$. 

For $s$, we start with some $\mathcal{B}=\langle B,\prec\rangle\in WO_{\mathfrak{c}}$.
We aim to define $\mathcal{A}=\langle A,\dot{0},<_{\mathbb{N}},P\rangle$.
Let $\alpha<\mathfrak{c}$ be the order type of $\mathcal{B}$. Analogously
to the previous case, let $P^{*}\subseteq\omega$ be the $\alpha^{th}$
subset of $\omega$ in the definable well-ordering given by $V=HOD$.
Then $\langle\omega,\emptyset,\in,P^{*}\rangle$ is like a quasi-structured
set satisfying $Sub\omega$ except that $\emptyset\in trcl(\omega)$.
This can be addressed in much the same way we as we did above. Since
$B$ is nonempty, we may fix the $\prec$-least element of $B$ and
call it $\dot{0}$. Now as in the previous case we build the ordinals
up to $\omega$ over $\dot{0}$ instead of $\emptyset$. Call this
$A_{\omega}^{*}$. Then following the argument of the previous case,
we may define $A$ and $<_{\mathbb{N}}$ by sending appropriate elements
of $A_{\omega}^{*}$ to their singletons. This gives us a quasi-structured
set $\langle A,\dot{0},<_{\mathbb{N}}\rangle$ that isomorphic to
$\langle\omega,\emptyset,\in\rangle$. Moreover, such an isomorphism
is unique. Call it $h$ and let $P=h``\omega\subseteq A$. Then $\mathcal{A}=\langle A,\dot{0},<_{\mathbb{N}},P\rangle$
satisfying $Set\omega$ as required. 

Finally, it is not difficult to see that $\mathcal{A}\cong s\circ t(\mathcal{A})$
and $\mathcal{B}\cong t\circ s(\mathcal{B})$ for all $\mathcal{A}\in Sub\omega$
and $\mathcal{B}\in WO_{\mathfrak{c}}$. Since these isomorphisms
are unique, they are clearly definable and so the proof is complete. 
\end{proof}
Let's discuss the proof first. The main idea is the same as that used
in the proof of Proposition \ref{prop:HODsticks}. We use the definable
well-ordering of the kernel to assign an ordinal to every structure
in the kernel. However, since we are in the $\mathcal{V}^{*}$ framework,
a little more care is required. The structures in the kernel are not
genuine structured sets. Nonetheless, we may use use structures in
the kernel -- as we have many times above -- to define classes of
structured sets and thus, obtain theories. In the proof above, we
can use the ordinals associated with structures in order to obtain
kernel structures associated with each theory that correspond to each
other. But we then need to take the structures from the kernel and
turn them into structured sets. This is where most of the work goes
into the proof above. 

What should we make of this result? Perhaps this does not line up
well with our intuitions.\footnote{For an arguably more counterintuitive result, we might compare $Sub\omega$
with $WO_{\aleph_{1}}$ where the latter is the theory of well-orderings
below $\aleph_{1}$. Under the assumption that $V=L$, essentially
the argument above establishes that they are bi-interpretable. However,
such a bi-interpretation would clearly witness that the continuum
hypothesis holds.} $WO_{\mathfrak{c}}$ is a theory of well-orderings below a certain
length and $Sub\omega$ is the theory of arbitrary subsets of the
naturals. While they have the same cardinality, it seems odd to think
that they are interdefinable. Of course, this is an artifact of our
assumption that $V=HOD$. But just as we saw in Section \ref{subsec:WhenV=00003DHOD},
this means that if we develop our $\mathcal{V}^{*}$ framework using
$ZFC$, then we cannot prove that $Sub\omega$ and $WO_{\mathfrak{c}}$
are not interdefinable. Perhaps this isn't such a big problem. Perhaps
it's reasonable to think they are interdefinable when so much is definable
under the assumption $V=HOD$ and we just have to accept that $ZFC$
doesn't rule out the possibility that $V=HOD$. But then we'd still
want to know that $Sub\omega$ and $WO_{\mathfrak{c}}$ could also
fail to be interdefinbable. Or in other words, we'd want to know that
the bi-interpretability of these theories in $\mathcal{V}^{*}$ is
independent of $ZFC$. A well-known forcing argument establishes that
this is the case.
\begin{prop}
If $ZFC$ is consistent then it's consistent in $\mathcal{V}^{*}$
that $Sub\omega$ and $WO_{\mathfrak{c}}$ are not bi-interpretable.\label{prop:SubWO=0000ACbi}
\end{prop}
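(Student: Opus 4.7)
(\emph{Sketch only}) The plan is to show that bi-interpretability of $Sub\omega$ and $WO_{\mathfrak{c}}$ in the $\mathcal{V}^{*}$-framework would force every subset of $\omega$ in the kernel to be ordinal-definable, and then to destroy that feature by adjoining a Cohen real. I would begin with any model $V$ of $ZFC$ and pass to the extension $V[c]$ obtained by adding a single Cohen real $c\subseteq\omega$. Building $\mathcal{V}^{*}$ inside $V[c]$ gives a framework whose kernel is (a copy of) $V[c]$ itself.

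Suppose for contradiction that interpretations $t:Sub\omega\leftrightarrow WO_{\mathfrak{c}}:s$ witness bi-interpretability in this $\mathcal{V}^{*}$, with accompanying uniformly definable isomorphisms $\eta,\nu$. For each $P\subseteq\omega$ in the kernel, I would define $\alpha(P)$ to be the order type of $t(\mathcal{A})$, where $\mathcal{A}$ is any structured set in $Sub\omega$ isomorphic to $\langle\omega,\emptyset,\in,P\rangle$. Since interpretations preserve isomorphism, $\alpha(P)$ is independent of the choice of $\mathcal{A}$, and since every structured set in $WO_{\mathfrak{c}}$ has order type below $\mathfrak{c}$, this yields a function $\alpha:\mathcal{P}(\omega)\to\mathfrak{c}$.

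The two things to verify are that $\alpha$ is (i) parameter-free definable in $V[c]$ and (ii) injective. For (i), the formulas $\tau_{d},\tau_{s}$ comprising $t$ carry no free variables beyond $D,d$, the class $\mathcal{V}^{*}$ is itself parameter-free definable in the ambient set theory, and the auxiliary notions of ``structured set in $Sub\omega$ coding $P$'' and ``order type of a well-ordering'' translate to parameter-free formulas. For (ii), if $\alpha(P_{1})=\alpha(P_{2})$ then $t(\mathcal{A}_{1})\cong t(\mathcal{A}_{2})$ as well-orderings of the same order type, so applying $s$ and composing with $\eta^{V(\mathcal{A}_{i})}:\mathcal{A}_{i}\cong s\circ t(\mathcal{A}_{i})$ yields $\mathcal{A}_{1}\cong\mathcal{A}_{2}$; since $\langle\omega,\emptyset,\in\rangle$ is rigid, this forces $P_{1}=P_{2}$.

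From (i) and (ii), $\alpha$ would be a parameter-free definable injection of $\mathcal{P}(\omega)^{V[c]}$ into the ordinals, whence $\mathcal{P}(\omega)^{V[c]}\subseteq OD^{V[c]}$. But $c$ is Cohen-generic over $V$, and a standard symmetry argument shows $c\notin OD^{V[c]}$: any formula defining $c$ from ordinal parameters would be forced by some finite condition $p$, while a Cohen automorphism fixing $p$ and moving $c$ preserves that formula, contradicting uniqueness. The main obstacle I anticipate is making (i) fully rigorous: one has to carefully unwind how quantification inside $\mathcal{V}^{*}$ over atoms, structured sets, and elements of $V(\mathcal{A})$ translates into parameter-free quantification in the kernel, verifying along the way that neither the interpretation formulas nor the functions $\eta,\nu$ smuggle in hidden parameters.
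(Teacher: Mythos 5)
Your proposal is correct and is essentially the paper's own argument: both add a single Cohen real, extract from the assumed bi-interpretation a parameter-free definable injection of $\mathcal{P}(\omega)$ into $\mathfrak{c}$, and kill it with an automorphism of the Cohen poset fixing a condition while moving the canonical name for the generic. Your detour through ``every real would be $OD$'' and ``the Cohen real is not $OD$'' is just a repackaging of the paper's direct non-injectivity contradiction, though you are usefully more explicit than the paper about how the injection is obtained (via order types, the definable isomorphisms $\eta$, and the rigidity of $\langle\omega,\emptyset,\in\rangle$).
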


\begin{proof}
First let $G$ be $Add(\omega,1)$-generic over $V$ and let us work
in $V[G]$. Suppose toward a contradiction that $Sub\omega$ and $WO_{\mathfrak{c}}$
are bi-interpretable in $\mathcal{V}^{*}$. Then we can use the interpretations
to show that there is a bijection $f:\mathcal{P}(\omega)\to\mathfrak{c}$
that is definable $V[G]$. Let $\Phi(x,y)$ be a formula in $\mathcal{L}_{\in}$
such that for $X\subseteq\omega$ and $\alpha<\mathfrak{c}$ 
\[
f(X)=\alpha\ \Leftrightarrow\ V[G]\models\Phi(X,\alpha).
\]

Now let 
\[
\dot{X}=\{\langle\check{n},p\rangle\in\check{\omega}\times Add(\omega,1)\ |\ p(n)=1\}.
\]
Then it is not difficult to see that $\Vdash\dot{X}\subseteq\omega\wedge x\notin\check{V}$.
Then there must be some $\alpha<\mathfrak{c}$ such that $f(\dot{X}_{G})=\alpha$
and so we may fix $p\in G$ such that: 
\[
p\Vdash\Phi(\dot{X},\check{\alpha}).
\]
Let us now fix an automorphism $\sigma:\mathbb{P}\cong\mathbb{P}$
such that $\sigma(p)=p$ and $p\Vdash\dot{X}\neq\sigma\dot{X}$.
Then we see that\footnote{We rely here on the fact that automorphisms of any poset $\mathbb{P}$
naturally lift to something like an automorphism on $\mathbb{P}$-names,
$\dot{x}\in V^{\mathbb{P}}$. Moreover, it can be seen that $p\Vdash\psi(\dot{x},\dot{y})$
iff $\sigma(p)\Vdash\psi(\sigma\dot{x},\sigma\dot{y})$. See Lemma
5.13 in \citep{jechAC} and Lemma VII.7.13 in \citep{KunenST} for
more details. } 
\[
\sigma(p)\Vdash\Phi(\sigma\dot{X},\sigma\check{\alpha}).
\]
But $\sigma(p)=p$ and $\sigma\check{\alpha}=\check{\alpha}$, so
in $V[G]$ if we let $X=\dot{X}_{G}$ and $X^{*}=(\sigma\dot{X})_{G}$,we
see that: $f(X)=\alpha$; $f(X^{*})=\alpha$; and $X\neq X^{*}$,
which means that $f$ is not an injection.
\end{proof}
Thus, we see that although we cannot prove that $Sub\omega$ and $WO_{\mathfrak{c}}$
are not bi-interpretable, we cannot prove they are bi-interpretable
either. If it seems unpalatable that such questions remain undecidable,
we might propose a provocative response. If we think our foundation
of mathematics -- understood as something like a coarse model for
mathematical practice -- should give be able to answer straightforward
questions about interdefinability and, in particular, that it should
answer the one above negatively, then perhaps we shouldn't use a foundational
theory that assumes so much of the mathematical world can be defined.
Perhaps we should reject that $V=HOD$. 

We should also discuss the proof. We noted earlier that there was
no hope of using our automorphism strategy to show that $Sub\omega$
and $WO_{\mathfrak{c}}$ aren't bi-interpretable but it's plain to
see that an automorphism strategy was nonetheless employed. We might
say that in the absence of the ability to find automorphisms of our
target structures in our current universe, we expanded the universe
and exploited automorphisms of the universe itself. Even in the context
of rigid structures, automorphisms still seem to play a key role in
this framework. The argument used above is a very simple one in this
context. For instance, it relies on the fact that ordinals -- as
residents of the ground universe cannot be changed by automorphisms
of a forcing poset. It would be interesting to see more sophisticated
examples of theories that can be forced to not be bi-interpretable.

\subsection{Comparing particular rigid structures\label{subsec:Comparing-particular-rigid}}

We finish the test drive with a crash test where we'll explore some
deeper limitations of the $\mathcal{V}^{*}$-framework that cannot
be addressed without modifications, which we'll discuss below. In
the examples of the previous section, we investigated theories of
rigid structures and -- with the aid of forcing -- we were able
to recover some form of inequivalence and thus a non-trivial equivalence
relation. In this section, we will discuss categorical theories of
rigid structures; i.e., theories that are satisfied by exactly one
rigid structure up to isomorphism. We might understand this as a way
of comparing a pair of mathematical structures, rather than theories.
We shall see that our framework is all but trivial here. We'll start
with a pair of simple, but counterintuitive, examples concerning bi-interpretability
and definitional equivalence. Then we'll discuss hows these counterintuitive
results can be generalized. Finally, we'll talk about how our framework
might be modified to address this. 

Let's start with a pair of familiar theories. Let $Nat$ be the the
$\mathcal{L}_{\in}(D,d)$-theory of the natural numbers. More formally,
we let $Nat$ be the theory of structured sets of the form $\mathcal{A}=\langle A,\dot{0},<_{\mathbb{N}}\rangle$
where $\mathcal{A}$ is isomorphic to $\langle\omega,\emptyset,\in\rangle$.
Let $Aleph1$ be the theory of the least uncountable ordinal. More
formally, we let $Aleph1$ be the theory of structured sets of the
form $\mathcal{B}=\langle B,\prec\rangle$ where $\mathcal{B}$ is
isomorphic to $\langle\aleph_{1},\in\rangle$. 

Note that $Nat$ and $Aleph1$ are both categorical theories of a
single rigid structure up to isomorphism. Given that $Nat$ talks
about a countable structure and $Aleph1$ talks about one that is
uncountable, it doesn't seem like they should be interdefinable. Nonetheless,
we have the following:
\begin{prop}
($\mathcal{V}^{*}$) $Nat$ and $Aleph1$ are bi-interpretable. 
\end{prop}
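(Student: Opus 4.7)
The plan is to imitate the construction in the proof of Proposition \ref{prop:SubOmegaWOc=0000ACbint}, with $\aleph_1$ playing the role that the variable $\alpha$ played there, exploiting the fact that the kernel — and in particular canonical copies of $\omega$ and $\aleph_1$ — is available inside every $V(\mathcal{A})$. Crucially, no analogue of $V = HOD$ is needed here, because we are not enumerating anything: both $\omega$ and $\aleph_1$ are canonical fixed points of the kernel, and both theories are categorical with rigid models, so any reasonable interpretation has essentially one choice of witness.

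First I would define $t: Nat \to Aleph1$ as follows. Given $\mathcal{A} = \langle A, \dot{0}, <_{\mathbb{N}} \rangle \in Nat$, work inside $V(\mathcal{A})$ and build $\langle B_\beta^* \rangle_{\beta \leq \aleph_1}$ by transfinite recursion, starting from $B_0^* = \{\dot{0}\}$, with $B_{\beta+1}^* = B_\beta^* \cup \{B_\beta^*\}$ and $B_\lambda^* = \bigcup_{\beta < \lambda} B_\beta^*$ at limits. Then $\langle B_{\aleph_1}^*, \in \rangle \cong \langle \aleph_1, \in \rangle$. Applying the $g$-trick from Proposition \ref{prop:SubOmegaWOc=0000ACbint} — $g(x) = \{x\}$ for $x \neq \dot{0}$, and $g(\dot{0}) = B_{\aleph_1}^*$ — produces a legitimate quasi-structured domain $B = g[B_{\aleph_1}^*]$ with a transported order $\prec$, and I set $t(\mathcal{A}) = \langle B, \prec \rangle$. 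Dually, $s: Aleph1 \to Nat$ takes $\mathcal{B} = \langle B, \prec \rangle$, picks the $\prec$-least element as a new base atom, and runs the same construction for $\omega$ steps to produce a structured set isomorphic to $\langle \omega, \emptyset, \in \rangle$.

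The witnessing isomorphisms $\eta$ and $\nu$ come essentially for free from categoricity and rigidity. Because $Nat$ has a unique rigid model up to isomorphism, the isomorphism $\mathcal{A} \cong s \circ t(\mathcal{A})$ is the unique order-isomorphism between the two $<_{\mathbb{N}}$-orders, and it is uniformly definable by transfinite recursion on $<_{\mathbb{N}}$. The definition of $\nu$ is analogous, using $\prec$ on the $\aleph_1$ side.

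The main obstacle is purely bookkeeping: one must verify that the output of the $g$-trick really is a quasi-structured set, i.e.\ that $\emptyset \notin trcl(B)$ and that no two elements of $B$ are related by inclusion in transitive closure, and check that the order $\prec$ transported through $g$ has order type exactly $\aleph_1$. These verifications are formally identical to those already carried out in Proposition \ref{prop:SubOmegaWOc=0000ACbint}; the only novelty is running the recursion out to $\aleph_1$ rather than to a variable $\alpha < \mathfrak{c}$, which is unproblematic since $\aleph_1 \in V(\mathcal{A})$ and transfinite recursion up to any ordinal is available in $\mathcal{V}^*$. The counterintuitive sting of the conclusion — that a canonical countable structure should be bi-interpretable with a canonical uncountable one — is precisely the point the author is setting up: the automorphism-based obstruction from Lemma \ref{lem:AutoTemplate} is vacuous for rigid structures, and nothing else in the $\mathcal{V}^*$ framework stands in the way.
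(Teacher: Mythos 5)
Your proposal is correct and follows essentially the same route as the paper's own proof: export the canonical kernel copies of $\aleph_1$ and $\omega$ by seeding a transfinite recursion with material from the given structure's domain, repair the result into a quasi-structured set, and obtain the definable witnessing isomorphisms from rigidity, with no appeal to $V=HOD$. The only differences are bookkeeping choices (the paper seeds its recursion with the whole domain $A$ rather than $\{\dot{0}\}$ and uses the fix-up $g(x)=x\cup\{B^{*}\}$ rather than the singleton trick), which are interchangeable.
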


\begin{proof}
We define interpretations $t:Nat\leftrightarrow Aleph1:u$ and isomorphisms
$\eta,\nu$ such that:
\begin{enumerate}
\item $\eta_{\mathcal{A}}:\mathcal{A}\cong u\circ t(\mathcal{A})$ for all
$\mathcal{A}\in Nat$; and
\item $\nu_{\mathcal{B}}:\mathcal{B}\cong t\circ u(\mathcal{B})$ for all
$\mathcal{B}\in Aleph1$. 
\end{enumerate}
Let $\mathcal{A}=\langle A,\dot{0},<_{\mathbb{N}}\rangle\in Nat$.
Let $\mathcal{W}=\langle\omega_{1},\in\rangle$ be the canonical model
of the least uncountable well-ordering located in the kernel. This
is is not a quasi-structure. Let $f$ be defined by transfinite recursion
on $\omega_{1}$ be such that 
\begin{align*}
f(0) & =A\\
f(\alpha+1) & =f(\alpha)\cup\{f(\alpha)\}\\
f(\lambda) & =\bigcup_{\alpha<\lambda}f(\alpha).
\end{align*}

Let $B^{*}=f[\omega_{1}]$ and note that $\langle\omega_{1},\in\rangle\cong\langle B^{*},\in\rangle$,
but $\langle B^{*},\in\rangle$ is still not a quasi-structure. Let
$g$ be a function with domain $B^{*}$ which is such that for all
$x\in B^{*}$, $g(x)=x\cup\{B^{*}\}$. Let $B=g[B^{*}]$. Then let
$\prec\subseteq B^{2}$ be such that for all $x,y\in B$
\[
x\prec y\ \Leftrightarrow\ g^{-1}(x)\in g^{-1}(y).
\]
We then see that $\mathcal{B}=\langle B,\prec\rangle$ is a quasi-structure
and $\mathcal{B}\cong\mathcal{W}$. Let $t(\mathcal{A})=\mathcal{B}$.

Now let $\mathcal{B}=\langle B,\prec\rangle\in Aleph1$. Let $\mathcal{N}=\langle\omega,\emptyset,\in\rangle$
be the standard model of the natural numbers. Note that it is not
a quasi-structure. Define $h$ by induction on $\omega$ as follows:
\begin{align*}
h(0) & =B\\
h(n+1) & =h(n)\cup\{h(n)\}.
\end{align*}
Let $A^{*}=h[\omega]$ and note that $\langle A^{*},B,\in\rangle\cong\langle\omega,\emptyset,\in\rangle$
but $\langle A^{*},B,\in\rangle$ is still not a quasi-structure.
Let $i$ be a function with domain $A^{*}$ which is such that for
all $x\in A^{*}$, $i(x)=\{x,\{A^{*}\}\}$. Then let $A=i[A^{*}]$,
$\dot{0}=\{B,\{A^{*}\}\}$ and for $\{x,\{A^{*}\}\},\{y,\{A^{*}\}\}\in A$,
let
\[
\{x,\{A^{*}\}\}<_{\mathbb{N}}\{y,\{A^{*}\}\}\ \Leftrightarrow\ x\in y.
\]
Then note that $\mathcal{A}=\langle A,\dot{0},<_{\mathbb{N}}\rangle$
is a quasi-structure and $\mathcal{A}\cong\mathcal{N}$. Let $u(\mathcal{B})=\mathcal{A}$. 

It is easy to see that $\mathcal{A}\cong s\circ t(\mathcal{A})$ for
any $\mathcal{A}\in Nat$. Moreover, since $\mathcal{A}$ and $s\circ t(\mathcal{A})$
are rigid there is only one function witnessing this isomorphism and
so it is definable. This establishes (1) and a similar argument establishes
(2). 
\end{proof}
Parts of the proof above resemble that of Proposition \ref{prop:SubWO=0000ACbi},
however, note that we did not need to assume that $V=HOD$. This is
an outright theorem. We just find the corresponding structure in the
kernel and then, so to speak, copy and paste it outside the kernel
to form a quasi-structured set. Next we observe that if we do assume
$V=HOD$, then even definitional equivalence can occur in odd places. 

Let's introduce a couple of theories to illustrate this. Let $Reals$
be the $\mathcal{L}_{\in}(D,d)$-theory of the real numbers. More
formally, we let $Reals$ be the theory of structured sets of the
form $\mathcal{A}=\langle A,a\rangle$ where $\mathcal{A}$ is isomorphic
to $\langle\mathbb{R},r\rangle$ where this is defined in the kernel
and $r$ is some standard tuple of constants and operations that pins
down the real numbers. Let $Card\mathfrak{c}$ be the theory of the
continuum length well-ordering. More specifically, $Card\mathfrak{c}$
is the theory of structured sets of the form $\mathcal{B}=\langle B,b\rangle$
where $\mathcal{B}$ is isomorphic to $\langle\mathfrak{c},\in\rangle$. 

Note that both $Reals$ and $Card\mathfrak{c}$ are both categorical
theories of a single rigid structure up to isomorphism. Moreover,
the cardinality of any structured set satisfying $Reals$ will be
the same as the cardinality of any structured set satisfying $Card\mathfrak{c}$.
Despite the fact that their instantiations have the same cardinality,
it would seem -- I think -- odd to say they are interdefinable.
Nonetheless, if $V=HOD$ they meet our highest standard of equivalence. 
\begin{prop}
($\mathcal{V}^{*}$) Suppose $V=HOD$. Let $Reals$ be the theory
of the reals in the usual signature and $Card\mathfrak{c}$ be the
theory of the continuum length well-order. Then $Reals$ and $Card\mathfrak{c}$
are definitionally equivalent. 
\end{prop}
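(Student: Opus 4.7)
The plan is to adapt the strategy used for the bi-interpretability of $Sub\omega$ and $WO_{\mathfrak{c}}$ under $V=HOD$, strengthened by two features specific to the present case: both theories are categorical with \emph{rigid} unique models, and we aim at the stronger relation of definitional equivalence, so the interpretations must preserve domains. Rigidity of the unique kernel copies is exactly what will let the required transport maps be defined uniformly by $\mathcal{L}_{\in}(D,d)$-formulas and will force the round-trip to be the identity rather than a mere isomorphism.

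First I would use $V=HOD$ to fix a parameter-free definable bijection $\Psi : \mathbb{R} \to \mathfrak{c}$ in the kernel, between the canonical kernel structures $\mathcal{R}_{0} = \langle \mathbb{R}, r\rangle$ and $\mathcal{C}_{0} = \langle \mathfrak{c}, \in\rangle$. I would then define a domain-preserving $Reals$-interpretation $t$ as follows. Given $\mathcal{A} = \langle A, a\rangle \in Reals$, categoricity provides an isomorphism $\mathcal{R}_{0} \cong \mathcal{A}$ and rigidity forces it to be unique; hence the underlying bijection $\phi_{\mathcal{A}} : \mathbb{R} \to A$ is uniformly definable in $V(\mathcal{A})$ by its defining property. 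Set $h_{\mathcal{A}} = \Psi \circ \phi_{\mathcal{A}}^{-1}$ and let $t(\mathcal{A}) = \langle A, b\rangle$, where $b \subseteq A \times A$ is the pull-back of $\in$ along $h_{\mathcal{A}}$; then $h_{\mathcal{A}}$ witnesses $\langle A, b\rangle \cong \mathcal{C}_{0}$ as quasi-structured sets, so $t(\mathcal{A}) \in Card\mathfrak{c}$. Define $s : Card\mathfrak{c} \to Reals$ symmetrically, using the unique isomorphism $\psi_{\mathcal{B}} : \mathfrak{c} \to B$ and $\Psi^{-1}$ to transport the structure $r$ back to $B$.

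The verification that $s \circ t(\mathcal{A}) = \mathcal{A}$ should then be essentially formal. Writing $t(\mathcal{A}) = \langle A, b\rangle$, the uniqueness of the isomorphism $\mathcal{C}_{0} \cong \langle A, b\rangle$ forces $\psi_{t(\mathcal{A})} = h_{\mathcal{A}}^{-1}$, so $\psi_{t(\mathcal{A})} \circ \Psi = \phi_{\mathcal{A}}$. Since $s$ transports $r$ along precisely this composition and $\phi_{\mathcal{A}}$ already sends $r$ to $a$, we recover $\langle A, a\rangle$ on the nose; the reverse equality $t \circ s(\mathcal{B}) = \mathcal{B}$ is the mirror argument.

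The main obstacle I expect is bookkeeping rather than mathematical depth: ensuring that $\phi_{\mathcal{A}}$ and $\psi_{\mathcal{B}}$ are genuinely uniformly definable inside $V(\mathcal{A})$ and $V(\mathcal{B})$ without smuggling in parameters, and that $\Psi$ can be quoted inside each $V(\mathcal{A})$ as a purely kernel-level object independent of the ambient structured set. The first point reduces to the fact that a unique object is definable from its defining property — this is where rigidity plus categoricity does all the work — while the second is exactly what $V=HOD$ buys us. Once these are in place, the equality $s \circ t(\mathcal{A}) = \mathcal{A}$ follows from the uniqueness of the isomorphism to the kernel copy, and the proposition is established.
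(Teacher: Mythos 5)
Your proposal is correct and follows essentially the same route as the paper's own proof: fix the canonical rigid kernel copies, use $V=HOD$ to get a definable bijection between $\mathbb{R}$ and $\mathfrak{c}$, compose with the unique (hence definable) isomorphism from the kernel copy to the given structured set, and transport the structure along that composite while keeping the domain fixed. Your explicit check that uniqueness of the isomorphisms forces $s\circ t(\mathcal{A})=\mathcal{A}$ on the nose is a detail the paper leaves to the reader, and it is carried out correctly.
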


\begin{proof}
We define interpretations $t:Reals\leftrightarrow Card\mathfrak{c}:s$
witnessing definitional equivalence. For the rest of this proof we
fix the canonical model of the reals $\mathcal{R}=\langle\mathbb{R},r\rangle$
and the canonical model of the continuum well-ordering $\mathcal{W}=\langle2^{\aleph_{0}},\in\rangle$.
We suppose without loss of generality that $r$ is an $n$-ary relation
on $\mathbb{R}$. Both of these structures are elements of the kernel.
Moreover, since $V=HOD$ we may fix the $HOD$-least bijection $g:2^{\aleph_{0}}\to\mathbb{R}$.

Let $\mathcal{A}=\langle A,a\rangle\in Reals$. Then there is a unique
-- and thus, definable -- isomorphism $f:\mathcal{R}\to\mathcal{A}$.
Now we let $t(\mathcal{A})=\langle A,\prec\rangle$ where $\prec\subseteq A^{2}$
is such that for all $x,y\in A$
\[
x\prec y\ \Leftrightarrow\ (f\circ g)^{-1}(x)\in(f\circ g)^{-1}(y).
\]
Now let $\mathcal{B}=\langle B,\prec\rangle\in Card\mathfrak{c}$.
Then there is a unique and definable bijection $h:\mathcal{W}\to\mathcal{B}$.
We then let $t(\mathcal{B})=\langle B,a\rangle$ where $a\subseteq B^{n}$
is such that for all $\langle x_{1},...,x_{n}\rangle\in B^{n}$
\[
\langle x_{1},...x_{n}\rangle\in a\Leftrightarrow\langle(g\circ h^{-1})(x_{1}),...,(g\circ h^{-1})(x_{n})\rangle\in r.
\]
\end{proof}

\subsubsection{Generalizing a little}

The examples above can be generalized greatly. However, it will be
easier to illustrate this phenomena using a restricted but very natural
class of structures and after that, we'll discuss how the full generalization
works. 
\begin{defn}
($\mathcal{V}^{*}$) Let us say that a \emph{simple} \emph{structure}
is a set of the form $\mathcal{M}=\langle M,m\rangle$ when $M$ is
an element of the kernel and $m\subseteq M^{n}$ for some $n\in\omega$.
\end{defn}

Thus, a simple structure is domain with an $n$-ary relation on it,
or a first order model for a language with a single $n$-ary predicate.
This should is the ordinary situation in mathematics. The following
lemma aims to generalize the move -- we've seen above -- that allows
us to export structures in the kernel to form genuine quasi-structured
sets in $\mathcal{V}^{*}$. We call this process \emph{atomization}.
\begin{lem}
($\mathcal{V}^{*}$, Atomization) Let $\mathcal{M}=\langle M,m\rangle$
be a simple structure of arity $n$ and let $B$ be a quasi-domain.
Then there is a quasi-structured set $\mathcal{A}=\langle A,a\rangle$
that is definable in $V(B)$ from $\mathcal{M}$ such that $\mathcal{A}\cong\mathcal{M}$.\label{lem:(Simple-atomization-lemma)}
\end{lem}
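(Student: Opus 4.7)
The plan is to transport $\mathcal{M}$ out of the kernel into $V(B)\setminus V$, replacing the pure-set structure of $M$ with an isomorphic copy grounded in $B$, and then to apply a ``flattening'' step so that the transported elements are mutually incomparable under $\in$-transitive closure. Concretely, working inside $V(B)$, I would define by $\in$-recursion on pure sets $\tilde{y} := \{\tilde{z} : z \in y\} \cup \{B\}$; a routine induction using Regularity shows that $\tilde{\cdot}$ is injective on the kernel, that $\tilde{y}\neq B$ for all $y$, and that $\emptyset \notin \mathrm{trcl}(\tilde{y})$. Fix a canonical tag $\tau := \{\{B\}\}$; for each $x\in M$, let $b_x := \{\tilde{x},\tau\}$; set $A := \{b_x : x\in M\}$; and let $a := \{\langle b_{x_1},\ldots, b_{x_n}\rangle_K : \langle x_1,\ldots,x_n\rangle \in m\}$, using Kuratowski tuples. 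Everything here is uniformly definable in $V(B)$ with $B$ and $\mathcal{M}$ as parameters.

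The verification that $\langle A, a\rangle$ is quasi-structured breaks into the four clauses. Clauses 1(a) and 2(a) are easy: every set appearing in $\mathrm{trcl}(A)\cup\mathrm{trcl}(\{a\})$ either lives in $\mathrm{trcl}(B)$ (where $\emptyset$ is absent) or is built over $B$ by $\tilde{\cdot}$, $\tau$, or Kuratowski pairing, and any atom appearing is already in $\mathrm{trcl}(B)\subseteq\mathrm{trcl}(A)$. For clause 1(b), if $b_x\in\mathrm{trcl}(b_y)$ with $x\ne y$ then the doubleton $\{\tilde{x},\tau\}$ would have to occur either as some $\tilde{z}$ (blocked because every $\tilde{z}$ contains $B$ as a member while $b_x$ does not, by Regularity applied to $\tau$) or inside $\mathrm{trcl}(B)\cup\mathrm{trcl}(\tau)$ (blocked by direct inspection). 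Clause 2(b) is the fussiest: any $\in$-path from a $d$ strictly below $A$ up to $a$ must terminate through a Kuratowski tuple, and unwinding the tuple forces the path's third-to-last vertex to equal some $b_j\in A$; shorter paths are ruled out because neither a Kuratowski tuple nor a singleton/doubleton sub-component thereof can lie below $A$.

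Finally, the bijection $f: M \to A$ given by $f(x):=b_x$ has $f^+(m)=a$ by construction, witnessing $\mathcal{A}\cong \mathcal{M}$. The main obstacle, as indicated, is clauses 1(b) and 2(b) together: both hinge on ruling out accidental coincidences between the $B$-grounded hierarchy appearing inside each $\tilde{x}$ and the ambient sets of $\mathrm{trcl}(B)$ and $\mathrm{trcl}(\tau)$. These coincidences are prevented by Regularity together with the deliberate asymmetry between $\tilde{\cdot}$ (which embeds $B$ directly as a member at every level) and the tag $\tau$ (which embeds $\{B\}$ instead).
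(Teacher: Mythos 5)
Your proposal is correct and follows essentially the same route as the paper's proof: rebuild the pure sets of $\mathcal{M}$ over $B$ by $\in$-recursion (the paper sends $\emptyset\mapsto B$ at the base, you instead adjoin $B$ as a member at every level), then tag each transported element (the paper uses $\{x,\{A^{*}\}\}$, you use $\{\tilde{x},\tau\}$ with $\tau=\{\{B\}\}$) to kill unwanted $\in$-ancestry among domain elements, and finally transfer $m$ coordinatewise. Your verification of clauses 1(b) and 2(b) is more explicit than the paper's, which simply asserts them, but the underlying construction is the same.
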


\begin{proof}
Our plan is to push $M$ out of the kernel and then turn it into a
quasi-domain; and then we'll $m$ accordingly. We start by defining
an injection $f$ by recursion on the transitive closure of $A$ by
letting 
\begin{align*}
f(\emptyset) & =B\\
f(x) & =\{f(y)\ |\ y\in x\}.
\end{align*}
Then we let $A^{*}=f[trcl(A)]$. This recreates the ancestral structure
of $M$ over $B$ instead of the empty set. It doesn't yet satisfy
the conditions for being a quasi-domain, so we then let we let 
\[
A=\{\{x,\{A^{*}\}\}\ |\ x\in A^{*}\}.
\]
It can be seen that $A$ satisfies the conditions for being a quasi-domain.
Finally, we let $a\subseteq A^{n}$ be such that for all $\{x_{1},\{A^{*}\}\},...,\{x_{n},\{A^{*}\}\}\in A$
\[
\langle\{x_{1},\{A^{*}\}\},...,\{x_{n},\{A^{*}\}\}\rangle\in a\Leftrightarrow\langle f^{-1}(x_{1}),...,f^{-1}(x_{n})\rangle\in m.
\]
\end{proof}
It should be clear how to generalize this process to arbitrary models
in first order logic and beyond that to more complex structures like
topologies. Next we isolate a kind of theory that we've used many
times above and, in particular, with the pathological examples of
the current section. 
\begin{defn}
($\mathcal{V}^{*}$) Let us say that $T$ is a \emph{simple rigidly
anchored categorical theory }if there is a definable simple structure,
$\mathcal{M}$, such that $\mathcal{A}\in T$ iff $\mathcal{A}\cong\mathcal{M}$.
We call $\mathcal{A}$ the \emph{anchor} of $T$.
\end{defn}

The underlying idea is very simple. We set out a theory by defining
a canonical structure in the kernel, the anchor, with regard to which
which every structured set in that theory is isomorphic. With the
abstraction moves out of the way, generalization of the examples above
are easily obtained. 
\begin{thm}
($\mathcal{V}^{*}$) Let $T$ and $S$ be simple rigidly anchored
categorical theories. Then $T$ and $S$ are bi-interpretable.\label{thm:SimpRigCatBiInt}
\end{thm}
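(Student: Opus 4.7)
The plan is to imitate the proof of the $Nat$/$Aleph1$ case in full generality by combining the Atomization Lemma with rigidity. First I would fix, for each theory, a definable anchor: $\mathcal{M}_{T}=\langle M_{T},m_{T}\rangle$ for $T$ and $\mathcal{M}_{S}=\langle M_{S},m_{S}\rangle$ for $S$. Both are simple structures residing in the kernel, so they are parameter-freely definable from the perspective of any $V(\mathcal{A})$ we ever need to look into. Rigidity propagates outward: any quasi-structured set in $T$ is isomorphic to $\mathcal{M}_{T}$ and is therefore itself rigid, and similarly on the $S$-side.

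To define $t:T\to S$, given $\mathcal{A}=\langle A,a\rangle \in T$ I would apply Lemma \ref{lem:(Simple-atomization-lemma)} with simple structure $\mathcal{M}_{S}$ and quasi-domain $A$ to produce, definably inside $V(\mathcal{A})$, a quasi-structured set $\mathcal{B}_{\mathcal{A}}\cong \mathcal{M}_{S}$, which therefore satisfies $S$. The construction is uniform in $\mathcal{A}$ and uses only $\mathcal{M}_{S}$ (a kernel parameter, hence absolute) together with $A=D^{V(\mathcal{A})}$, so it yields formulae $\tau_{d},\tau_{s}\in \mathcal{L}_{\in}(D,d)$ defining the required $T$-interpretation in the sense of Definition \ref{def:T-interpt}. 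Define $s:S\to T$ by the mirror-image construction, atomizing $\mathcal{M}_{T}$ over the domain of an arbitrary $\mathcal{B}\in S$.

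For the isomorphisms witnessing bi-interpretability, I would exploit rigidity. For $\mathcal{A}\in T$ we have $s\circ t(\mathcal{A})\in T$, and since $s\circ t(\mathcal{A})\cong \mathcal{M}_{T}\cong \mathcal{A}$ with all three rigid, there is exactly one bijection between the domains whose lift carries $a$ to the corresponding structure. I would uniformly define $\eta^{V(\mathcal{A})}$ by the formula ``the unique $f:A\to A'$ whose lift $f^{+}_{\mathcal{A}}$ from Definition \ref{def:lift} sends $a$ to $a'$,'' where $\langle A',a'\rangle = s\circ t(\mathcal{A})$ is definable from $\mathcal{A}$ via $s$ and $t$. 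Uniqueness makes this a bona fide definable total function, and the lift mechanism promotes it to an isomorphism of quasi-structured sets in the sense of Definition \ref{def:auto}. The dual $\nu$ is produced by swapping the roles of $T$ and $S$.

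The main obstacle is purely bureaucratic: checking that the transfinite recursion used inside the proof of Lemma \ref{lem:(Simple-atomization-lemma)} can really be compressed into a single formula of $\mathcal{L}_{\in}(D,d)$ with $\mathcal{M}_{S}$ and $A$ as parameters. Because $\mathcal{M}_{S}$ lies in the kernel (and so has a fixed parameter-free definition visible in every $V(\mathcal{A})$) and because the recursion runs along the set-sized transitive closure of $M_{S}$, this is the same sort of bookkeeping used in Proposition \ref{prop:SubOmegaWOc=0000ACbint} and in the $Nat$/$Aleph1$ proof; once it is discharged, rigidity hands us the definable witnessing isomorphisms at no extra cost.
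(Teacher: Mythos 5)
Your proposal is correct and follows essentially the same route as the paper: fix the kernel anchors, use the Atomization Lemma to definably produce an isomorphic copy of the other theory's anchor over the given domain, and then invoke rigidity to conclude that the witnessing isomorphisms are unique and hence definable. The extra bookkeeping you flag about compressing the recursion into a single $\mathcal{L}_{\in}(D,d)$-formula is exactly the detail the paper also leaves implicit.
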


\begin{proof}
We show that there exist $t:T\leftrightarrow S:s$ and $\eta,\nu$
such that:
\begin{enumerate}
\item $\eta_{\mathcal{A}}:\mathcal{A}\cong s\circ t(\mathcal{A})$ for all
$\mathcal{A}\in T$; and
\item $\eta_{\mathcal{B}}:\mathcal{B}\cong t\circ s(\mathcal{B})$ for all
$\mathcal{B}\in S$. 
\end{enumerate}
Let $\mathcal{A}=\langle A,a\rangle\in T$. Now fix a simple structure
$\mathcal{\mathcal{M}}$ in the kernel such that $\mathcal{C}\in S$
iff $\mathcal{C}\cong\mathcal{\mathcal{M}}$. If we apply Lemma \ref{lem:(Simple-atomization-lemma)}
to $\mathcal{M}$ and $A$, we then obtain a quasi-structured set
$\mathcal{B}$ that is definable in $V(\mathcal{A})$. We let $t(\mathcal{A})=\mathcal{B}$.
For $\mathcal{B}\in S$, we obtain $s(\mathcal{B})$ be the same method.
It should be clear that there exist isomorphisms witnessing (1) and
(2). Moreover, since these structures are rigid, these isomorphisms
are clearly definable. 

\end{proof}
\begin{thm}
($\mathcal{V}^{*}$) Suppose $V=HOD$. Let $T$ and $S$ be simple
rigidly anchored categorical theories whose anchors have the same
cardinality. Then $T$ and $S$ are definitionally equivalent.\label{thm:SimpRigCatSameCardDefEq}
\end{thm}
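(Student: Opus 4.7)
The plan is to mimic the $Reals$/$Card\mathfrak{c}$ construction at full generality, leveraging rigidity to supply canonical isomorphisms and using $V=HOD$ to canonically match up the anchor domains. The key observation is that if $T$ is a simple rigidly anchored categorical theory with anchor $\mathcal{M}_T = \langle M_T, m_T\rangle$ in the kernel, then for every $\mathcal{A}=\langle A,a\rangle\in T$ there is a \emph{unique} isomorphism $f_{\mathcal{A}}:\mathcal{M}_T\cong\mathcal{A}$, and this unique isomorphism is definable in $V(\mathcal{A})$ from the anchor. Because $|M_T|=|M_S|$, the definable well-ordering of the kernel afforded by $V=HOD$ yields a parameter-free definable bijection $g:M_T\to M_S$, and we can use $g$ together with the unique anchor-isomorphisms to shuttle structures back and forth \emph{without} altering domains, thus chasing definitional equivalence rather than merely bi-interpretability.

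Concretely, fix the anchors $\mathcal{M}_T=\langle M_T,m_T\rangle$ and $\mathcal{M}_S=\langle M_S,m_S\rangle$, with $m_T\subseteq M_T^{n_T}$ and $m_S\subseteq M_S^{n_S}$, and let $g:M_T\to M_S$ be the $HOD$-least bijection. For $\mathcal{A}=\langle A,a\rangle\in T$, set $h_{\mathcal{A}}=f_{\mathcal{A}}\circ g^{-1}:M_S\to A$ and define $t(\mathcal{A})=\langle A,b\rangle$, where
$$b=\{\langle h_{\mathcal{A}}(y_1),\ldots,h_{\mathcal{A}}(y_{n_S})\rangle\ |\ \langle y_1,\ldots,y_{n_S}\rangle\in m_S\}.$$
By construction $h_{\mathcal{A}}:\mathcal{M}_S\cong\langle A,b\rangle$, so $t(\mathcal{A})\in S$, and $t$ preserves domains. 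Define $s:S\to T$ symmetrically, using $g^{-1}$ and the unique anchor-isomorphisms for $S$. To verify definitional equivalence, apply $s$ to $t(\mathcal{A})=\langle A,b\rangle$: the unique isomorphism from $\mathcal{M}_S$ to $\langle A,b\rangle$ is forced by rigidity to be $h_{\mathcal{A}}$ itself, so $s$ transports $m_T$ along $h_{\mathcal{A}}\circ g = f_{\mathcal{A}}$, which recovers $a$ exactly since $f_{\mathcal{A}}$ is already an isomorphism from $\mathcal{M}_T$ to $\mathcal{A}$. Hence $s\circ t(\mathcal{A})=\mathcal{A}$ on the nose, and the symmetric calculation handles $t\circ s(\mathcal{B})=\mathcal{B}$.

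The only real obstacle is bookkeeping: one must check that the formula picking out the $HOD$-least bijection $g$, the formula isolating the unique isomorphism $f_{\mathcal{A}}$ from the anchor and $\mathcal{A}$, and the comprehension producing $b$ can be packaged into a single pair of terms $\langle t_d, t_s\rangle$ in $\mathcal{L}_{\in}(D,d)$ meeting Definition \ref{def:T-interpt}. This is routine: the anchors and $g$ live in the kernel and are therefore parameter-free definable in every $V(\mathcal{A})$; rigidity makes $f_{\mathcal{A}}$ uniquely definable from $\mathcal{A}$ and $\mathcal{M}_T$; and the transport is a plain set-theoretic comprehension. The argument generalizes painlessly to anchors carrying finitely many relations, functions, and constants of various arities by tupling.
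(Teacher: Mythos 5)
Your proposal is correct and follows essentially the same route as the paper's proof: fix the anchors, take the $HOD$-least bijection between their domains, use rigidity plus categoricity to get the unique (hence definable) isomorphism from the anchor to each structure, and transport the anchor relation along the composite while keeping the domain fixed. The only difference is that you carry out the verification that $s\circ t(\mathcal{A})=\mathcal{A}$ (via the observation that rigidity forces the anchor-isomorphism for $t(\mathcal{A})$ to be $h_{\mathcal{A}}$), which the paper leaves to the reader.
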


\begin{proof}
We show that there exist $t:T\leftrightarrow S:s$ such that:
\begin{enumerate}
\item $\mathcal{A}=s\circ t(\mathcal{A})$ for all $\mathcal{A}\in T$;
and
\item $\mathcal{B}=t\circ s(\mathcal{B})$ for all $\mathcal{B}\in S$. 
\end{enumerate}
First we let $\mathcal{M}=\langle M,m\rangle$ and $\mathcal{N}=\langle N,n\rangle$
be the anchors of $T$ and $S$ respectively. Then let $f$ be the
$HOD$-least bijection $f:M\to N$. First we define $t$ by starting
with some $\mathcal{A}=\langle A,a\rangle$ satisfying $T$. Then
we let $g:\mathcal{M}\cong\mathcal{A}$ and note that since it is
unique it is also definable. We let $t(\mathcal{A})=\langle B,b\rangle$
where $B=A$ and $b$ is such that for all $x_{1},...,x_{n}\in A^{n}$
\[
\langle x_{1},...,x_{n}\rangle\in b\ \Leftrightarrow\ \langle f\circ g^{-1}(x_{1}),...,f\circ g^{-1}(x_{n})\rangle\in n.
\]
A similar argument gives us $s(\mathcal{B})$ for $\mathcal{B}\in S$.
We leave to the reader to establish that $t$ and $s$ witness that
(1) and (2) are satisfied. 
\end{proof}
Thus, if we are interested in simple rigidly anchored categorical
theories, the framework we've described above will not make enough
distinctions to be practically useful. However, this collection of
theories has a somewhat technical definition, and so we might hope
that there are other ways of describing categorical theories of simple,
rigid structures that are not exposed to these problems. This is not
the case. 
\begin{prop}
($\mathcal{V}^{*}$) If $T$ is a categorical theory of a simple
rigid structure (up to isomorphism) then it is a simple rigidly anchored
categorical theory. 
\end{prop}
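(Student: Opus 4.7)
The plan is to construct the anchor by transporting any model of $T$ into the kernel. First I would note that since $T$ is categorical with a simple rigid structure as its model, both the arity $n$ of the relation (so that the structure component of any $\mathcal{A}\in T$ lies in $A^n$) and the cardinality $\kappa$ of the domain of any model are invariants of $T$ and hence definable from $T$. In particular, $\kappa$ is a cardinal of the kernel.

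Next, given any model $\mathcal{A}=\langle A,a\rangle\in T$ and any bijection $f:A\to\kappa$, I would transport the relation across $f$ to obtain $m=\{\langle f(x_1),\ldots,f(x_n)\rangle\ |\ \langle x_1,\ldots,x_n\rangle\in a\}\subseteq\kappa^n$, so that $\mathcal{M}=\langle\kappa,m\rangle$ is a simple structure in the kernel. By (the argument behind) Lemma \ref{lem:(Simple-atomization-lemma)}, the atomization of $\mathcal{M}$ is isomorphic to $\mathcal{A}$, and hence, by categoricity of $T$, the class $T$ coincides exactly with the structured sets isomorphic to $\mathcal{M}$. This reduces the proposition to exhibiting some such $\mathcal{M}$ that is \emph{definable}, not merely existent.

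The main obstacle is precisely this definability step, since the bijection $f$ used above is a non-canonical choice. The key observation is that the set $X=\{m\subseteq\kappa^n\ |\ \text{the atomization of }\langle\kappa,m\rangle\text{ satisfies }T\}$ is a nonempty set definable from $T$ alone, and all of its members yield isomorphic rigid structures. To single out a definable representative, I would exploit that $X$ lies in the kernel $V$, where every set admits a $\Sigma_n$-definable well-ordering relative to the parameter $\kappa$ inside a sufficiently large $V_\alpha$; taking the least $m\in X$ in such an internal well-ordering gives a simple structure definable from the parameter-free datum $T$. An alternative, more structurally satisfying route would be to use the rigidity directly: because each $\langle\kappa,m\rangle$ has trivial automorphism group, the isomorphism class has a Scott-type canonical invariant, and one can define $\mathcal{M}$ as the unique simple structure realizing that invariant in the kernel. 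Either way, once this selection is made, the displayed equivalence $\mathcal{A}\in T \Leftrightarrow \mathcal{A}\cong\mathcal{M}$ follows immediately from categoricity, completing the proof.
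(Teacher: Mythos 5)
Your setup is fine as far as it goes: the arity $n$ and the kernel cardinality $\kappa$ are indeed invariants of $T$, the set $X=\{m\subseteq\kappa^{n}\ \mid\ \text{the atomization of }\langle\kappa,m\rangle\text{ satisfies }T\}$ is nonempty and definable from $T$ alone, and the proposition does reduce to exhibiting a \emph{definable} representative of the isomorphism type. But both of your routes for that last step have genuine gaps. The first assumes that every set in the kernel carries a well-ordering definable from the ordinal parameter $\kappa$ inside some $V_{\alpha}$; that is simply the assertion $V=HOD$ in local form, and it is false in $ZFC$ generally. The paper proves this proposition \emph{without} $V=HOD$ (the neighbouring results flag explicitly which claims need that hypothesis, and the footnote immediately following this proposition observes that when $V\neq HOD$ there are isomorphism classes of rigid simple structures with no definable member at all). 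A definable nonempty set need not have a definable element, so ``take the least $m\in X$'' is not available. The second route is also gappy: a Scott-type invariant pins down an isomorphism type, not a particular structure, so ``the unique simple structure realizing that invariant'' does not exist; you would still have to build a canonical representative, which is exactly the work left undone. (And for uncountable structures, sharing an $L_{\infty\omega}$-type does not imply being conjugate by an automorphism, so an element-by-invariant construction is not guaranteed to separate the points of a rigid structure either.)

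The paper supplies precisely the missing idea: rather than \emph{selecting} a member of the definable family of isomorphic copies, it \emph{builds a new copy out of all of them}. Taking $X$ to be the set of all simple structures in the least $V_{\alpha}$ containing one that are isomorphic to the models of $T$, rigidity guarantees a \emph{unique} isomorphism $j:\mathcal{N}_{0}\cong\mathcal{N}_{1}$ between any two members of $X$; the functions $f$ on $X$ that are coherent under these unique isomorphisms, i.e.\ satisfy $j(f(\mathcal{N}_{0}))=f(\mathcal{N}_{1})$, form a set $M^{*}$ canonically in bijection with each domain, and the relation $m^{*}$ transfers along evaluation. This $\mathcal{M}^{*}$ is definable from the definable set $X$ even when no individual member of $X$ is definable. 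Your proposal uses rigidity only to note that the copies are pairwise isomorphic, not in this load-bearing way, and that is where the argument breaks.
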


\begin{proof}
Let $\alpha$ be least such that there is a simple structure in $\mathcal{M}\in V_{\alpha}$
(i.e., the kernel) such that $\mathcal{M}\cong\mathcal{A}$ for some
structured set $\mathcal{A}$ satisfying $T$. Our goal is to find
an anchor $\mathcal{M}^{*}$ for $T$. We start by letting $X$ be
the set of simple structures $\mathcal{N}$ in $V_{\alpha}$ that
are isomorphic to $\mathcal{M}$. We then define our anchor $\mathcal{M}^{*}=\langle M^{*},m^{*}\rangle$
by letting $M^{*}$ be the set of functions $f$ with domain $X$
such that for all $\mathcal{N}_{0},\mathcal{N}_{1}\in X$ and $j:\mathcal{N}_{0}\cong\mathcal{N}_{1}$,
$j(f(\mathcal{N}_{0}))=f(\mathcal{N}_{1})$. This makes sense since
$\mathcal{M}$ is rigid and thus, there is exactly one isomorphism
between any pair of simple structures in $X$. Finally, we let $m^{*}$
be such that for $f_{0},...,f_{n}\in M^{*}$
\[
\langle f_{0},...,f_{n}\rangle\in m^{*}\Leftrightarrow\langle f_{0}(\mathcal{M}),...,f_{n}(\mathcal{M})\rangle\in m.
\]
It should be clear that $\mathcal{M}^{*}$ is an anchor for $T$. 
\end{proof}
We note that our requirement that $T$ is a definable class plays
a crucial role in the proof above. For example, if $V\neq HOD$ there
is an isomorphism class of directed graphs that has no definable member.
Thus, the theory of those graph cannot be defined via an anchor.\footnote{To see this, use our assumption that $V=HOD$ to fix $x$ of minimal
rank such that $x\notin OD$. Then it is clear that $\langle trcl(\{x\}),\in\rangle$
is also not ordinal definable Moreover, no directed graph $\mathcal{G}$
with $\mathcal{G}\cong\langle trcl(\{x\}),\in\rangle$ can be ordinal
definable either otherwise we could define $\langle trcl(\{x\},\in\rangle$
by collapsing $\mathcal{G}$.} With regard to definitional equivalence, we might also wonder whether
the assumption that $V=HOD$ is required in Theorem \ref{thm:SimpRigCatBiInt}.
We see that it is. 
\begin{prop}
If $ZFC$ is consistent, then it's consistent in $\mathcal{V}^{*}$
that there are simple categorical theories of rigid structures with
the same cardinality, but they are not definitionally equivalent.
\end{prop}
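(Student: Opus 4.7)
The plan is to force with the Levy collapse $\mathrm{Col}(\omega, \omega_1^L)$ over $L$, producing a model $V = L[G]$ in which $\omega_1^L$ becomes countable (so $\omega$ and $\omega_1^L$ share the $V$-cardinality $\aleph_0$) while still $HOD^V = L$, by weak homogeneity of $\mathrm{Col}(\omega, \omega_1^L)$. Inside the $\mathcal{V}^*$ built over this $V$, I would take the simple rigid structures $\mathcal{M} = \langle \omega, \in \rangle$ and $\mathcal{N} = \langle \omega_1^L, \in \rangle$ as anchors of the simple rigidly anchored categorical theories $T$ and $S$. Both anchors are definable in $V$ without parameters and share $V$-cardinality $\aleph_0$, so they meet the hypotheses of the proposition.

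To show $T$ and $S$ are not definitionally equivalent I would argue by contradiction. Suppose $t : T \leftrightarrow S : s$ witness the equivalence; both preserve domains, by the earlier characterization of definitional equivalence. Given any $\mathcal{A} = \langle A, a \rangle \in T$, rigidity of the anchors delivers unique isomorphisms $f_{\mathcal{A}} : \omega \to A$ from $\mathcal{M}$ onto $\mathcal{A}$ and $g_{\mathcal{A}} : \omega_1^L \to A$ from $\mathcal{N}$ onto $t(\mathcal{A})$. Setting $\phi_{\mathcal{A}} := g_{\mathcal{A}}^{-1} \circ f_{\mathcal{A}}$ gives a bijection $\omega \to \omega_1^L$. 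The pivotal claim is that $\phi_{\mathcal{A}}$ is independent of the choice of $\mathcal{A}$: for any $\mathcal{A}'$ and any bijection $\pi : A \to A'$ witnessing $\mathcal{A} \cong \mathcal{A}'$, the fact that interpretations preserve isomorphisms forces $\pi$ to also witness $t(\mathcal{A}) \cong t(\mathcal{A}')$, whence by uniqueness $f_{\mathcal{A}'} = \pi \circ f_{\mathcal{A}}$ and $g_{\mathcal{A}'} = \pi \circ g_{\mathcal{A}}$, and the $\pi$'s cancel to give $\phi_{\mathcal{A}'} = \phi_{\mathcal{A}}$.

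Granted this, there is a canonical bijection $\phi : \omega \to \omega_1^L$ definable in $\mathcal{V}^*$ from the kernel parameters $\mathcal{M}, \mathcal{N}$ together with the fixed formulas $\tau_d, \tau_s$ specifying $t$. Since $\mathcal{V}^*$ and satisfaction inside $V(\mathcal{A})$ for any fixed $\mathcal{L}_{\in}(D,d)$-formula are uniformly definable in the ambient $V$, $\phi$ is likewise definable in $V$ from $\mathcal{M}, \mathcal{N}$; these being ordinal-definable, $\phi \in HOD^V = L$. But in $L$ the set $\omega_1^L$ is uncountable, so no bijection $\omega \to \omega_1^L$ can lie in $L$ --- a contradiction.

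The main obstacle is justifying the naturality claim $\phi_{\mathcal{A}'} = \phi_{\mathcal{A}}$. Its justification rests on the observation --- visible from continuing the lift of Definition \ref{def:lift} past the field of $\mathcal{A}$ --- that any bijection $\pi : A \to A'$ between quasi-domains extends to an isomorphism $\pi^{*} : V(\mathcal{A}) \cong V(\mathcal{A}')$ fixing the kernel pointwise, which then transports $\tau_s^{V(\mathcal{A})}$ to $\tau_s^{V(\mathcal{A}')}$, so $\pi$ genuinely witnesses $t(\mathcal{A}) \cong t(\mathcal{A}')$. A secondary bookkeeping item is the identity $HOD^{L[G]} = L$, which follows because permutations of $\omega_1^L$ induce poset automorphisms of $\mathrm{Col}(\omega, \omega_1^L)$ establishing its weak homogeneity, so no set ordinal-definable in $L[G]$ lies outside the ground model $L$.
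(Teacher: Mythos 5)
Your proposal is correct, but it takes a genuinely different route from the paper's. The paper adds a single Cohen real over an arbitrary model of $ZFC$, anchors $T$ and $S$ on $\langle2^{\aleph_{0}},\in\rangle$ and $\langle V_{\omega+1},\in\rangle$, and extracts from a putative definitional equivalence a definable bijection $V_{\omega+1}\to2^{\aleph_{0}}$, which is then killed by the name-automorphism argument of Proposition \ref{prop:SubWO=0000ACbi}. You instead collapse $\omega_{1}^{L}$ over $L$, anchor on $\langle\omega,\in\rangle$ and $\langle\omega_{1}^{L},\in\rangle$, and kill the resulting bijection $\omega\to\omega_{1}^{L}$ by combining $HOD^{L[G]}=L$ (via weak homogeneity of the collapse) with the uncountability of $\omega_{1}^{L}$ in $L$; both mechanisms are sound and both get by on $\mathrm{Con}(ZFC)$. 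What the paper's version buys is economy: it reuses a proposition already proved and works over an arbitrary ground model rather than specifically $L$. What yours buys is a more careful treatment of a point the paper's own proof leaves implicit, namely why the extracted bijection is ordinal-definable rather than merely definable from the arbitrary structured set $\mathcal{A}$ one happens to start with; your naturality claim $\phi_{\mathcal{A}'}=\phi_{\mathcal{A}}$, justified by lifting $\pi$ to $\pi^{*}:V(\mathcal{A})\cong V(\mathcal{A}')$ and invoking domain preservation so that the \emph{same} bijection $\pi$ witnesses $t(\mathcal{A})\cong t(\mathcal{A}')$, settles exactly this. (An equally acceptable shortcut would have been to take $\mathcal{A}$ itself definable by atomizing the anchor, but your argument is cleaner and closes a gap rather than sidestepping it.)
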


\begin{proof}
Assume toward a contradiction that the claim is false. Let us work
in $\mathcal{V}^{*}$ of $V[G]$ where $G$ is $Add(\omega,1)$-generic
over $V$. Let $T$ be the theory of structured sets $\mathcal{A}$
that are isomorphic to $\langle2^{\aleph_{0}},\in\rangle$; and let
$S$ be the theory of structured sets $\mathcal{B}$ that are isomorphic
to $\langle V_{\omega+1},\in\rangle$. These are obviously simple
categorical theories of rigid structures with the same cardinality.
Thus, our assumption tells us that $T$ and $S$ are definitionally
equivalent. Now let $\mathcal{A}$ satisfy $T$ and $\mathcal{B}=t(\mathcal{A})\in S$.
There is a unique -- and thus, definable -- isomorphism $f:\mathcal{A}\cong\langle2^{\aleph_{0}},\in\rangle$
and similarly, a unique isomorphism $g:\mathcal{B}\cong\langle V_{\omega+1},\in\rangle$.Thus,
$f\circ g^{-1}:V_{\omega+1}\to2^{\aleph_{0}}$ is a definable bijection.
The proof of Proposition \ref{prop:SubWO=0000ACbi} shows that this
is impossible. 
\end{proof}

\subsubsection{Fully generality\label{subsec:Fully-generality}}

How do we go beyond simple structures? We briefly discuss a hurdle
and then a simple way to get around it. In a nutshell, the triviality
results still apply. Here is the problem. Given a simple structured
sets $\mathcal{A}=\langle A,a\rangle$ where $a\subseteq A^{n}$ for
some $n\in\omega$, it is easy to find a counterpart in the kernel.
We just need some $X$ in the kernel with the same cardinality as
$A$. Given a bijection witnessing this, it is then easy to find a
counterpart $x$ to $a$. This makes it easy to find and use anchors
for simple theories. Indeed, one can easily extend this to more complex
structured sets like topologies. However, when considering a structured
set $\mathcal{A}=\langle A,a\rangle$ in general, it is not so obvious
that it will have a natural counterpart in the kernel. The reason
for this is that the transitive closure of $a$ does not have to be
a subset of the domain $A$ of atoms; it may include elements from
the kernel. If we try to pull such an $\mathcal{A}$ back into the
kernel in the way we did above, the distinction between atoms and
kernel elements will be lost. This causes problems. 

Fortunately, there is a relatively easy way to address this, that
we'll merely sketch.\footnote{The details get fussy quickly and I think the main points of limitation
have been illustrated relatively clearly above. Nonetheless, it seems
important to gesture at how the anchoring method can be generalized.} The idea is simple but, conceptually speaking, it probably takes
a moment or two to digest. Our goal is find natural counterparts to
structured sets from $\mathcal{V}^{*}$ within the kernel. Recall
that the kernel is $V(\emptyset)$ as constructed with in $\mathcal{V}^{*}$.
But from the perspective of our background set theory, the universe
itself is isomorphic to the kernel. Thus, within the kernel we can
define another version of $\mathcal{V}^{*}$. So working in $\mathcal{V}^{*}$,
we let $(\mathcal{V}^{*})^{V(\emptyset)}$ be the big playground,
$\mathcal{V}^{*}$, as defined in there kernel, $V(\emptyset)$. This
means that for any structured set $\mathcal{A}$, there will be a
natural counterpart $\mathcal{A}^{\dagger}$ to $\mathcal{A}$ located
in $(\mathcal{V}^{*})^{V(\emptyset)}$. Moreover, there is a natural
notion of isomorphism between $\mathcal{A}$ and $\mathcal{A}^{\dagger}$
that can be defined in $\mathcal{V}^{*}$. This allows us to use the
structured sets occurring in $(\mathcal{V}^{*})^{V(\emptyset)}$ as
anchors for theories. We may then generalize the results above to
show that: every pair of categorical theories of rigid structures
are bi-interpretable; and if $V=HOD$, then every pair of categorical
theories of rigid structures whose domains have the same cardinality
are definitionally equivalent.

Putting the observations of this section together, we see that there
are significant hurdles to providing a satisfying account of the interdefinability
of theories when the structured sets satisfying them are rigid. In
particular, the $\mathcal{V}^{*}$-framework all but trivially identifies
categorical theories of rigid structures. 

\subsubsection{What can we do?\label{subsec:What-can-we-do?}}

The first thing we should do is concede that this is a genuine limitation
of the $\mathcal{V}^{*}$-framework. While we've done very well with
theories with no rigid structures and quite well with theories having
many rigid structures, we hit a wall when it came to categorical theories
of a rigid structure. Of course, this is just how the system works.
The $\mathcal{V}^{*}$-framework is intended to give us access to
all of our background mathematics when we come to define one mathematical
structure using another. Informally speaking, there just aren't that
many ways a particular rigid structure can be manifested, so it should
not be too surprising that our generous powers of definition allow
us to isolate them. The proofs above attest to how easily and naturally
this can be done. Nonetheless, there is still something counterintuitive
about these results. It seems -- in some sense -- wrong to say that
the theories of $\omega$ and $\aleph_{1}$ are interdefinable. But
just what this sense is, is invisible to our current analysis. 

What should we do? One option is to just bite the bullet. The framework
described above provides a powerful tool for understanding equivalence
in many mathematical instances. But it doesn't cover all of them.
So we have an instrument that works well in many cases. Moreover,
the $\mathcal{V}^{*}$-framework can be understood as pinpointing
a particular sense or meaning of ``interdefinability'' that can
be distinguished from that being used in cases that do not fit within
its confines. But what about these other senses or meanings of interdefinability?
This is work for a future date, however, I think there is value in
sketching one way of generalizing the $\mathcal{V}^{*}$-framework
to deliver more intuitive results in these cases. Besides shedding
a little more light on interdefinability, it will also help us see
that our framework sits at the top of a kind of hierarchy of notions
of definability used in mathematics. 

What should we change? We suggested above that the counterintuitive
results above were caused by our overpowered notion of definability.
So why don't we weaken this and how do we weaken it? Note that as
we weaken our powers of definition, we generally strengthen our equivalence
relations. This is because weaker definability powers generally tie
fewer structures together and thus, make fewer theories equivalent
to each other, meaning that we have a stronger equivalence relation.
So let us consider which interdefinability relations are stronger
than definitional equivalence and bi-interpretability in the $\mathcal{V}^{*}$-framework.
Essentially all of them are. In the context of first order logic and
the standard theory of relative interpretation, definitional equivalence
and bi-interpretability are stronger than their $\mathcal{V}^{*}$counterparts
\citep{Visser2006}. So is Morita equivalence \citep{bHMoritaEquiv}.
So are the second order logic counterparts of definitional equivalence
and bi-interpretability. So are type-theoretic relations considered
by Hudetz \citeyearpar{Hudetz2017,HudetzDefEquiv}.\footnote{Strictly speaking, some care is required around whether the functors
used there are definable, however, the examples considered in those
papers certainly fit into the $\mathcal{V}^{*}$-framework as we've
seen above.} In a nutshell, if two theories are interdefinable according to some
reasonable criteria, then they will be definitionally equivalent or
bi-interpretable in the $\mathcal{V}^{*}$-framework. The landscape
of weaker definability relations is vast and we cannot hope to tame
it here at the end of this long paper. Instead, I propose to consider
two particularly interesting weakenings that naturally generalize
the $\mathcal{V}^{*}$-framework and give us some helpful perspective.
We'll consider something close to the weakest useful notion of definability
and a natural halfway house between it and the full powers of the
framework above. These will be based on ideas from \emph{computability
}and \emph{constructibility} respectively.

Computability theory is one of the most developed parts of mathematical
logic, however, it generally focuses on computation over the natural
numbers (or similar structures). In the spirit of the discussion above,
we'd like to offer an analysis of what it means to say that one structured
set $\mathcal{A}$ can be \emph{computed}, rather than defined, from
another. Let's make things a little easier for ourselves and only
consider structures $\mathcal{A}=\langle A,a\rangle$ that are simple
in the sense that $a\subseteq A^{n}$ for some $n\in\omega$.\footnote{The story below can be generalized to arbitrary structured sets, but
I'm not sure it lines up well with intuitions about computability.
This requires further work.} There is no thought that $\mathcal{A}$ resembles the natural numbers
in any salient way, so a significant generalization of ordinary computability
theory is required. Fortunately, there is a canonical approach that
fits well very naturally into the discussion above. Our remarks below
will be based on \citep{Barwise},\footnote{In particular, see Section II.2 of \citep{Barwise} and for a more
thorough treatment of computability in this framework see \citep{ershov1996definability}.} however, we should note that in the early 1960s this kind of generalized
theory of computation enjoyed interest in a number of camps \citep{MontRecThAsModTh,Mosch1969aAbsComp,Fraisse,Lacombe1,KREISEL2014190}.
Moreover, it was discovered -- somewhat surprisingly -- that each
of these generalized theories were equivalent modulo some minor specializations
\citep{Gordon1970,Lacombe2}. Perhaps -- as with the standard version
of the Church-Turing thesis -- this convergence speaks to the naturalness
of these accounts. We'll leave that for the reader to judge. 

The basic idea is then a variation on the $\mathcal{V}^{*}$ framework.
In particular, we will restrict our attention to interpretations that
are computable in the sense we are about to describe. First, we define
a space over a structured set $\mathcal{A}=\langle A,a\rangle$ in
which we might understand computation as taking place. Working within
the $\mathcal{V}^{*}$-framework, we let 
\begin{align*}
\mathbb{HF}_{0}(A) & =A\\
\mathbb{HF}_{n+1}(A) & =\{X\subseteq\mathbb{HF}_{n}(A)\ |\ |A|<\omega\}\cup\mathbb{HF}_{n}(A)
\end{align*}
and we let $\mathbb{HF}(A)=\bigcup_{n\in\omega}\mathbb{HF}_{n}(A)$,
which we call the \emph{hereditarily finite sets above $A$}. Note
that if $A$ is finite, then $\mathbb{HF}(A)=V_{\omega}(A)$, but
the converse does not hold. Let us write $\mathbb{HF}(\mathcal{A})$
to denote $\langle\mathbb{HF}(A),\in,A,a\rangle$. Thus, we see that
$\mathbb{HF}(\mathcal{A})$ is, so to speak, a smaller version of
$\mathcal{V}^{*}(\mathcal{A})$. Recalling the L�vy hierarchy,\footnote{See Chapter 13 of \citep{JechST}.}
we say that $X\subseteq A^{m}$ for some $m\in\omega$ is \emph{computably
enumerable over $\mathcal{A}$ }if there is a $\Sigma_{1}$ formula
of $\mathcal{L}_{\in}(D,d)$ that defines $X$ over $\mathbb{HF}(\mathcal{A})$.
We say that $X$ is \emph{computable over $\mathcal{A}$} if there
is a $\Sigma_{1}$ formula and a $\Pi_{1}$ formula that both define
$X$ over $\mathbb{HF}(\mathcal{A})$. To illustrate the link with
ordinary computability theory, let $\mathcal{N}$ be a standard model
of arithmetic.\footnote{By this we mean that $\mathcal{N}$ is a structured set that is isomorphic
to the standard model of arithmetic as defined in the kernel. Thus,
$\mathcal{N}$ will be of the form $\langle N,\dot{0},\dot{s},\dot{+},\dot{\times}\rangle$
where there is bijection $f:\omega\to N$ that preserves the structural
elements in the obvious way.} Then if $X$ is computable over $\mathcal{N}$ iff $X$ is computable
in the ordinary sense.\footnote{See Theorem II.2.5 in \citep{Barwise}. Note that $N$ is not a set
in $\mathbb{HF}(\mathcal{N})$ but is computable over $\mathbb{HF}(\mathcal{N})$.
Also see Chapter IV in \citep{kunenFoM} for a detailed treatment
of computability theory in the context of $\mathbb{HF}$ rather than
$\omega$.}

Using this, we may then refine our definition of a $T$-interpretation
from Definition \ref{def:T-interpt}. In particular, we shall say
that $t$ is a \emph{computable $T$-interpretation} if $t$ is a
$T$-interpretation where the $\mathcal{L}_{\in}(D,d)$-formulae $\tau_{d}$
and $\tau_{s}$ that compose $t$ are $\Sigma_{1}$ relativized to
$\mathbb{HF}(D)$ and there are corresponding formulae which are $\Pi_{1}$
over $\mathbb{HF}(D)$ that are equivalent to $\tau_{d}$ and $\tau_{s}$
over quasi-structured sets $\mathcal{A}$ that satisfy $T$. The rest
of the theory can then be modified accordingly by using computable
interpretations rather than their more powerful cousins. Thus, we
end up with a \emph{computable definitional equivalence} and \emph{computable
bi-interpretability}. When a pair of theory are computably definitionally
equivalent, we are saying that there is a uniform way of computing
structures from one theory back and forth from the other that returns
us to exactly where we started. This seems like a natural equivalence
relation to investigate. Moreover, it is one of the strongest equivalence
relations that mathematicians would ordinarily consider. It is obviously
a much stronger equivalence relation than the full definitional equivalence
in the $\mathcal{V}^{*}$-framework, but it is also much stronger
than ordinary interpretation. To see this, let's consider a somewhat
artificial, but very simple example. 

Let $Arith$ be the $\mathcal{L}_{\in}(D,d)$ the theory of structured
sets of the form $\mathcal{N}=\langle N,\dot{0},\dot{s},\dot{+},\dot{\times}\rangle$
where $\mathcal{N}$ is isomorphic to the standard model of arithmetic
in the kernel. Then recall that in computability theory, we often
let $K$ be the set of those $e\in\omega$ such that that $e^{th}$
partial computable function $\Phi_{e}$ halts when given $e$ as its
input.\footnote{See, for example, the definition on page 62 of \citep{RogRecT}.}
Let $Arith+K$ be the $\mathcal{L}_{\in}(D,d)$ be theory of structured
sets of the form $\mathcal{K}=\langle K,\dot{0},\dot{s},\dot{+},\dot{\times},\dot{K}\rangle$
where $\mathcal{K}$ is isomorphic to the standard model expanded
with $K$. Next, we recall that $K$ is computably enumerable but
not computable.\footnote{For a proof, see Theorem VI in Section 5.3 of \citep{RogRecT}.}
Thus, given a structured set $\mathcal{N}$ satisfying $Arith$, the
appropriate version of $K$ for $\mathcal{N}$ can be given a $\Sigma_{1}$
but no $\Pi_{1}$ definition over $\mathbb{HF}(\mathcal{N})$. This
means that we can define a model of $\mathcal{K}$ in any model of
$\mathcal{N}$, which can then be used to show that $Arith$ and $Arith+K$
are definitionally equivalent in the standard sense used in relative
interpretability \citep{Visser2006}. However, no model of $\mathcal{N}$
can compute a suitable version of $K$, so $Arith$ and $Arith+K$
are not computably definitionally equivalent.

For our halfway house between the computable and full versions of
definitional equivalence and bi-interpretability, we look to G�del's
notion of constructibility \citep{GodelConCH}. We don't pretend that
this is the most salient place in this vast space, but it does seem
like a good place to stake out a milestone. Moreover, it fits very
naturally into the $\mathcal{V}^{*}$-framework. We start by defining
a framework to house a structured set $\mathcal{A}$ that will be
somewhere in between $\mathbb{HF}(\mathcal{A})$ and $\mathcal{V}^{*}(\mathcal{A})$.
For this purpose, we recall G�del's $\mathcal{D}$ function which
takes a set $X$ are returns the closure of $X$ under a set of operations
($\mathcal{F}_{1}$-$\mathcal{F}_{12}$) that are intended to simulate
the subsets of $X$ that are simply constructible from $X$.\footnote{It is possible to just define our target structure using definability
instead of the G�del operations and the reader will not lose much
in thinking about it like this. However, it will make comparison to
the short $\mathbb{HF}$structures described above more difficult,
so we opt for a hierarchy that grows a little slower. See the beginning
of Section II.5 in \citep{Barwise} for a discussion of this issue;
and see see Section II.6 for a proper description of the operations
mentioned above.} Working in $\mathcal{V}^{*}$, for a structured set $\mathcal{A}=\langle A,a\rangle$,
we then let
\begin{align*}
L_{0}(A) & =A\\
L_{\alpha+1}(A) & =\mathcal{D}(L_{\alpha}(A))\cup L_{\alpha}(A)\\
L_{\lambda}(A) & =\bigcup_{\alpha<\lambda}L_{\alpha}(A)\text{ for limit ordinals }\lambda
\end{align*}
and we let $L(A)=\bigcup_{\alpha\in Ord}L_{\alpha}(A)$. We then let
$L(\mathcal{A})=\langle L(A),\in,A,\in\rangle$ and call this the
\emph{constructible hierarchy over $\mathcal{A}$}. We then observe
that $L_{\omega}(\mathcal{A})=\mathbb{HF}(\mathcal{A})$ and $\mathbb{HF}(\mathcal{A})\subsetneq L(\mathcal{A})\subseteq\mathcal{V}^{*}(\mathcal{A})$,
so we have a halfway house in terms of definability power. Moreover,
the final $\subseteq$ is also strict if we assume that $V\neq L$.
We then revise our notion of $T$-interpretation in much the same
way as we did to obtain computable $T$-interpretations. In particular,
we shall say that $t$ is a \emph{constructible $T$-interpretation}
if $t$ is a $T$-interpretation where the $\mathcal{L}_{\in}(D,d)$-formula
$\tau_{d}$ and $\tau_{s}$ that compose $t$ are $\Sigma_{1}$ relativized
to $L(D)$ and there are corresponding formulae which are $\Pi_{1}$
over $L(D)$ that are equivalent to $\tau_{d}$ and $\tau_{s}$ over
quasi-structured sets $\mathcal{A}$ that satisfy $T$. As with computable
interpretations, the rest of the theory can be straightforwardly modified
to give us \emph{constructible definitional equivalence} and \emph{constructible
bi-interpretability}. 

Let us close by giving a simple (to state) but somewhat contrived
example to establish that constructible definitional equivalence is
in between computable definitional equivalence and full definitional
equivalence in $\mathcal{V}^{*}$. First, we observe that $Arith$
and $Arith+K$ are constructibly definitionally equivalent and we
already know they are not computably definitionally equivalent. To
break the other two apart, we need to assume that $V\neq L$, so let's
suppose there is a measurable cardinal and recall that this implies
the existence of $0^{\#}$ which is a definable set of natural numbers
that is not a member of $L$.\footnote{See Chapter 18 of \citep{JechST} for more details.}
Analogously to $Arith+K$, let $Arith+0^{\#}$ be the theory of structured
sets isomorphic to the standard model of arithmetic expanded by $K$.
Then we see that $Arith$ and $Arith+0^{\#}$ are definitionally equivalent
in the full $\mathcal{V}^{*}$-framework, but they are not constructibly
definitionally equivalent, since $0^{\#}\notin L(N)$ for any $\mathcal{N}$
satisfying $Arith$.

This is just a brief glimpse of the zoo that lies below the $\mathcal{V}^{*}$-framework.
While it's perhaps disappointing that the $\mathcal{V}^{*}$-framework
handles categorical theories of rigid structures poorly, it's encouraging
to see that the framework can naturally revised in ways that avoid
the problems of Section \ref{subsec:Fully-generality}. This seems
like a good place for future investigation. But even at this early
juncture, I think the $\mathcal{V}^{*}$-framework offers us a new
vantage point from which the full landscape of definability can be
comprehended. 

\section{Conclusion}

So that's the $\mathcal{V}^{*}$-framework. We started with a goal
of analyzing what we mean when we say that two theories consist of
interdefinable structures. We observed that first order logic and
the ordinary theory of relative interpretation face hurdles with theories
in physics and, in particular, those involving topology. While category
theoretic approaches can go some way to answering the question of
when two such theories are equivalent, the simple story about definability
and translation falls away and, with it, the easy argument about the
philosophical significance of such equivalence results. The $\mathcal{V}^{*}$-framework
brings definability back to the conversation by proving a model of
what it means for one structure to be definable from another when
we have the full resources of mathematics behind us. The proposed
framework adopts a structuralist attitude that is brought to life
using set theory with atoms. With it, we are able to take standard
equivalence results straight out of the book and into the $\mathcal{V}^{*}$-framework.
Moreover and more importantly, we are able to establish non-trivial
inequivalences with theories that often line up with results in category
theory but retain a story built upon translation and definability.
I think this paper, while long, is just a first step into a world
of new questions in logic and mathematics. I look forward to exploring
it further myself and seeing what other might do with it.

\end{document}